\begin{document}



\author[Dos Santos, H.S.]{Helen Samara Dos Santos}
\address{Department of Mathematics and Statistics,
	Memorial University of Newfoundland,
St. John's, NL, A1C5S7, Canada}
\email{helensds@mun.ca}

\author[Hornhardt, C.]{Caio De Naday Hornhardt}
\address{Department of Mathematics and Statistics,
	Memorial University of Newfoundland,
St. John's, NL, A1C5S7, Canada}
\email{cdnh22@mun.ca}

\author[Kochetov, M.]{Mikhail Kochetov}
\address{Department of Mathematics and Statistics,
	Memorial University of Newfoundland,
St. John's, NL, A1C5S7, Canada}

\email{mikhail@mun.ca}


\thanks{The authors acknowledge financial support by Discovery Grant 341792-2013 of the Natural Sciences and Engineering Research Council (NSERC) of Canada and by Memorial University of Newfoundland.}


\date{}

\title[Group gradings on the superalgebras $M(m,n)$, $A(m,n)$ and $P(n)$]{Group gradings on the superalgebras\\ $M(m,n)$, $A(m,n)$ and $P(n)$}


\subjclass[2010]{Primary 17B70; Secondary 16W50, 16W55, 17A70}
\keywords{Graded algebra, associative superalgebra, simple Lie superalgebra, classical Lie superalgebra, classification}


\begin{abstract}
	We classify gradings by arbitrary abelian groups on the classical simple Lie superalgebras $P(n)$, $n \geq 2$, and on the simple associative superalgebras $M(m,n)$, $m, n \geq 1$, over an algebraically closed field: fine gradings up to equivalence and $G$-gradings, for a fixed group $G$, up to isomorphism.
	As a corollary, we also classify up to isomorphism the $G$-gradings on the classical Lie superalgebra $A(m,n)$ that are induced from $G$-gradings on $M(m+1,n+1)$. In the case of Lie superalgebras, the characteristic is assumed to be $0$.
\end{abstract}


\maketitle

\section{Introduction}

In the past two decades, gradings on Lie algebras by arbitrary abelian groups have been extensively studied. For finite-dimensional simple Lie algebras over an algebraically closed field $\FF$, the classification of fine gradings up to equivalence has recently been completed (assuming $\Char \FF = 0$) by efforts of many authors --- see the monograph \cite[Chapters 3--6]{livromicha} and the references therein, and also \cite{YuExc} and \cite{E14}. For a fixed abelian group $G$, the classification of $G$-gradings up to isomorphism is also known (assuming $\Char \FF \neq 2$), except for types $E_6$, $E_7$ and $E_8$ --- see \cite{livromicha} and \cite{EK_d4}.

This paper is devoted to gradings on finite-dimensional simple Lie superalgebras. Over an algebraically closed field of characteristic $0$, such superalgebras were classified by V.~G.~Kac in \cite{kacZ,artigokac} (see also \cite{livrosuperalgebra}). In \cite{kacZ}, there is also a classification of $\ZZ$-gradings on these superalgebras. More recently, gradings by arbitrary abelian groups have been considered. Fine gradings on the exceptional simple Lie superalgebras, namely, $D(2,1;\alpha)$, $G(3)$ and $F(4)$, were classified in \cite{artigoelduque} and all gradings on the series $Q(n)$, $n\geq 2$, were classified in \cite{paper-Qn}. A description of gradings on matrix superalgebras, here denoted by $M(m,n)$ (see Section \ref{sec:Mmn}), was given in \cite{BS}, but the isomorphism problem was left open and fine gradings were not considered.

The initial goal of this work was to classify abelian group gradings on the series $P(n)$, $n\geq 2$, and thereby complete the classification of gradings on the so-called ``strange Lie superalgebras''. Our approach led us to the study of gradings on the associative superalgebras $M(m,n)$ and the closely related Lie superalgebras $A(m,n)$.

Throughout this work, the canonical $\ZZ_2$-grading of a superalgebra will be denoted by superscripts, reserving subscripts for the components of other gradings. Thus, a $G$-grading on a superalgebra $A = A\even \oplus A\odd$ is a vector space decomposition $\Gamma:\,A = \bigoplus_{g \in G} A_g$ such that $A_g A_h\subseteq A_{gh}$, for all $g,h\in G$, and each $A_g$ is compatible with the superalgebra structure, i.e., $A_g=A_g^\bz \oplus A_g^\bo$. Note that $G$-gradings on a superalgebra can be seen as $G\times \ZZ_2$-gradings on the underlying algebra. For the superalgebras under consideration, namely, $M(m,n)$, $A(m,n)$ and $P(n)$, the canonical $\ZZ_2$-grading can be refined to a canonical $\ZZ$-grading, whose components will be denoted by superscripts $-1, 0, 1$.

Only gradings by abelian groups are discussed in this work, which is no loss of generality in the case of simple Lie superalgebras, because the support always generates an abelian group.
All the (super)algebras and vector (super)spaces are assumed to be finite-dimensional over a fixed algebraically closed field $\FF$. 
When dealing with the Lie superalgebras $A(m,n)$ and $P(n)$, we will also assume $\Char \FF = 0$.

The paper is structured as follows. Sections \ref{sec:generalities} and \ref{sec:gradings-on-matrix-algebras} have no original results. In the former, we introduce all basic definitions and a few general results for future reference, and the latter is a review of the classification of gradings on matrix algebras closely following \cite[Chapter 2]{livromicha}, with a slight change in notation. 

Section \ref{sec:Mmn} is devoted to the associative superalgebras $M(m,n)$, which
have two kinds of gradings: the \emph{even gradings} are compatible with the canonical $\ZZ$-grading and the \emph{odd gradings} are not. 
(The latter can occur only if $m=n$.) 
The classification results for even gradings are Theorems \ref{thm:even-assc-iso} ($G$-gradings up to isomorphism) 
and \ref{thm:class-fine-even} (fine gradings up to equivalence). 
We present two descriptions of odd gradings: one as $G\times \ZZ_2$-gradings on the underlying matrix algebra (see Subsection \ref{ssec:grds-on-superalgebras}) 
and the other purely in terms of the group $G$ (see Subsection \ref{ssec:second-odd}). We classify odd gradings in Theorems \ref{thm:first-odd-iso} and \ref{thm:2nd-odd-iso} ($G$-gradings up to isomorphism) and in Theorem \ref{thm:class-fine-odd} (fine gradings up to equivalence).

In Section \ref{sec:Amn}, we consider gradings on the Lie superalgebras $A(m,n)$, but only those that are induced from $M(m+1, n+1)$ (see Definition \ref{def:Type-I}). We classify them up to isomorphism in Theorem \ref{thm:even-Lie-iso} (even gradings) and in Theorem \ref{thm:first-odd-Lie-iso} and Corollary \ref{cor:2nd-odd-Lie-iso} (odd gradings).

In Section \ref{sec:Pn}, we classify gradings on the Lie superalgebras $P(n)$: see Theorem \ref{thm:Pn-iso} for $G$-gradings up to isomorphism 
and Theorem \ref{thm:class-fine-Pn} for fine gradings up to equivalence.

\section{Generalities on gradings}\label{sec:generalities}

The purpose of this section is to fix notation and recall definitions concerning graded algebras and graded modules. 

\subsection{Gradings on vector spaces and (bi)modules}\label{subsec:graded-bimodules}

Let $G$ be a group. By a \emph{$G$-grading} on a vector space $V$ we mean simply a vector space decomposition $\Gamma:\,V = \bigoplus_{g \in G} V_g$ where the summands are labeled by elements of $G$. If $\Gamma$ is fixed, $V$ is referred to as a {\em $G$-graded vector space}. A subspace $W \subseteq V$ is said to be \emph{graded} if $W = \bigoplus_{g \in G} (W \cap V_g)$. We will refer to $\ZZ_2$-graded vector spaces as \emph{superspaces} and their graded subspaces as \emph{subsuperspaces}.

An element $v$ in a graded vector space $V = \bigoplus_{g \in G} V_g$ is said to be \emph{homogeneous} if $v\in V_g$ for some $g\in G$. 
If $0\ne v\in V_g$, we will say that $g$ is the \emph{degree} of $v$ and write $\deg v = g$. 
In reference to the canonical $\ZZ_2$-grading of a superspace, we will instead speak of the \emph{parity} of $v$ and write $|v| = g$.
Every time we write $\deg v$ or $|v|$, it should be understood that $v$ is a nonzero homogeneous element.

\begin{defi}
	Given two $G$-graded vector spaces, $V=\bigoplus_{g\in G} V_g$ and $W=\bigoplus_{g\in G} W_g$, we define their tensor product to be the vector space $V\otimes W$
	together with the $G$-grading given by $(V \otimes W)_g = \bigoplus_{ab=g} V_{a} \otimes W_{b}$.
\end{defi}

The concept of grading on a vector space is connected to gradings on algebras by means of the following:

\begin{defi}
	If $V=\bigoplus_{g\in G} V_{g}$ and $W=\bigoplus_{g\in G} W_{g}$ are two graded vector spaces and $T: V\rightarrow W$ is a linear map, we say that $T$ is \emph{homogeneous of degree $t$}, for some $t\in G$, if $T(V_g)\subseteq W_{tg}$ for all $g\in G$.
\end{defi}

If $S: U\rightarrow V$ and $T: V\rightarrow W$ are homogeneous linear maps of degrees $s$ and $t$, respectively, 
then the composition $T\circ S$ is homogeneous of degree $ts$.
We define the {\em space of graded linear transformations} from $V$ to $W$ to be:
\[ \Hom^{\text{gr}} (V,W) = \bigoplus_{g\in G} \Hom (V,W)_{g}\]
%
where $\Hom (V,W)_{g}$ denotes the set of all linear maps from $V$ to $W$ that are homogeneous of degree $g$. 
If we assume $V$ to be finite-dimensional then we have $\Hom(V,W)=\Hom^{\gr}(V,W)$ and, in particular, $\End (V) = \bigoplus_{g\in G} \End (V)_g$ is a graded algebra.
We also note that $V$ becomes a graded module over $\End(V)$ in the following sense:

\begin{defi}
	Let $A$ be a $G$-graded algebra (associative or Lie) and let $V$ be a (left) module over $A$ that is also a $G$-graded vector space. We say that $V$ is a \emph{graded $A$-module} if $A_g \cdot V_h \subseteq V_{gh}$, for all $g$,$h\in G$. The concept of $G$-\emph{graded bimodule} is defined similarly.
\end{defi}

If we have a $G$-grading on a Lie superalgebra $L=L\even \oplus L\odd$ then, in particular, we have a grading on the Lie algebra $L\even$ and a grading on the space $L\odd$ that makes it a graded $L\even$-module. If we have a $G$-grading on an associative superalgebra $C=C\even \oplus C\odd$, then $C\odd$ becomes a graded bimodule over $C\even$.

If $ \Gamma$ is a $G$-grading on a vector space $V$ and $g\in G$, we denote by $\Gamma^{[g]} $ the grading given by relabeling the component 
$V_h$ as $V_{hg}$, for all $h \in G$. This is called the \emph{(right) shift of the grading $\Gamma$ by $g$}. 
We denote the graded space $(V, \,  \Gamma^{[g]})$ by $V^{[g]}$.

From now on, we assume that $G$ is abelian.
If $V$ is a graded module over a graded algebra (or a graded bimodule over a pair of graded algebras), then $V^{[g]}$ is also a graded (bi)module. 
We will make use of the following partial converse (see e.g. \cite[Proposition 3.5]{paper-Qn}):

\begin{lemma}\label{lemma:simplebimodule}
	Let $A$ and $B$ be $G$-graded algebras and let $V$ be a finite-dimensional (ungraded) simple $A$-module or $(A,B)$-bimodule.  If $\Gamma$ and $\Gamma'$ are  two $G$-gradings that make $V$ a graded (bi)module, then $\Gamma'$ is a  shift of $\Gamma$.\qed
\end{lemma}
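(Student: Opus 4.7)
The plan is to exploit Schur's lemma together with the bi-graded structure obtained by comparing $\Gamma$ and $\Gamma'$. Write $V=\bigoplus_{g\in G} V_g$ for $\Gamma$ and $V=\bigoplus_{g\in G} V'_g$ for $\Gamma'$. For each $g\in G$, define $f_g\colon V\to V$ to be the composition of the projection onto $V_h$ (with respect to $\Gamma$) followed by the projection onto $V'_{gh}$ (with respect to $\Gamma'$) on each homogeneous component. Then $\mathrm{id}_V=\sum_{g\in G}f_g$ and, by construction, $f_g(V_h)\subseteq V'_{gh}$ for every $h\in G$, so $f_g$ is homogeneous of degree $g$ viewed as a map $(V,\Gamma)\to(V,\Gamma')$.

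The next step is to check that each $f_g$ is an $A$-module (respectively, $(A,B)$-bimodule) endomorphism of $V$. For $a\in A_k$ and $v\in V_h$, we have $av\in V_{kh}$, hence $f_g(av)$ lies in $V'_{gkh}$; on the other hand, $af_g(v)\in A_k\cdot V'_{gh}\subseteq V'_{kgh}=V'_{gkh}$ because $G$ is abelian and $\Gamma'$ also makes $V$ a graded module. A direct computation on homogeneous vectors shows the two are equal, and the analogous argument handles the right $B$-action in the bimodule case.

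Since $V$ is a finite-dimensional simple (bi)module over $\FF$ algebraically closed, Schur's lemma forces $f_g=\lambda_g\,\mathrm{id}_V$ for some $\lambda_g\in\FF$. This is the crucial step where the simplicity hypothesis does its work. The obstruction is that, a priori, several $\lambda_g$ could be nonzero, so I have to argue uniqueness: if $\lambda_g$ and $\lambda_{g'}$ are both nonzero, then for every $h$ with $V_h\ne 0$ we would have $V_h\subseteq V'_{gh}\cap V'_{g'h}$, which is zero whenever $g\ne g'$. Since $V\ne 0$, there must be exactly one $g_0$ with $\lambda_{g_0}\ne 0$; the identity $\sum_g f_g=\mathrm{id}_V$ then forces $\lambda_{g_0}=1$.

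Finally, $f_{g_0}=\mathrm{id}_V$ being homogeneous of degree $g_0$ gives the inclusions $V_h\subseteq V'_{g_0 h}$ for all $h\in G$. Summing over $h$ and comparing with $V=\bigoplus_h V'_{g_0 h}$ yields equality $V_h=V'_{g_0 h}$, i.e.\ $V'_k=V_{g_0^{-1}k}$ for all $k$. Recalling that the shift $\Gamma^{[g_0^{-1}]}$ is defined by relabeling $V_h$ as the component of degree $h g_0^{-1}$, this is exactly the statement $\Gamma'=\Gamma^{[g_0^{-1}]}$, completing the proof. The main obstacle is really only the Schur-lemma step together with checking the $A$-linearity of the $f_g$; everything else is bookkeeping with shifts.
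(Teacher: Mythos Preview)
Your argument is correct. The paper does not actually give a proof of this lemma; it simply refers to \cite[Proposition 3.5]{paper-Qn} and places a \qed, so there is nothing to compare at the level of approach. Your proof via the decomposition $\mathrm{id}_V=\sum_g f_g$ into homogeneous $(A,B)$-linear pieces and Schur's lemma is the standard direct argument and is fully valid here: finite-dimensionality of $V$ guarantees only finitely many $f_g$ are nonzero, and the abelianness of $G$ is used exactly where you indicate, to show $f_g$ commutes with the $A$- and $B$-actions.

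One harmless slip in the last line: with the paper's convention that $\Gamma^{[g]}$ relabels $V_h$ as the component of degree $hg$, the identity $V_h=V'_{g_0 h}$ gives $\Gamma'=\Gamma^{[g_0]}$, not $\Gamma^{[g_0^{-1}]}$. This does not affect the conclusion, since the lemma only asserts that $\Gamma'$ is \emph{some} shift of $\Gamma$.
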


Certain shifts of grading may be applied to graded $\ZZ$- or $\ZZ_2$-superalgebras. In the case of a $\ZZ$-superalgebra $L=L^{-1}\oplus L^{0}\oplus L^{1}$, we have the following:

\begin{lemma}\label{lemma:opposite-directions}
	Let $L=L^{-1}\oplus L^0\oplus L^1$ be a $\ZZ$-superalgebra such that $L^1\, L^{-1}\neq 0$. If we shift the grading on $L^1$ by $g\in G$ and the grading on $L^{-1}$ by $g' \in G$, then we have a grading on $L$ if and only if $g' = g^{-1}$. \qed
\end{lemma}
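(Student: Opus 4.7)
The plan is to expand the shifted decomposition $L_a = L^0_a \oplus L^1_{ag^{-1}} \oplus L^{-1}_{a(g')^{-1}}$ and check the multiplication rule $L_a L_b \subseteq L_{ab}$ case by case on the nine pairs of $\ZZ$-homogeneous components, extracting whatever condition on $g,g'$ each imposes. Most pairs are free of content: $L^0 L^0 \subseteq L^0$ is unaffected by the shift; the four products $L^0 L^{\pm 1}$ and $L^{\pm 1} L^0$ carry the same shift on both sides of the containment and so remain compatible for any choice of $g,g'$; finally $L^1 L^1$ and $L^{-1} L^{-1}$ vanish, since the target degrees $\pm 2$ lie outside the canonical $\ZZ$-grading. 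Only the two products $L^1 L^{-1}$ and $L^{-1} L^1$, both landing in $L^0$, carry any real information.

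For the $(\Leftarrow)$ direction, assume $g' = g^{-1}$. If $x \in L^1_h$ and $y \in L^{-1}_k$, then $xy \in L^0_{hk}$, while the new degrees of $x$ and $y$ are $hg$ and $kg^{-1}$, whose product is $hk$ (using that $G$ is abelian), so $xy$ lies in the correct shifted component. The product $L^{-1} L^1$ is handled symmetrically. For the $(\Rightarrow)$ direction, the hypothesis $L^1 L^{-1} \neq 0$ supplies homogeneous elements $x \in L^1_h$, $y \in L^{-1}_k$ with $0 \neq xy \in L^0_{hk}$; compatibility of the new decomposition with multiplication then forces $(hg)(kg') = hk$, whence $gg' = e$.

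The only point requiring a moment of care is the extraction of a homogeneous witness in the converse direction: since $L^1 L^{-1}$ is a graded subspace of $L^0$, its nonvanishing immediately yields a nonzero product of homogeneous elements, so no genuine obstacle arises. The hypothesis $L^1 L^{-1} \neq 0$ is clearly essential, as otherwise the only products constraining $g$ and $g'$ disappear and any pair of shifts becomes admissible.
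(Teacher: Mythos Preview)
Your proof is correct; the paper states this lemma without proof (marking it with \qed), evidently regarding the verification as routine. Your case-by-case analysis is exactly the natural argument, and your extraction of a homogeneous witness from $L^1 L^{-1} \neq 0$ is sound since this space is spanned by products of $G$-homogeneous elements.
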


We will describe this situation as \emph{shift in opposite directions}.

\subsection{Universal grading group, equivalence and isomorphism of gradings}

There is a concept of grading not involving groups. A \emph{set grading} on a (super)algebra $A$ is a decomposition $\Gamma:\,A=\bigoplus_{s\in S}A_s$ as a direct sum of sub\-(su\-per)\-spa\-ces indexed by a set $S$ and having the property that, for any $s_1,s_2\in S$ with $A_{s_1}A_{s_2}\ne 0$, there exists $s_3\in S$ such that $A_{s_1}A_{s_2}\subseteq A_{s_3}$. The \emph{support} of $\Gamma$ (or of $A$) is defined to be the set $\supp(\Gamma) := \{s\in S \mid A_s \neq 0\}$.
Similarly, $\supp_\bz(\Gamma) := \{s\in S \mid A_s^\bz \neq 0\}$ and $\supp_\bo(\Gamma) := \{s\in S \mid A_s^\bo \neq 0\}$.

For a set grading $\Gamma:\,A=\bigoplus_{s\in S}A_s$, there may or may not exist a group $G$ containing $\supp(\Gamma)$ that makes $\Gamma$ a $G$-grading. 
If such a group exists, $\Gamma$ is said to be a {\em group grading}. (As already mentioned, we only consider abelian group gradings in this paper.) 
However, $G$ is usually not unique even if we require that it should be generated by $\supp(\Gamma)$. 
The {\em universal (abelian) grading group} of $\Gamma$ is generated by $\supp(\Gamma)$ and has the defining relations 
$s_1s_2=s_3$ for all $s_1,s_2,s_3\in S$ such that $0\neq A_{s_1}A_{s_2}\subseteq A_{s_3}$. 
This group is universal among all (abelian) groups that realize the grading $\Gamma$ (see e.g. \cite[Chapter 1]{livromicha} for details).

Let $\Gamma:\,A=\bigoplus_{g\in G} A_g$ and $\Delta:\,B=\bigoplus_{h\in H} B_h$ be two group gradings on the (super)algebras $A$ and $B$, with supports $S$ and $T$, respectively.
We say that $\Gamma$ and $\Delta$ are {\em equivalent} if there exists an isomorphism of (super)algebras $\vphi: A\to B$ and a bijection $\alpha: S\to T$ such that $\vphi(A_s)=B_{\alpha(s)}$ for all $s\in S$. If $G$ and $H$ are universal grading groups then $\alpha$ extends to an isomorphism $G\to H$. In the case $G=H$, the $G$-gradings $\Gamma$ and $\Delta$ are {\em isomorphic} if $A$ and $B$ are isomorphic as $G$-graded (super)algebras, i.e., if there exists an isomorphism of (super)algebras $\vphi: A\to B$ such that $\vphi(A_g)=B_g$ for all $g\in G$.

If $\Gamma:\,A=\bigoplus_{g\in G} A_g$ and $\Gamma':\,A=\bigoplus_{h\in H} A'_h$ are two gradings on the same (super)algebra $A$, with supports $S$ and $T$, respectively, then we will say that $\Gamma'$ is a {\em refinement} of $\Gamma$ (or $\Gamma$ is a {\em coarsening} of $\Gamma'$) if, for any $t\in T$, there exists (unique) $s\in S$ such that $A'_t\subseteq A_s$. If, moreover, $A'_t\ne A_s$ for at least one $t\in T$, then the refinement is said to be {\em proper}. A grading $\Gamma$ is said to be {\em fine} if it does not admit any proper refinements. 
Note that if $A$ is a superalgebra then $A=\bigoplus_{(g,i)\in G\times\mathbb{Z}_2}A_g^i$ is a refinement of $\Gamma$. 
It follows that if $\Gamma$ is fine then the sets $\supp_\bz(\Gamma)$ and $\supp_\bo(\Gamma)$ are disjoint. 
If, moreover, $G$ is the universal group of $\Gamma$, then the superalgebra structure on $A$ is given by the unique homomorphism $p: G \to \ZZ_2$ 
that sends $\supp_\bz(\Gamma)$ to $\bar 0$ and $\supp_\bo(\Gamma)$ to $\bar 1$.

\begin{defi}
	Let $G$ and $H$ be groups, $\alpha:G\to H$ be a group homomorphism and $\Gamma:\,A=\bigoplus_{g\in G} A_g$ be a $G$-grading. The \emph{coarsening of $\Gamma$ induced by $\alpha$} is the $H$-grading ${}^\alpha \Gamma: A= \bigoplus_{h\in H} B_h$ where
	$ B_h = \bigoplus_{g\in \alpha\inv (h)} A_g$. (This coarsening is not necessarily proper.)
\end{defi}

The following result appears to be ``folklore''. We include a proof for completeness.

\begin{lemma}\label{lemma:universal-grp}
	Let $\mathcal{F}=\{\Gamma_i\}_{i\in I}$, be a family of pairwise nonequivalent fine (abelian) group gradings on a (super)algebra $A$, where $\Gamma_i$ is a $G_i$-grading and $G_i$ is generated by $\supp(\Gamma_i)$. Suppose that $\mathcal{F}$ has the following property: 
	for any grading $\Gamma$ on $A$ by an (abelian) group $H$, there exists $i\in I$ and a homomorphism $\alpha:G_i\to H$ such that $\Gamma$ 
	is isomorphic to ${}^\alpha\Gamma_i$. Then
	\begin{enumerate}[(i)]
		\item every fine (abelian) group grading on $A$ is equivalent to a unique $\Gamma_i$;
		\item for all $i$, $G_i$ is the universal (abelian) group of $\Gamma_i$.
	\end{enumerate}
\end{lemma}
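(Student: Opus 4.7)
The plan is to prove part (i) first and then use it to establish part (ii). For (i), given any fine (abelian) group grading $\Gamma : A = \bigoplus_{h \in H} A'_h$, the hypothesis supplies some $i \in I$, a homomorphism $\alpha : G_i \to H$, and an isomorphism of $H$-graded (super)algebras $\varphi : A \to A$ such that $\varphi(A'_h) = ({}^\alpha\Gamma_i)_h$ for every $h \in H$. Transporting $\Gamma_i$ through $\varphi^{-1}$ yields a $G_i$-grading on $A$ whose components are contained in the components of $\Gamma$, i.e., a refinement of $\Gamma$ as a set grading. Fineness of $\Gamma$ forces this refinement to be improper, so $\alpha$ must restrict to a bijection $\supp(\Gamma_i) \to \supp(\Gamma)$, and the pair $(\varphi, \alpha|_{\supp(\Gamma_i)})$ exhibits $\Gamma$ and $\Gamma_i$ as equivalent. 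Uniqueness of $i$ then follows immediately from the pairwise nonequivalence of the family $\mathcal{F}$.

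For (ii), fix $i$, let $U_i$ be the universal abelian group of $\Gamma_i$, and let $\pi : U_i \to G_i$ be the canonical surjection restricting to the identity on $\supp(\Gamma_i)$; the task reduces to proving $\pi$ is injective. Apply the hypothesis to $\Gamma_i$ viewed as a $U_i$-grading: this produces some $j \in I$ and $\alpha : G_j \to U_i$ with $\Gamma_i \cong {}^\alpha \Gamma_j$ as $U_i$-gradings. The argument used in (i), now applied to the fine grading $\Gamma_i$ in place of $\Gamma$, forces $j = i$ and forces $\alpha$ to restrict to a bijection between the copies of $\supp(\Gamma_i)$ sitting inside $G_i$ and $U_i$. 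Consequently both $\pi \circ \alpha \in \End(G_i)$ and $\alpha \circ \pi \in \End(U_i)$ permute a generating set of their respective groups; in particular both compositions are surjective endomorphisms, hence automorphisms, since a surjective endomorphism of a finitely generated abelian group is always injective (and the groups are finitely generated because $\supp(\Gamma_i)$ is finite). Thus $\pi$ is both surjective and injective, so $G_i$ coincides with the universal group of $\Gamma_i$.

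The only delicate step, and the one on which the whole argument hinges, is extracting from the bare isomorphism $\Gamma_i \cong {}^\alpha \Gamma_i$ of $U_i$-gradings in (ii) the information that $\alpha$ is injective on $\supp(\Gamma_i)$: were $\alpha$ allowed to identify distinct support elements, the compositions $\pi\alpha$ and $\alpha\pi$ would fail to be injective on generators and the Hopfian argument would collapse. This injectivity is exactly what fineness of $\Gamma_i$ provides, through the set-grading refinement reasoning recycled from (i); modulo this, everything else is bookkeeping.
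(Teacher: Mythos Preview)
Your proof is correct. Part (i) is essentially the paper's argument.

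For part (ii) you take a genuinely different route. The paper, continuing from the setup of (i) with $\Gamma$ realized over its universal group $H$, observes that the support bijection $\alpha|_{S_i}$ carries the defining relations of the universal group $G$ of $\Gamma_i$ to those of $H$ (because $\varphi$ is an algebra automorphism), and hence extends to an isomorphism $\beta: G \to H$; the canonical map $\gamma: G \to G_i$ then fits into a commutative triangle with $\alpha$ and $\beta$, which forces $\gamma$ to be injective, while surjectivity is immediate since $S_i$ generates $G_i$. Specializing $\Gamma = \Gamma_j$ finishes. Your argument instead feeds $\Gamma_i$, viewed over its universal group $U_i$, back into the hypothesis and invokes the Hopfian property of finitely generated abelian groups to conclude that $\pi$ is an isomorphism. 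Your approach is slick and avoids the explicit relation-checking, but it leans on the standing finite-dimensionality assumption of the paper (to ensure $\supp(\Gamma_i)$ is finite and hence $G_i$ and $U_i$ are Hopfian); the paper's relation-based argument does not use finite generation and would go through verbatim for algebras of arbitrary dimension.
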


\begin{proof}
	Let $\Gamma$ be a fine grading on $A$, realized over its universal group $H$. Then there is $i\in I$ and $\alpha: G_i \to H$ such that ${}^\alpha \Gamma_i \iso \Gamma$. Writing $\Gamma_i: A = \bigoplus_{g\in G_i} A_g$ and $\Gamma: A = \bigoplus_{h\in H} B_h$, we then have $\vphi \in \Aut(A)$ such that
	\[
		\vphi\,\big( \bigoplus_{g\in \alpha\inv (h)} A_g \big) = B_h
	\]
	for all $h\in H$. Since $\Gamma$ is fine, we must have $B_h \neq 0$ if, and only if, there is a unique $g\in G_i$ such that $\alpha(g) = h$, $A_g\neq 0$ and $\vphi(A_g) = B_h$. Equivalently, $\alpha$ restricts to a bijection $\supp(\Gamma_i) \to \supp(\Gamma)$ and $\vphi(A_g) = B_{\alpha(g)}$ for all $g \in S_i:= \supp (\Gamma_i)$. This proves assertion $(i)$.

	Let $G$ be the universal group of $\Gamma_i$. It follows that, for all $s_1, s_2, s_3 \in S_i$,
	\begin{equation*} \label{eq:relations-unvrsl-grp}
		\begin{split}
			& s_1s_2 = s_3 \text{ is a defining relation of } G \\
									 \iff & 0 \neq A_{s_1} A_{s_2} \subseteq A_{s_3}\\
									 \iff & 0 \neq B_{\alpha(s_1)} B_{\alpha(s_2)} \subseteq B_{\alpha (s_3)}\\
									 \iff & \alpha(s_1)\alpha(s_2) = \alpha(s_3) \text{ is a defining relation of } H.
		\end{split}
	\end{equation*}
	Therefore, the bijection $\alpha\restriction_{S_i}$ extends uniquely to an isomorphism $\beta: G\rightarrow H$.

	By the universal property of $G$, there is a unique homomorphism $\gamma: G\to G_i$ that restricts to the identity on $S_i$. Hence, the following diagram commutes:
	\begin{center}
		\begin{tikzcd}
			G \arrow[to=Gi, "\gamma"] \arrow[to = H, "\beta"]&&\\
			&& |[alias=H]|H\\
			|[alias=Gi]|G_i \arrow[to=H, "\alpha"]&&
		\end{tikzcd}
	\end{center}

	Since $\beta$ is an isomorphism, $\gamma$ must be injective. But $\gamma$ is also surjective since $S_i$ generates $G_i$. Hence $G_i$ is isomorphic to $G$. Since $\Gamma$ was an arbitrary fine grading, for each given $j\in I$, we can take $\Gamma = \Gamma_j$ (hence, $i=j$ and $H=G$). This concludes the proof of $(ii)$.
\end{proof}

\begin{defi}
	Let $\Gamma$ be a grading on an algebra $A$. We define $\Aut(\Gamma)$ as the group of all self-equivalences of $\Gamma$, i.e., automorphisms of $A$ that 
	permute the components of $\Gamma$. Let $\operatorname{Stab}(\Gamma)$ be the subgroup of $\Aut(\Gamma)$ consisting of the automorphisms that fix 
	each component of $\Gamma$. Clearly, $\operatorname{Stab}(\Gamma)$ is a normal subgroup of $\Aut(\Gamma)$, so we can define the \emph{Weil group} of 
	$\Gamma$ by $\operatorname W (\Gamma) := \frac{\Aut(\Gamma)}{\operatorname{Stab}(\Gamma)}$. The group $\operatorname W (\Gamma)$ can be seen as a subgroup
	of the permutation group of the support and also as a subgroup of the automorphism group of the universal group of $\Gamma$.
\end{defi}

\subsection{Correspondence between $G$-gradings and $\widehat G$-actions}\label{ssec:G-hat-action}

One of the most important tools for dealing with gradings by abelian groups on (super)algebras is to translate a $G$-grading into a $\widehat G$-action, where $\widehat G$ is the algebraic group of characters of $G$, \ie, group homomorphisms $G \rightarrow \FF^{\times}$. The group $\widehat{G}$ acts on any $G$-graded (super)algebra $A = \bigoplus_{g\in G} A_g$ by $\chi \cdot a = \chi(g) a$ for all $a\in A_g$ (extended to arbitrary $a\in A$ by linearity). The map given by the action of a character $\chi \in \widehat{G}$ is an automorphism of $A$. If $\FF$ is algebraically closed and $\Char \FF = 0$, then $A_g = \{ a\in A \mid \chi \cdot a = \chi (g) a\}$, so the grading can be recovered from the action.

For example, if $A=A\even \oplus A\odd$ is a superalgebra, the action of the nontrivial character of $\ZZ_2$ yields the \emph{parity automorphism} $\upsilon$, which acts as the identity on $A\even$ and as the negative identity on $A\odd$. If $A$ is a $\ZZ$-graded algebra, we get a representation $\widehat \ZZ = \FF^\times \rightarrow \Aut (A)$ given by $\lambda \mapsto \upsilon_\lambda$ where $\upsilon_{\lambda}$ acts as $\lambda^i \id$ on $A^i$.

A grading on a (super)algebra over an algebraically closed field of characteristic $0$ is said to be \emph{inner} if it corresponds to an action by inner automorphisms. For example, the inner gradings on $\Sl(n)$ (also known as Type I gradings) are precisely the restrictions of gradings on the associative algebra $M_n(\FF)$.

\section{Gradings on matrix algebras}
\label{sec:gradings-on-matrix-algebras}

In this section we will recall the classification of gradings on matrix algebras. We will follow \cite[Chapter 2]{livromicha} but use slightly different notation, which we will extend to superalgebras in Section \ref{sec:Mmn}.

The following is the graded version of a classical result (see e.g. \cite[Theorem 2.6]{livromicha}). 
We recall that a \emph{graded division algebra} is a graded unital associative algebra such that every nonzero homogeneous element is invertible.

\begin{thm}\label{thm:End-over-D}
	Let $G$ be a group and let $R$ be a $G$-graded associative algebra that has no nontrivial graded ideals and satisfies the descending chain condition on 
	graded left ideals. Then there is a $G$-graded division algebra $\D$ and a graded (right) $\D$-module $\mc{V}$ such that $R \simeq \End_{\D} (\mc{V})$ as graded algebras.\qed
\end{thm}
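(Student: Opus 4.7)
My plan is to imitate the classical Wedderburn--Artin argument, checking at each step that everything remains compatible with the $G$-grading. First, the descending chain condition on graded left ideals guarantees a minimal graded left ideal $\mc{V}\subseteq R$. The hypothesis that $R$ has no nontrivial graded two-sided ideals says $R$ is graded simple, so in particular $R$ acts faithfully on $\mc{V}$; by minimality, $\mc{V}$ is a graded simple left $R$-module.

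Second, let $\D$ be the graded endomorphism algebra of $\mc{V}$ over $R$: its degree-$d$ component consists of $R$-linear maps $f\colon \mc{V}\to \mc{V}$ satisfying $f(\mc{V}_g)\subseteq \mc{V}_{dg}$ for every $g\in G$, so that $\D=\bigoplus_d \D_d$ is a $G$-graded algebra. A graded version of Schur's lemma now applies: for a nonzero homogeneous $f\in \D_d$, both $\ker f$ and $\operatorname{im} f$ are graded $R$-submodules of $\mc{V}$ (the former since $f$ is homogeneous, the latter because $f(\mc{V}_g)\subseteq \mc{V}_{dg}$), so they must be $0$ and $\mc{V}$ respectively, and $f$ is invertible. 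Hence $\D$ is a graded division algebra, and $\mc{V}$ acquires a natural graded right $\D$-module structure via $v\cdot f := f(v)$ (after passing to the opposite algebra if one prefers left operators).

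Third, the map $\rho\colon R\to \End_\D(\mc{V})$ sending $r$ to left multiplication by $r$ is a homomorphism of $G$-graded algebras. Its kernel is the annihilator of $\mc{V}$ in $R$, a graded two-sided ideal, which must be $0$ by graded simplicity; so $\rho$ is injective.

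The delicate step, and the one I expect to require the most care, is surjectivity of $\rho$. The cleanest route is a graded Jacobson density argument: given $\D$-linearly independent homogeneous elements $v_1,\ldots,v_k\in \mc{V}$ and arbitrary homogeneous $w_1,\ldots,w_k\in \mc{V}$, one must produce $r\in R$ with $rv_i=w_i$ for all $i$. The classical induction goes through verbatim once one is careful to take homogeneous witnesses at each step, using that for any proper graded $R$-submodule $W$ of $\mc{V}^k$ the orbit $Rv$ of a new homogeneous generator meets the appropriate graded annihilator in a nonzero graded submodule, hence in everything by graded simplicity. The DCC then forces $\mc{V}$ to have finite length over $\D$ in the graded sense, and density upgrades to surjectivity. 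A complementary viewpoint, which also illuminates the structure, is to use the DCC to decompose $R$ itself as a finite direct sum of minimal graded left ideals; by graded simplicity of $\mc{V}$ each summand is isomorphic as a graded $R$-module to some shift $\mc{V}^{[g]}$, and the block decomposition identifies $R$ with $\End_\D(\mc{V})$ as $G$-graded algebras, with the collection of shifts encoding the multiplicities of the homogeneous components of $\mc{V}$.
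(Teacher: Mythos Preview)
The paper does not actually prove this theorem: it is stated with a terminal \qed and attributed to \cite[Theorem 2.6]{livromicha} in the sentence preceding it. Your outline is the standard graded Wedderburn--Artin argument (minimal graded left ideal, graded Schur's lemma for $\D$, graded Jacobson density plus the DCC decomposition $R\simeq\bigoplus_i \mc{V}^{[g_i]}$ for surjectivity), which is exactly the route taken in the cited reference, so there is nothing substantive to compare.
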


We apply this result to the algebra $R=M_n(\FF)$ equipped with a grading by an abelian group $G$. We will now introduce the parameters that determine 
$\mc D$ and $\mc V$, and give an explicit isomorphism $\End_{\D} (\mc{V})\simeq M_n(\FF)$ (see Definition \ref{def:explicit-grd-assoc}).

Let $\D$ be a finite-dimensional $G$-graded division algebra. It is easy to see that $T= \supp \D$ is a finite subgroup of $G$. 
Also, since we are over an algebraically closed field, each homogeneous component $\D_t$, for $t\in T$, is one-dimensional. We can choose a generator $X_t$ for each $\D_t$. It follows that, for every $u,v\in T$, there is a unique nonzero scalar $\beta (u,v)$ such that $X_u X_v = \beta (u,v) X_v X_u$. Clearly, $\beta (u,v)$ does not depend on the choice of $X_u$ and $X_v$.
The map $\beta: T\times T \rightarrow \FF^{\times}$ is a \emph{bicharacter}, \ie, both maps $\beta(t,\cdot)$ and $\beta(\cdot,t)$ are characters for every $t \in T$. It is also \emph{alternating} in the sense that $\beta (t,t) = 1$ for all $t\in T$. We define the \emph{radical} of $\beta$ as the set $\rad \beta = \{ t\in T \mid \beta(t, T) = 1 \}$. In the case we are interested in, where $\D$ is simple as an algebra, the bicharacter $\beta$ is \emph{nondegenerate}, \ie, $\rad \beta = \{e\} $. The isomorphism classes of $G$-graded division algebras that are finite-dimensional and simple as algebras are in one-to-one correspondence with the pairs $(T,\beta)$ where $T$ is a finite subgroup of $G$ and $\beta$ is an alternating nondegenerate bicharacter on $T$ (see e.g. \cite[Section 2.2]{livromicha} for a proof).

Using that the bicharacter $\beta$ is nondegenerate, we can decompose the group $T$ as $A\times B$, where the restrictions of $\beta$ to each of the subgroups $A$ and $B$ is trivial, and hence $A$ and $B$ are in duality by $\beta$. We can choose the elements $X_t\in \D_t$ in a convenient way (see \cite[Remark 2.16]{livromicha} and 
\cite[Remark 18]{EK15}) such that $X_{ab}=X_aX_b$ for all $a\in A$ and $b\in B$. Using this choice, we can define an action of $\D$ on the vector space underlying the group algebra $\FF B$, by declaring $X_a\cdot e_{b'} = \beta(a, b') e_{b'}$ and $X_b\cdot e_{b'} = e_{bb'}$. 
This action allows us to identify $\D$ with $\End{(\FF B)}$. Using the basis $\{e_{b}\mid b\in B\}$ in $\FF B$, we can see it as a matrix algebra, where
\[X_{ab}= \sum_{b'\in B} \beta(a, bb') E_{bb', b'}\]
and $E_{b'', b'}$ with $b'$, $b'' \in B$, is a matrix unit, namely, the matrix of the operator that sends $e_{b'}$ to $e_{b''}$ and sends all other basis elements to zero.

\begin{defi}
	We will refer to these matrix models of $\mc D$ as its \emph{standard realizations}.
\end{defi}

\begin{remark}\label{rmk:2-grp-transp}
	The matrix transposition is always an involution of the algebra structure. As to the grading, we have
	\[
	X_{ab}\transp = \sum_{b'\in B} \beta(a, bb') E_{b',bb'}
	   = \beta(a,b) \sum_{b''\in B} \beta(a, b^{-1}b'') E_{b^{-1}b'', b''} = \beta(a,b) X_{ab^{-1}}.
	\]
	It follows that if $T$ is an elementary 2-group, then the transposition preserves the degree. 
	In this case, we will use it to fix an identification between the graded algebras $\D$ and $\D\op$.
\end{remark}


Graded modules over a graded division algebra $\mc D$ behave similarly to vector spaces. The usual proof that every vector space has a basis, with obvious modifications, shows that every graded $\mc D$-module has a \emph{homogeneous basis}, \ie, a basis formed by homogeneous elements.
Let $\mc V$ be such a module of finite rank $k$, fix a homogeneous basis $\mc B = \{v_1, \ldots, v_k\}$ and let $g_i := \operatorname{deg} v_i$. We then have $\mc{V}\iso \ \D^{[g_1]}\oplus\cdots\oplus\D^{[g_k]}$, so, the graded $\mc D$-module $\mc V$ is determined by the $k$-tuple $\gamma = (g_1,\ldots, g_k)$. The tuple $\gamma$ is not unique. To capture the precise information that determines the isomorphism class of $\mc V$, we use the concept of \emph{multiset}, \ie, a set together with a map from it to the set of positive integers. If $\gamma = (g_1,\ldots, g_k)$ and $T=\supp \D$, we denote by $\Xi(\gamma)$ the multiset whose underlying set is $\{g_1 T,\ldots, g_k T\} \subseteq G/T$ and the multiplicity of $g_i T$, for $1\leq i\leq k$, is the number of entries of $\gamma$ that are congruent to $g_i$ modulo $T$.

Using $\mc B$ to represent the linear maps by matrices in $M_k(\D) = M_k(\FF)\tensor \D$, we now construct an explicit matrix model for $\End_{\D}(\mc V)$.

\begin{defi}\label{def:explicit-grd-assoc}
	Let $T \subseteq G$ be a finite subgroup, $\beta$ a nondegenerate alternating bicharacter on $T$, and $\gamma = (g_1, \ldots, g_k)$ a $k$-tuple of elements of $G$. Let $\D$ be a standard realization of a graded division algebra associated to $(T, \beta)$. Identify $M_k(\FF)\tensor \D \iso M_n(\FF)$ by means of the Kronecker product, where $n=k\sqrt{|T|}$. We will denote by $\Gamma(T, \beta, \gamma)$ the grading on $M_n(\FF)$ given by $\deg (E_{ij} \tensor d) := g_i (\deg d) g_j\inv$ for $i,j\in \{1, \ldots , k\}$ and homogeneous $d\in \D$, where $E_{ij}$ is the $(i,j)$-th matrix unit.
\end{defi}

If $\End(V)$, equipped with a grading, is isomorphic to $M_n(\FF)$ with $\Gamma(T, \beta, \gamma)$, we may abuse notation and also denote the grading on $\End(V)$ by $\Gamma(T,\beta,\gamma)$.
We restate \cite[Theorem 2.27]{livromicha} using our notation:

\begin{thm}\label{thm:classification-matrix}
	Two gradings, $\Gamma(T,\beta,\gamma)$ and $\Gamma(T',\beta',\gamma')$, on the algebra $M_n(\FF)$ are isomorphic if, and only if, $T=T'$, $\beta=\beta'$ and there is an element $g\in G$ such that $g \Xi(\gamma)=\Xi(\gamma')$.\qed
\end{thm}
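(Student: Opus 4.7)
My plan is to translate the statement into the representation-theoretic picture given by Theorem \ref{thm:End-over-D}, writing each grading as $\End_{\mc D}(\mc V)$, and to derive both directions from the uniqueness of $\mc D$ (up to graded isomorphism) and of $\mc V$ (up to shift).

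For the \emph{if} direction, I would assume $T = T'$, $\beta = \beta'$ and $g\,\Xi(\gamma) = \Xi(\gamma')$. The first two hypotheses make $\mc D$ and $\mc D'$ literally the same graded algebra (by Definition \ref{def:explicit-grd-assoc}), so both sides of the desired isomorphism are graded endomorphism algebras of graded free right $\mc D$-modules. A graded free $\mc D$-module is determined up to graded module isomorphism by the multiset of cosets modulo $T$ of the degrees of a homogeneous basis, because $\mc D^{[h]} \simeq \mc D^{[h']}$ as graded $\mc D$-modules iff $hT = h'T$. Hence the multiset hypothesis is equivalent to $\mc V' \simeq \mc V^{[g]}$ as graded $\mc D$-modules. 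A direct verification shows that $\End_{\mc D}(\mc V^{[g]}) = \End_{\mc D}(\mc V)$ as graded algebras (a global shift of a graded module leaves the degree of every $\mc D$-linear endomorphism unchanged), and the claimed isomorphism of gradings follows.

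For the \emph{only if} direction, I would start with a graded-algebra isomorphism $\vphi:\Gamma(T,\beta,\gamma) \to \Gamma(T',\beta',\gamma')$ on $R := M_n(\FF)$. The uniqueness part of Theorem \ref{thm:End-over-D} yields a graded-algebra isomorphism $\mc D \simeq \mc D'$. Since in the standard realization $T$ is recovered as $\supp \mc D$ and $\beta$ from the commutation relations among the $X_t$, this forces $T = T'$ and $\beta = \beta'$. After identifying $\mc D = \mc D'$, I would use $\vphi$ to transport the left $R$-action on $\mc V$ over to $\mc V'$, equipping $\mc V'$ with a second graded left $R$-action (the right $\mc D$-action remaining the original one). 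As an ungraded $(R,\mc D)$-bimodule, the resulting $\mc V'$ is isomorphic to $\mc V$, since both are simple as left $R$-modules (because $R$ is simple) and have the same $\mc D$-rank. Pulling the grading of $\mc V'$ back to $\mc V$ via such an iso, Lemma \ref{lemma:simplebimodule} produces $g \in G$ with $\mc V^{[g]} \simeq \mc V'$ as graded bimodules, hence as graded $\mc D$-modules. Translating to multisets of cosets gives $\Xi(\gamma') = g\,\Xi(\gamma)$, as required.

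The main obstacle is the uniqueness of $\mc D$ in Theorem \ref{thm:End-over-D}, without which one cannot extract $(T,\beta)$ from the graded algebra alone. Once uniqueness is granted, the only delicate point is arranging that the two gradings to be compared live on the \emph{same} ungraded bimodule; the transport of the $R$-action via $\vphi$ accomplishes exactly this, reducing the rest to a direct application of Lemma \ref{lemma:simplebimodule}.
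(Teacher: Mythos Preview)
The paper does not actually prove this theorem: it is restated from \cite[Theorem~2.27]{livromicha} and carries a \qed\ immediately after the statement. The paper does, however, indicate that the proof rests on Proposition~\ref{prop:inner-automorphism}, and this is where your route differs. Once $\mc D\simeq\mc D'$ is established, the intended argument applies Proposition~\ref{prop:inner-automorphism} to the given isomorphism $\phi$ to obtain a homogeneous invertible $\mc D$-linear map $\psi:\mc V\to\mc V'$; if $\deg\psi=g$, then $\psi$ is a graded $\mc D$-module isomorphism $\mc V^{[g]}\to\mc V'$, yielding $g\,\Xi(\gamma)=\Xi(\gamma')$ directly. Your detour through the transported bimodule structure and Lemma~\ref{lemma:simplebimodule} reaches the same conclusion, but less canonically: the shift it produces is not a~priori tied to $\phi$, whereas Proposition~\ref{prop:inner-automorphism} manufactures the shift out of $\phi$ itself.

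One correction: your justification that $\mc V$ is ``simple as a left $R$-module'' is wrong unless $T$ is trivial. Over $\FF$, $\dim\mc V = k\,|T| = n\sqrt{|T|}$, so $\mc V$ is a direct sum of $\sqrt{|T|}$ copies of the simple $M_n(\FF)$-module. What you actually need, and what is true, is that $\mc V$ is simple as an $(R,\mc D)$-\emph{bimodule}, since $R\otimes\mc D^{\mathrm{op}}\simeq M_{n\sqrt{|T|}}(\FF)$ is simple with a unique simple module of the correct dimension; this is enough for Lemma~\ref{lemma:simplebimodule}. Your ``main obstacle'' (uniqueness of $\mc D$, equivalently $T=T'$ and $\beta=\beta'$) is genuine and is handled in the reference by \cite[Proposition~2.18]{livromicha}; note that Proposition~\ref{prop:inner-automorphism} as stated already presupposes a common $\mc D$, so neither approach bypasses this step.
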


The proof of this theorem is based on the following result (see Theorem 2.10 and Proposition 2.18 from \cite{livromicha}), which will also be needed:

\begin{prop}\label{prop:inner-automorphism}
	If $\phi: \End_\D (\mc V) \rightarrow \End_\D (\mc V')$ is an isomorphism of graded algebras, then there is a homogeneous invertible 
	$\D$-linear map $\psi: \mc V\rightarrow \mc V'$ such that $\phi(r)=\psi \circ r \circ \psi\inv$, for all $r\in \End_\D (\mc V)$.\qed
\end{prop}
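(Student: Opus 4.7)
The plan is to combine the classical (ungraded) Skolem--Noether theorem with the graded rigidity of simple bimodules provided by Lemma \ref{lemma:simplebimodule}. First I would produce any $\D$-linear ungraded isomorphism intertwining $\phi$, and only then use the grading structure to upgrade it to a homogeneous one.

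Step 1 (ungraded intertwiner). Let $R := \End_\D(\mc V)$ and $R' := \End_\D(\mc V')$. Since $\dim_\FF R = \dim_\FF R'$, the free right $\D$-modules $\mc V$ and $\mc V'$ have the same rank $k$, so I can fix any (ungraded) right $\D$-module isomorphism $\tau : \mc V \to \mc V'$. Conjugation by $\tau$ yields an $\FF$-algebra isomorphism $\phi_0 : R \to R'$, $r \mapsto \tau r \tau^{-1}$, so that $\phi_0^{-1}\circ \phi$ is an automorphism of $R$. Because $\D$ is simple with center $\FF$, the same holds for $R \cong M_k(\D)$, and the classical Skolem--Noether theorem tells us this automorphism is inner, say $\phi_0^{-1}\circ \phi = \Ad_{s^{-1}}$ for some invertible $s \in R$. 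Setting $\psi_0 := \tau \circ s^{-1}$ gives a $\D$-linear isomorphism $\mc V \to \mc V'$ with $\phi(r) = \psi_0 \circ r \circ \psi_0^{-1}$ for all $r \in R$. In particular, transporting the left $R$-action of $\mc V'$ through $\phi$ makes $\mc V'$ a graded $(R,\D)$-bimodule, and $\psi_0$ is an isomorphism of (ungraded) $(R,\D)$-bimodules.

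Step 2 (making $\psi_0$ homogeneous). Pull the grading of $\mc V'$ back to $\mc V$ via $\psi_0$. This produces a second $G$-grading $\Gamma'$ on $\mc V$ which, together with the original grading $\Gamma$, makes $\mc V$ a graded $(R,\D)$-bimodule. Now $\mc V$ is simple as a left $R$-module (any nonzero $v\in \mc V$ can be extended to a $\D$-basis, and $R$ contains the $\D$-linear map carrying $v$ to any prescribed vector), hence simple also as an $(R,\D)$-bimodule. Lemma \ref{lemma:simplebimodule} therefore applies and gives $\Gamma' = \Gamma^{[g]}$ for some $g \in G$. Unpacking the definition of a shift, this is exactly the statement that $\psi_0$ sends $\mc V_h$ into $\mc V'_{gh}$ for every $h \in G$, i.e., $\psi_0$ is homogeneous of degree $g$.

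Step 3 (conclusion). Setting $\psi := \psi_0$, we have a homogeneous invertible $\D$-linear map with $\phi(r) = \psi \circ r \circ \psi^{-1}$ for all $r \in R$. The main obstacle is really concentrated in Step 1: without the graded information, one still needs an intertwiner that is both $\D$-linear and respects $\phi$, and this is where Skolem--Noether is essential. Step 2 is then a painless application of the graded rigidity already established.
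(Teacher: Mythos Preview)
The paper does not give its own proof of this proposition; it simply cites Theorem 2.10 and Proposition 2.18 of \cite{livromicha}. Your overall strategy---first produce an ungraded $\D$-linear intertwiner via Skolem--Noether, then upgrade it to a homogeneous one using Lemma \ref{lemma:simplebimodule}---is sound and self-contained.

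There is, however, a gap in Step 2. You claim that $\mc V$ is simple as a left $R$-module on the grounds that ``any nonzero $v\in\mc V$ can be extended to a $\D$-basis''. This is false: $\D$ is only a \emph{graded} division algebra, and as an ungraded algebra it is isomorphic to $M_\ell(\FF)$ with $\ell=\sqrt{|T|}$. When $\ell>1$, a generic element cannot be completed to a $\D$-basis (take $k=1$, $\mc V=\D=M_2(\FF)$, $v=E_{11}$), and in fact $\mc V$, of $\FF$-dimension $k\ell^2$, decomposes as $\ell$ copies of the simple left module over $R\cong M_{k\ell}(\FF)$, so it is \emph{not} a simple $R$-module.

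What you actually need---and what is true---is that $\mc V$ is simple as an $(R,\D)$-\emph{bimodule}. One quick argument: a nonzero sub-bimodule $W$ is a $\D$-submodule, hence by semisimplicity of $\D$ has a $\D$-complement $W'$; if $W'\ne 0$ then any nonzero $\D$-linear map $W\to W'$, extended by zero on $W'$, lies in $R$ but does not preserve $W$. (Alternatively, $R\otimes_\FF\D^{\mathrm{op}}\cong M_{k\ell^2}(\FF)$ and $\dim_\FF\mc V=k\ell^2$, so $\mc V$ is the unique simple module.) With this correction Lemma \ref{lemma:simplebimodule} applies exactly as you intended, and the rest of your argument goes through unchanged.
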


\section{Gradings on $M(m,n)$}\label{sec:Mmn}

\subsection{The associative superalgebra $M(m,n)$}\label{M(m,n)}
Let $U = U\even \oplus U\odd$  be a superspace.
The algebra of endomorphisms of $U$ has an induced $\Zmod2$-grading, so it can be regarded as a superalgebra. It is convenient to write it in matrix form:
\begin{equation}\label{eq:End_U}
	\End(U) = \left(\begin{matrix}
	\End(U\even)       &  \Hom(U\odd,U\even)\\
	\Hom(U\even,U\odd) &  \End(U\odd)\\
	\end{matrix}
	\right).
\end{equation}
Choosing bases, we may assume that $U\even=\FF^m$ and $U\odd=\FF^n$, so the superalgebra $\End(U)$ can be seen as a matrix superalgebra, which is denoted by $M(m,n)$.

We may also regard $U$ as a $\ZZ$-graded vector space, putting $U^0=U\even$ and $U^1=U\odd$. By doing so, we obtain an induced $\ZZ$-grading on $M(m,n) = \End (U)$ such that
\[(\End\, U)\even =(\End\, U)^0 =
	\left(\begin{matrix}
	\End(U\even)       &  0\\
	0                  &  \End(U\odd)\\
	\end{matrix}
	\right)
\]
and $(\End\, U)\odd = (\End\, U)^{-1}\oplus (\End\, U)^1$ where
\[(\End U)^{1}=
	\left(\begin{matrix}
	0                  &  0\\
	\Hom(U\even,U\odd) &  0\\
	\end{matrix}
	\right) \,\text{ and }\, (\End U)^{-1}=
	\left(\begin{matrix}
	0                  &  \Hom(U\odd,U\even)\\
	0				   &  0                 \\
	\end{matrix}
	\right).
\]
This grading will be called the \emph{canonical $\ZZ$-grading} on $M(m,n)$.

\subsection{Automorphisms of $M(m,n)$}

It is known that the automorphisms of the superalgebra $\End(U)$ are conjugations by invertible homogeneous operators. 
(This follows, for example, from Proposition \ref{prop:inner-automorphism}.) The invertible even operators are of the form $\left( \begin{matrix}
a&0\\
0&d\\
\end{matrix}\right)$ where $a\in \GL(m)$ and $d\in \GL(n)$. The corresponding inner automorphisms of $M(m,n)$ will be called \emph{even automorphisms}. 
They form a normal subgroup of $\Aut(M(m,n))$, which we denote by $\mc E$.

The inner automorphisms given by odd operators will be called \emph{odd automorphisms}. 
Note that an invertible odd operator must be of the form $\left( \begin{matrix}
0&b\\
c&0\\
\end{matrix}\right)$ where both $b$ and $c$ are invertible, and this forces $m=n$. 
In this case, the set of odd automorphisms is a coset of $\mc E$, namely, $\pi \mc E$, 
where $\pi$ is the conjugation by the matrix $\left( \begin{matrix}
0_n & I_n\\
I_n & 0_n\\
\end{matrix}\right)$. This automorphism is called the \emph{parity transpose} and is usually denoted by superscript: 
\begin{equation*} 
\left( \begin{matrix}
a&b\\
c&d\\
\end{matrix}\right)^\pi = \left( \begin{matrix}
d&c\\
b&a\\
\end{matrix}\right).
\end{equation*} 
Thus, $\Aut (M(m,n)) = \mc E$ if $m\neq n$, and $\Aut (M(n,n)) = \mc E \rtimes \langle \pi \rangle$.

\begin{remark}\label{rmk:Aut-ZZ-superalgebra}
	It is worth noting that $\mc E$ is the automorphism group of the $\ZZ$-\-su\-per\-al\-gebra structure of $M(m,n)$, regardless of the values $m$ and $n$. Indeed, the elements of this group are conjugations by homogeneous matrices with respect to the canonical $\ZZ$-grading, but all the matrices of degree $-1$ or $1$ are degenerate.
\end{remark}

\subsection{Gradings on matrix superalgebras}\label{ssec:grds-on-superalgebras}

We are now going to generalize the results of Section \ref{sec:gradings-on-matrix-algebras} to the superalgebra $\M(m,n)$. It is clear that a $G$-graded associative superalgebra is equivalent to a $(G \times \ZZ_2)$-graded associative algebra, hence one could think that there is no new problem. But the description of gradings on matrix algebras presented in Section \ref{sec:gradings-on-matrix-algebras} does not allow us to readily see the gradings on the even and odd components of the superalgebra, so we are going to refine that description. We will denote the group $G\times \ZZ_2$ by $G^\#$ and the projection on the second factor by $p\colon G^\# \rightarrow \ZZ_2$. Also, we will abuse notation and identify $G$ with $G\times \{\barr 0\} \subseteq G^\#$.

\begin{remark}
	If the canonical $\ZZ_2$-grading is a coarsening of the $G$-grading by means of a homomorphism $p\colon G\rightarrow \ZZ_2$ (referred to as the \emph{parity homomorphism}), then we have another isomorphic copy of $G$ in $G^\#$, namely, the image of the embedding $g\mapsto (g, p (g))$, which contains the support of the $G^\#$-grading. In this case, we do not need $G^\#$ and can work with the original $G$-grading.
\end{remark}

A $G$-graded superalgebra $\mathcal D$ is called a \emph{graded division superalgebra} if every nonzero homogeneous element in $\D\even \cup \D\odd$ is invertible --- in other words, $\D$ is a $G^\#$-graded division algebra.

We separate the gradings on $\M(m,n)$ in two classes depending on the superalgebra structure on $\D$: if $\D\odd = 0$, we say that we have an \emph{even grading} and, if $\D\odd \ne 0$, we have an \emph{odd grading}. 

To see the difference between even and odd gradings, consider the $G^\#$-graded algebra $E=\End_\D (\mc U)$, where $\D$ is a $G^{\#}$-graded division algebra and $\mc U$ is a graded module over $\D$. Define
\[
	\mc U\even = \bigoplus_{g\in G^\#} \{u\in \mc U_g \mid p(g)=\barr 0\}\,\, \text{and} \,\,\mc U\odd = \bigoplus_{g\in G^\#} \{u\in \mc U_g \mid p(g)=\barr 1\}.
\]
Then $\mc U\even$ and $\mc U\odd$ are $\D\even$-modules, but they are $\D$-modules if and only if $\D\odd=0$. So, in the case of an even grading, $\mc U$ is as a direct sum of $\D$-modules, and all the information related to the canonical $\ZZ_2$-grading on $\End_\D (\mc U)$ comes from the decomposition $\mc U=\mc U\even \oplus \mc U\odd$.

\begin{defi}\label{def:even-grd-on-Mmn}
	Similarly to Definition \ref{def:explicit-grd-assoc}, we will parametrize the even gradings on $M(m,n)$ as $\Gamma(T,\beta, \gamma_0, \gamma_1)$, where the pair $(T,\beta)$ characterizes $\D$ and $\gamma_0$ and $\gamma_1$ are tuples of elements of $G$ corresponding to the degrees of homogeneous bases for $\mc U\even$ and $\mc U\odd$, respectively. Here $\gamma_0$ is a $k_0$-tuple and $\gamma_1$ is a $k_1$-tuple, with $k_0\sqrt{|T|}=m$ and $k_1\sqrt{|T|}=n$.
\end{defi}

On the other hand, in the case of an odd grading, the information about the canonical $\ZZ_2$-grading is encoded in $\D$. To see that, take a homogeneous $\D$-basis of $\mc U$ and multiply all the odd elements by some nonzero homogeneous element in $\D\odd$. This way we get a homogeneous $\D$-basis of $\mc U$ such that the degrees are all in the subgroup $G$ of $G^\#$. If we denote the $\FF$-span of this new basis by $\widetilde U$, then $E\iso \End (\widetilde U)\tensor \D$ where the first factor has the trivial $\ZZ_2$-grading.

\begin{defi}\label{def:odd-grd-on-Mmn-1}
	We parametrize the odd gradings by $\Gamma(T, \beta, \gamma)$ where $T\subseteq G^\#$ but $T\subsetneq G$, the pair $(T,\beta)$ characterizes $\D$, and $\gamma$ is a tuple of elements of $G = G\times \{\bar 0\}$ corresponding to the degrees of a homogeneous basis of $\mc U$ with only even elements.
\end{defi}

Clearly, it is impossible for an even grading to be isomorphic to an odd grading. The classification of even gradings is the following:

\begin{thm}\label{thm:even-assc-iso}
	Every even $G$-grading on the superalgebra $M(m,n)$ is isomorphic to some $\Gamma(T,\beta, \gamma_0, \gamma_1)$ as in Definition \ref{def:even-grd-on-Mmn}.
	Two even gradings, $\Gamma = \Gamma(T,\beta, \gamma_0, \gamma_1)$ and $\Gamma' = \Gamma(T',\beta', \gamma_0', \gamma_1')$, 
	are isomorphic if, and only if, $T=T'$, $\beta=\beta'$, and there is $g\in G$ such that
	\begin{enumerate}[(i)]
		\item for $m\neq n$: $g \Xi(\gamma_0)=\Xi(\gamma_0')$ and $g \Xi(\gamma_1)=\Xi(\gamma_1')$;

		\item for $m = n$: either $g \Xi(\gamma_0)=\Xi(\gamma_0')$ and $g \Xi(\gamma_1)=\Xi(\gamma_1')$ or $g\Xi(\gamma_0)=\Xi(\gamma_1')$ and $g \Xi(\gamma_1)=\Xi(\gamma_0')$.
	\end{enumerate}
\end{thm}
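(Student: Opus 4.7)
The plan is to deduce this theorem from Theorem \ref{thm:classification-matrix} by viewing an even $G$-grading on $M(m,n)$ as a $G^\#$-grading on the underlying matrix algebra $M_{m+n}(\FF)$, and then carefully tracking how the extra $\ZZ_2$-factor in the shift group translates into the even/odd parts of the parametrization.

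First, for existence, I would apply Theorem \ref{thm:End-over-D} to the $G^\#$-graded simple artinian algebra $M_{m+n}(\FF)$ and obtain $\D$ and $\mc V$ as in that theorem. Because the grading is even, $\D\odd=0$ and hence $T=\supp\D\subseteq G\times\{\bar 0\}$, which we identify with a subgroup of $G$. As explained before Definition \ref{def:even-grd-on-Mmn}, $\mc V=\mc V\even\oplus\mc V\odd$ is a direct sum of graded $\D$-modules, so one can choose a homogeneous $\D$-basis of $\mc V$ of the form described in Definition \ref{def:even-grd-on-Mmn}, producing tuples $\gamma_0$ and $\gamma_1$ of elements of $G$. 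This yields the presentation $\Gamma(T,\beta,\gamma_0,\gamma_1)$.

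For the isomorphism classification, the key observation is that an isomorphism of $G$-graded superalgebras is the same as an isomorphism of $G^\#$-graded algebras. Thus, applying Theorem \ref{thm:classification-matrix} to $G^\#$, two even gradings $\Gamma(T,\beta,\gamma_0,\gamma_1)$ and $\Gamma(T',\beta',\gamma_0',\gamma_1')$ are isomorphic if and only if $T=T'$, $\beta=\beta'$, and there exists $g^\#=(g,\epsilon)\in G^\#$ such that $g^\#\,\Xi(\gamma)=\Xi(\gamma')$, where $\gamma$ is the concatenation of $\gamma_0$ with label $\bar 0$ and $\gamma_1$ with label $\bar 1$ (and similarly for $\gamma'$). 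Splitting by the parity $\epsilon$: if $\epsilon=\bar 0$, the parity labels are preserved, producing case (i) with the genuine shift $g\in G$; if $\epsilon=\bar 1$, the parity labels are swapped, producing case (ii), which forces $|\gamma_0|=|\gamma_1'|$ and $|\gamma_1|=|\gamma_0'|$, i.e., $m=n$.

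To complete the picture, I need to confirm both directions of this reduction. The ``only if'' follows at once from Theorem \ref{thm:classification-matrix}; for the ``if'' direction in case (i), the required isomorphism is obtained by shifting the homogeneous $\D$-basis of $\mc V$ by $g$, exactly as in the proof of Theorem \ref{thm:classification-matrix}. The main subtlety, and what I expect to be the only real obstacle, is the ``if'' direction of case (ii): I would exhibit the isomorphism explicitly by composing a suitable shift-by-$g$ isomorphism with the parity transpose $\pi$ defined in Subsection \ref{M(m,n)}, and check that $\pi$ precisely implements the shift of the $G^\#$-grading by $(e,\bar 1)$ that swaps $\gamma_0$ and $\gamma_1$ on the superalgebra level. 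Once this bookkeeping is done, the statement of the theorem follows.
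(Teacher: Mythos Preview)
Your proposal is correct and follows essentially the same route as the paper: reduce to Theorem \ref{thm:classification-matrix} by regarding an even $G$-grading on $M(m,n)$ as a $G^\#$-grading on $M_{m+n}(\FF)$, with $\gamma$ the concatenation of $\gamma_0$ (tagged $\bar 0$) and $\gamma_1$ (tagged $\bar 1$), and then split on the $\ZZ_2$-component of the shift.

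One remark: the extra step you anticipate for the ``if'' direction in case (ii) --- explicitly constructing the isomorphism via the parity transpose $\pi$ --- is not needed. Once you have established that isomorphisms of $G$-graded superalgebras coincide with isomorphisms of $G^\#$-graded algebras, Theorem \ref{thm:classification-matrix} is already an ``if and only if'' statement, so both directions come for free; the paper's proof simply reads off the two cases $s=\bar 0$ and $s=\bar 1$ without any further construction.
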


\begin{proof}
	We have already proved the first assertion. For the second assertion, we
	consider $\Gamma$ and $\Gamma'$ as $G^\#$-gradings on the algebra  $M(m+n)$ and use Theorem \ref{thm:classification-matrix} to conclude that they are isomorphic if, and only if, $T=T'$, $\beta=\beta'$ and there is $(g,s)\in G^\#$ such that $(g,s)\Xi(\gamma)=\Xi(\gamma')$, where $\gamma$ is the concatenation of $\gamma_0$ and $\gamma_1$, where we regard the entries as elements of $G^\# = G\times \ZZ_2$ appending $\barr{0}$ in the second coordinate of the entries of $\gamma_0$ and $\bar 1$ in the second coordinates of the entries of $\gamma_1$.

	If $m\neq n$, the condition $(g,s)\Xi(\gamma)=\Xi(\gamma)$ must have $s=\barr0$, since the size of $\gamma_0$ is different from the size of $\gamma_1$.

	If $m=n$, the condition $(g,s)\Xi(\gamma)=\Xi(\gamma')$ becomes $g \Xi(\gamma_1)=\Xi(\gamma_1')$ if $s=\barr0$ and $g \Xi(\gamma_1)=\Xi(\gamma_0')$ if $s=\barr1$.
\end{proof}

We now turn to the classification of odd gradings. Recall that here we choose the tuple $\gamma$ to consist of elements of $G$. The corresponding multiset $\Xi(\gamma)$ is contained in $\frac{G^\#}{T} \iso \frac{G}{T \cap G}$.

\begin{thm}\label{thm:first-odd-iso}
	Every odd $G$-grading on the superalgebra $M(m,n)$ is isomorphic to some $\Gamma(T,\beta, \gamma)$ as in Definition \ref{def:odd-grd-on-Mmn-1}.
	Two odd gradings, $\Gamma = \Gamma(T,\beta, \gamma)$ and $\Gamma' = \Gamma(T',\beta', \gamma')$,  
	are isomorphic if, and only if, $T=T'$, $\beta=\beta'$, and there is $g\in G$ such that $g \Xi(\gamma)=\Xi(\gamma')$.
\end{thm}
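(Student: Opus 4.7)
The plan is to mirror the proof of Theorem \ref{thm:even-assc-iso}: regard a $G$-graded superalgebra structure on $M(m,n)$ as a $G^{\#}$-graded algebra structure on the underlying algebra $M_{m+n}(\FF)$, and reduce to Theorem \ref{thm:classification-matrix}. What distinguishes the odd case from the even one is that the associated graded division algebra $\mc D$ now has a nonzero odd component; this is precisely what makes Definition \ref{def:odd-grd-on-Mmn-1} natural, and what ultimately lets us choose the isomorphism-producing shift in $G$ rather than in $G^{\#}$.

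For the existence of the parametrization, I would apply Theorem \ref{thm:End-over-D} to $M(m,n)$ viewed as a $G^{\#}$-graded algebra to obtain an isomorphism $M(m,n)\iso \End_{\mc D}(\mc U)$, where $\mc D$ is a $G^{\#}$-graded division algebra and $\mc U$ is a graded right $\mc D$-module. Since the grading is odd, $\mc D\odd\ne 0$, so we may fix a nonzero $X\in\mc D_{(h,\bar 1)}$ for some $h\in G$. Starting from any homogeneous $\mc D$-basis of $\mc U$ and replacing each odd basis vector $v$ by $vX$ (still a basis, as $X$ is invertible) yields a homogeneous basis consisting entirely of even elements; their degrees then lie in $G\times\{\bar 0\}=G$ and assemble into the tuple $\gamma$ of Definition \ref{def:odd-grd-on-Mmn-1}.

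For the classification, an isomorphism of $G$-graded superalgebras is the same data as an isomorphism of $G^{\#}$-graded algebras, so Theorem \ref{thm:classification-matrix} gives that $\Gamma$ and $\Gamma'$ are isomorphic if and only if $T=T'$, $\beta=\beta'$, and there exists $(g,s)\in G^{\#}$ such that $(g,s)\,\Xi(\gamma)=\Xi(\gamma')$ as multisets in $G^{\#}/T$. The last step is to lift $(g,s)$ to $G$: setting $t_0:=(h,\bar 1)\in T$, the element $(g,s)\cdot t_0^{-s}=(gh^{-s},\bar 0)$ lies in $G\times\{\bar 0\}=G$, and since $t_0^{-s}\in T$ acts trivially on $G^{\#}/T$, the condition becomes $g'\,\Xi(\gamma)=\Xi(\gamma')$ with $g':=gh^{-s}\in G$. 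Under the identification $G^{\#}/T\iso G/(T\cap G)$ noted after Definition \ref{def:odd-grd-on-Mmn-1}, this is exactly the condition in the statement. Conversely, any such $g\in G$ yields the shift $(g,\bar 0)\in G^{\#}$ required by Theorem \ref{thm:classification-matrix}.

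The argument is almost routine once the correspondence between $G$-gradings on $M(m,n)$ and $G^{\#}$-gradings on $M_{m+n}(\FF)$ is in hand. Notably, no case-split analogous to $m=n$ versus $m\ne n$ arises: the analogue of the parity transpose is realized here as conjugation by an invertible odd element, which is now built into $\mc D$ and hence already accounted for. The only delicate point is the lift of the shift from $G^{\#}$ down to $G$, which works precisely because oddness of the grading means $T\not\subseteq G$.
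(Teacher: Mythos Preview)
Your proof is correct and follows essentially the same route as the paper: both regard the $G$-grading as a $G^{\#}$-grading on $M_{m+n}(\FF)$, invoke Theorem \ref{thm:classification-matrix}, and then use an element of $T$ with parity $\bar 1$ to reduce the shift $(g,s)\in G^{\#}$ to an element of $G$. The paper's version compresses your lifting step into the single observation ``since $T$ contains an element $t_1$ with $p(t_1)=\bar 1$, we may assume $s=\bar 0$,'' but the content is identical.
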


\begin{proof}
	We have already proved the first assertion. For the second assertion, 
	we again consider $\Gamma$ and $\Gamma'$ as $G^\#$-gradings and use Theorem \ref{thm:classification-matrix}: they are isomorphic if, and only if, $T=T'$, $\beta=\beta'$ and there is $(g,s)\in G^\#$ such that $(g,s)\Xi(\gamma)=\Xi(\gamma')$. 
	Since $T$ contains an element $t_1$ with $p(t_1) = \barr 1$, we may assume $s=\barr 0$.
\end{proof}

In Subsection \ref{subsec:odd-gradings}, we will show that odd gradings can exist only if $m=n$. It may be desirable to express the classification in terms of $G$ rather than $G^\#$ (as we did for even gradings). We will return to this in Subsection \ref{ssec:second-odd}.

\subsection{Even gradings and Morita context.}\label{subsec:even-gradings}

First we observe that every grading on $M(m,n)$ compatible with the $\ZZ$-superalgebra structure is an even grading. This follows from the fact that $T=\supp \D$ is a finite group, and if a finite group is contained in $G\times \ZZ$, then it must be contained in $G\times \{0\}$. Hence, when we look at the corresponding $(G\times\ZZ_2)$-grading, we have that $T\subseteq G$, so no element of $\D$ has an odd degree.

The converse is also true. Actually, we can prove a stronger assertion: if we write $\M(m,n)$ as in Equation \eqref{eq:End_U}, the subspaces given by each of the four blocks are graded. To capture this information, it is convenient to use the concepts of Morita context and Morita algebra.

Recall that a \emph{Morita context} is a sextuple $\mathcal{C} = (R, S, M, N, \vphi, \psi )$ where $R$ and $S$ are unital associative algebras, $M$ is an $(R,S)$-bimodule, $N$ is a $(S,R)$-bimodule and $\vphi: M\tensor_{S} N\rightarrow R$ and $\psi: N\tensor_{R} M\rightarrow S$ are bilinear maps satisfying the necessary and sufficient conditions for
\begin{equation*}
	C = \left(\begin{matrix}\label{eq:morita-algebra}
		R   &  M\\
		N   &  S\\
	\end{matrix}
	\right)
\end{equation*}
to be an associative algebra, \ie,
\[\vphi(m_1\tensor n_1)\cdot m_2 = m_1\cdot \psi(n_1\tensor m_2) \text{ and }\psi(n_1\tensor m_1)\cdot n_2 = n_1\cdot \vphi(m_1\tensor n_2)\]
\noindent for all $m_1,m_2\in M$ and $n_1,n_2\in N$.

We can associate a Morita context to a superspace $U = U\even \oplus U\odd$ by taking $R = \End(U\even)$, $S = \End(U\odd)$, $M = \Hom(U\odd, U\even)$, 
$N = \Hom (U\even, U\odd)$, with $\vphi$ and $\psi$ given by composition of operators.

Given an algebra $C$ as above and the idempotent 
$
\epsilon = \left(\begin{matrix}
1  &  0\\
0   &  0\\
\end{matrix}
\right)
$, 
we can recover all the data of the Morita context (up to isomorphism): $R \iso \epsilon C \epsilon$, $S \iso (1 - \epsilon) C (1 - \epsilon)$, $M \iso \epsilon C (1-\epsilon)$, $N \iso (1-\epsilon) C \epsilon$ and $\phi$ and $\psi$ are given by multiplication in $C$. In other words, the concept of Morita context is equivalent to the concept of \emph{Morita algebra}, which is a pair $(C,\epsilon)$ where $C$ is a unital associative algebra and $\epsilon\in C$ is an idempotent. 
For example, we may consider $\M(m,n)$ as a Morita algebra by fixing the idempotent 
$
\epsilon = \left(\begin{matrix}
I_m  &  0_{m\times n}\\
0_{n\times m}   &  0_n\\
\end{matrix}
\right)
$, 
\ie, $M(m,n)$ is the Morita algebra corresponding to the Morita context associated to the superspace $U = \FF^m \oplus \FF^n$.

\begin{defi}
	A Morita context $(R, S, M, N, \vphi, \psi )$ is said to be $G$-\emph{graded} if the algebras $R$ and $S$ are graded, the bimodules $M$ and $N$ are graded, and the maps $\vphi$ and $\psi$ are homogeneous of degree $e$. A Morita algebra $(C,\epsilon)$ is said to be $G$-\emph{graded} if $C$ is $G$-graded and $\epsilon$ is a homogeneous element (necessarily of degree $e$).
\end{defi}

Clearly, a Morita context is graded if, and only if, the corresponding Morita algebra is graded.

\begin{remark}\label{remarkk}
For every graded Morita algebra $(C,\epsilon)$, we can define a $\ZZ$-grading by taking $C^{-1} = \epsilon C (1-\epsilon )$, $C^0 = \epsilon C \epsilon \oplus (1-\epsilon)C(1-\epsilon)$ and $C^1=(1-\epsilon)C\epsilon$. In the case of $M(m,n)$, this is precisely the canonical $\ZZ$-grading.
\end{remark}

\begin{prop}\label{prop:3-equiv-even-morita-action}
		Let $\Gamma$ be a grading on the superalgebra $M(m,n)$. The following are equivalent:
		\begin{enumerate}[(i)]
			\item $\Gamma$ is compatible with the canonical $\ZZ$-grading;
			\item $\Gamma$ is even;
			\item $M(m,n)$ equipped with $\Gamma$ is a graded Morita algebra.
			\vspace{1mm}

		\par\vbox{\parbox[t]{\linewidth}{Further, if we assume $\Char\FF=0$, the above statements are also equivalent to:}}
		\vspace{1mm}

	 		\item $\Gamma$ corresponds to a $\widehat G$-action by even automorphisms.
		\end{enumerate}
\end{prop}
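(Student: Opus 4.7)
The plan is to establish the four equivalences by anchoring on the canonical idempotent $\epsilon \in M(m,n)$ and on the identification $M(m,n) \cong \End_\D(\mc U)$ from Theorem \ref{thm:End-over-D}, handling the pairs (i)$\Leftrightarrow$(iii), (ii)$\Leftrightarrow$(iii) and (i)$\Leftrightarrow$(iv) in turn.

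For (iii)$\Rightarrow$(i), if $\epsilon$ has degree $e$ then so does $1-\epsilon$, and each of the four blocks $\epsilon M(m,n) \epsilon$, $\epsilon M(m,n)(1-\epsilon)$, $(1-\epsilon) M(m,n)\epsilon$ and $(1-\epsilon) M(m,n)(1-\epsilon)$ is the image of a graded map of degree $e$, hence a graded subspace; these are exactly the components of the canonical $\ZZ$-grading. Conversely, for (i)$\Rightarrow$(iii), the product $M(m,n)^{-1}M(m,n)^{1}$ equals $\End(U\even)$ and is a graded subspace of $M(m,n)^0$ that is moreover closed under multiplication, i.e., a graded unital subalgebra; since the identity of any graded unital subalgebra is forced to be homogeneous of degree $e$, the element $\epsilon$ must be of degree $e$. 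The equivalence (i)$\Leftrightarrow$(iv) in characteristic $0$ is then immediate from Remark \ref{rmk:Aut-ZZ-superalgebra}: the $\widehat G$-action is by automorphisms of the $\ZZ$-superalgebra structure if and only if each character preserves the canonical $\ZZ$-grading, and such automorphisms are precisely the elements of $\mc E$.

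The more delicate step is (ii)$\Leftrightarrow$(iii). Fix an isomorphism $\phi\colon M(m,n) \to \End_\D(\mc U)$ of $G^\#$-graded algebras. For (ii)$\Rightarrow$(iii), note that $\epsilon$ lies in $Z(M(m,n)\even)$, so $\phi(\epsilon)$ lies in $Z(\End_\D(\mc U)\even)$. When $\D\odd = 0$, the subspaces $\mc U\even$ and $\mc U\odd$ are graded $\D$-submodules, giving $\End_\D(\mc U)\even = \End_\D(\mc U\even) \oplus \End_\D(\mc U\odd)$; this is a direct sum of two central simple $\FF$-algebras whose center is two-dimensional and spanned by the identity elements of the two summands, both of degree $e$. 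Hence $\phi(\epsilon)$ is in degree $e$. For the converse (iii)$\Rightarrow$(ii), the element $\phi(\epsilon)$ is an even homogeneous idempotent of degree $e$ with $\FF$-rank $m$, so its image $\mc V := \phi(\epsilon)(\mc U)$ is a graded $\D$-submodule of $\mc U$. On the other hand, $U$ and $\mc U$ are isomorphic as simple graded $M(m,n)$-modules up to a single shift $g_0 \in G^\#$, via an $\FF$-linear bijection $\sigma$ that intertwines the two actions (with $M(m,n)$ acting on $\mc U$ through $\phi$). Tracing $\epsilon$ through $\sigma$ shows $\mc V = \sigma(U\even)$, which is $\mc U\even$ if $p(g_0) = \barr 0$ and $\mc U\odd$ otherwise. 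Either way $\mc V$ is a pure-parity $\D$-submodule, forcing $\D\odd = 0$: for any nonzero $d \in \D\odd$ (necessarily invertible) the set $\mc V \cdot d$ would have parity opposite to $\mc V$ while still being contained in $\mc V$, which is impossible for $m>0$ (the case $m=0$ being trivial).

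I expect the main obstacle to be the direction (iii)$\Rightarrow$(ii): tracking $\phi(\epsilon)$ across the module isomorphism $\sigma$ and invoking the up-to-shift uniqueness of the simple graded module to conclude that the image of $\phi(\epsilon)$ has pure parity is the critical subtlety. The remaining ingredients --- the identity-element-of-a-graded-subalgebra fact, the center computation, and the appeal to Remark \ref{rmk:Aut-ZZ-superalgebra} --- are then routine.
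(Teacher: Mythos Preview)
Your argument for (iii)$\Rightarrow$(ii) has a genuine gap. You claim that $U$ and $\mc U$ are isomorphic as graded $M(m,n)$-modules up to a shift, via an $\FF$-linear bijection $\sigma$. But $\dim_\FF U = m+n$ while $\dim_\FF \mc U = k\,|T| = (m+n)\sqrt{|T|}$, so no such bijection exists unless $|T|=1$. In fact, when $|T|>1$ the superspace $U$ cannot be given any $G^\#$-grading making it a graded $M(m,n)$-module: the graded-simple modules over $\End_\D(\mc U)$ are exactly the shifts of $\mc U$, all of $\FF$-dimension $(m+n)\sqrt{|T|}$. So the whole mechanism of transporting $\phi(\epsilon)$ back to $U$ via $\sigma$ collapses, and with it your conclusion that $\mc V=\phi(\epsilon)(\mc U)$ has pure parity.

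The paper sidesteps this entirely: it proves the cycle (i)$\Rightarrow$(ii)$\Rightarrow$(iii)$\Rightarrow$(i), never attempting (iii)$\Rightarrow$(ii) directly. The step (i)$\Rightarrow$(ii) is a one-line observation you did not use: if $\Gamma$ is compatible with the canonical $\ZZ$-grading, then it can be regarded as a $G\times\ZZ$-grading on the algebra $M_{m+n}(\FF)$, and since $T=\supp\D$ is finite it must lie in $G\times\{0\}$; passing to $G^\#=G\times\ZZ_2$ then gives $T\subseteq G\times\{\bar 0\}$, so $\D\odd=0$. Since you already have (iii)$\Rightarrow$(i), inserting this gives (iii)$\Rightarrow$(ii) for free and closes the equivalence. (Alternatively, a direct center computation works: in the odd case the degree-$e$ part of $Z\big(\End_\D(\mc U)\even\big)$ is spanned by $1$ alone, since the center is $\FF 1\oplus\FF d_0$ with $\deg d_0=t_0\ne e$; so a nontrivial homogeneous central idempotent of degree $e$ cannot exist, contradicting the homogeneity of $\epsilon$.)

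Your remaining steps are fine, and your (i)$\Rightarrow$(iii) via ``$M(m,n)^{-1}M(m,n)^1=\End(U\even)$ is a graded unital subalgebra, hence its unit $\epsilon$ has degree $e$'' is a pleasant shortcut compared to the paper's route (i)$\Rightarrow$(ii)$\Rightarrow$(iii). The arguments for (ii)$\Rightarrow$(iii) and (i)$\Leftrightarrow$(iv) match the paper's.
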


\begin{proof}
		 ~\\ \vspace{-2.5mm}

		\textit{(i) $\Rightarrow$ (ii):}
		See the beginning of this subsection.
		\vspace{2mm}

		\textit{(ii) $\Rightarrow$ (iii):}
		Regard $\Gamma$ as a $G^\#$-grading. By Theorem \ref{thm:End-over-D}, there is a graded division algebra $\D$ and a graded right $\D$-module $\mc U$ such that $\End_{\mc D} (\mc U) \simeq M(m,n)$. Take an isomorphism of graded algebras $\phi: \End_{\mc D} (\mc U) \rightarrow M(m,n)$. Since $\Gamma$ is even, $\mc U\even$ and $\mc U\odd$ are graded $\mc D$-submodules. Take $\epsilon' \in \End_{\mc D} (\mc U)$ to be the projection onto $\mc U\even$ associated to the decomposition $\mc U= \mc U\even \oplus \mc U\odd$. Clearly, $\epsilon'$ is a central idempotent of $\End_{\mc D} (\mc U)\even$, hence $\phi(\epsilon')$ is a central idempotent of $M(m,n)\even$, so either $\phi(\epsilon')=\epsilon$ or $\phi(\epsilon')=1-\epsilon$. Either way, $\phi^{-1}(\epsilon)$ is homogeneous, hence so is $\epsilon$.
		\vspace{2mm}

		\textit{(iii) $\Rightarrow$ (i):}
		Follows from Remark \ref{remarkk}.
		\vspace{2mm}

		\textit{(i) $\Leftrightarrow$ (iv):}
		This follows from the fact that the group of even automorphisms is precisely the group of automorphisms of the $\ZZ$-superalgebra structure on $M(m,n)$ (see Remark \ref{rmk:Aut-ZZ-superalgebra}).
\end{proof}

\begin{remark}
	It follows from Proposition \ref{prop:3-equiv-even-morita-action} that, if $\Char \FF = 0$, odd gradings exist only if $m=n$. In Subsection \ref{subsec:odd-gradings} we will give a characteristic-independent proof of this fact.
\end{remark}

We now know that the gradings on the $\ZZ$-superalgebra $M(m,n)$ are precisely the even gradings, but since the automorphism group is different
from the $\ZZ_2$-superalgebra case, the classification of gradings up to isomorphism is also different. 
The proof of the next result is similar to the proof of Theorem \ref{thm:even-assc-iso}.

\begin{thm}
	Let $\Gamma(T,\beta,\gamma_0,\gamma_1)$ and $\Gamma'(T',\beta',\gamma_0',\gamma_1')$ be $G$-gradings on the $\ZZ$-superalgebra $M(m,n)$. Then $\Gamma$ and $\Gamma'$ are isomorphic if, and only if, $T=T'$, $\beta=\beta'$, and there is $g\in G$ such that $g\Xi (\gamma_i) = \Xi (\gamma_i')$ for $i=0,1$. \qed
\end{thm}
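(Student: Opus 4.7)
The plan is to reduce the claim to Theorem \ref{thm:even-assc-iso} and then rule out the ``swap'' option (ii) of that theorem by invoking the fact that a $\ZZ$-superalgebra isomorphism must be an even automorphism.

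First, by Proposition \ref{prop:3-equiv-even-morita-action}, every $G$-grading on the $\ZZ$-superalgebra $M(m,n)$ is even, so both $\Gamma$ and $\Gamma'$ admit parametrizations of the form $\Gamma(T,\beta,\gamma_0,\gamma_1)$ as in Definition \ref{def:even-grd-on-Mmn}. Moreover, by Remark \ref{rmk:Aut-ZZ-superalgebra}, the automorphism group of the $\ZZ$-superalgebra structure on $M(m,n)$ is precisely the group $\mc E$ of even automorphisms; a direct calculation shows that every element of $\mc E$ fixes the Morita idempotent $\epsilon$ corresponding to the projection onto $U\even$.

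For the ``if'' direction, given $T=T'$, $\beta=\beta'$ and $g\in G$ with $g\Xi(\gamma_i)=\Xi(\gamma_i')$ for $i=0,1$, I would carry out the standard matching of homogeneous $\D$-bases from the proof of Theorem \ref{thm:classification-matrix}, but separately on the even and odd parts of the modules: pair a homogeneous $\D$-basis of $\mc U^{[g]}\even$ with one of $\mc U'\even$, and similarly for the odd parts. This yields a $\D$-linear isomorphism $\psi:\mc U^{[g]}\to \mc U'$ that respects the $\ZZ_2$-decomposition, so the induced conjugation is an even automorphism carrying $\Gamma$ to $\Gamma'$.

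For the ``only if'' direction, any isomorphism $\phi$ of $G$-graded $\ZZ$-super\-al\-ge\-bras is in particular an isomorphism of $G$-graded superalgebras, so Theorem \ref{thm:even-assc-iso} yields $T=T'$, $\beta=\beta'$, and either condition (i) or (when $m=n$) condition (ii). By Proposition \ref{prop:inner-automorphism}, $\phi$ is conjugation by some homogeneous $\D$-linear isomorphism $\psi:\mc U\to \mc U'$. Since $\phi$ lies in $\mc E$ and therefore fixes $\epsilon$, the map $\psi$ must send $\mc U\even$ to $\mc U'\even$ and $\mc U\odd$ to $\mc U'\odd$, which matches $\gamma_0$ with $\gamma_0'$ and $\gamma_1$ with $\gamma_1'$ (condition (i)) and rules out the swap case (ii). The main subtlety is precisely this last step: verifying that condition (ii) would force $\psi$ to interchange $\mc U\even$ and $\mc U\odd$, producing an \emph{odd} (rather than even) automorphism of $M(m,n)$ that is incompatible with the $\ZZ$-grading.
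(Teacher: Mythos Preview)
Your argument is correct, but it takes a slightly different route from what the paper intends. The paper does not give a detailed proof; it only says the argument is ``similar to the proof of Theorem~\ref{thm:even-assc-iso}'' and marks the statement with \qed. The intended adaptation is to re-run that proof with $G\times\ZZ$ in place of $G^\#=G\times\ZZ_2$: regard $\Gamma$ and $\Gamma'$ as $(G\times\ZZ)$-gradings on the algebra $M_{m+n}(\FF)$, placing the entries of $\gamma_0$ at level $0$ and those of $\gamma_1$ at level $1$, and apply Theorem~\ref{thm:classification-matrix}. The resulting shift element $(g,s)\in G\times\ZZ$ must send the multiset supported on levels $\{0,1\}$ to itself, and since no nonzero integer $s$ can permute $\{0,1\}$ in $\ZZ$, one is forced to have $s=0$ regardless of whether $m=n$. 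Thus the swap case simply never appears.

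Your approach instead treats Theorem~\ref{thm:even-assc-iso} as a black box and then eliminates the swap option afterwards, using Remark~\ref{rmk:Aut-ZZ-superalgebra} (the $\ZZ$-superalgebra automorphism group is $\mc E$) together with Proposition~\ref{prop:inner-automorphism} to see that the conjugating map $\psi$ must preserve the even/odd decomposition of $\mc U$. This is perfectly valid and has the virtue of reusing the already-proven $\ZZ_2$ classification, but it is a bit more roundabout: you pass through $G\times\ZZ_2$, obtain a spurious case, and then invoke extra structural facts to remove it, whereas the paper's direct $G\times\ZZ$ argument avoids generating that case in the first place. The one point you flag as a ``subtlety'' is not really one: $\phi\in\mc E$ fixes $\epsilon$ by direct computation, and under the identifications of Definition~\ref{def:even-grd-on-Mmn} this immediately forces $\psi(\mc U\even)=\mc U'{}^{\bar 0}$, so the $G^\#$-degree of $\psi$ lies in $G\times\{\bar 0\}$.
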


As we mentioned in Subsection \ref{subsec:graded-bimodules}, we can always shift the grading on a graded (bi)module and still have a graded (bi)module.
In a graded Morita context, as in the case of a graded superalgebra (see Lemma \ref{lemma:opposite-directions}), we have more structure to preserve: if we shift one of the bimodules by an element $g\in G$ and at least one of the bilinear maps is nonzero, then we are forced to shift the other bimodule by $g^{-1}$. As in the  superalgebra case, we will refer to this situation as \emph{shift in opposite directions}.

\begin{thm}\label{thm:graded-morita}
	Let $\mathcal{C}=(R, S, M, N, \vphi, \psi )$ be the Morita context associated with a superspace $U$ and fix gradings on $R$ and $S$ making them graded algebras. The bimodules $M$ and $N$ admit $G$-gradings so that $\mathcal{C}$ becomes a graded Morita context if, and only if, there exists a graded division algebra $\D$ and graded right $\D$-modules $\mc V$ and $\mc W$ such that $R\iso \End_{\D}(\mc V)$ and $S\iso \End_{\D} (\mc W)$ as graded algebras. 
	Moreover, all such gradings on $M$ and $N$ have the form $M\iso \Hom_{\D}(\mc W, \mc V)^{[g]}$ and $N\iso \Hom_{\D}(\mc V, \mc W)^{[g^{-1}]}$ 
	as graded bimodules, where $g\in G$ is arbitrary.
\end{thm}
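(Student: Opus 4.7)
The plan is to exploit the equivalence between graded Morita contexts and graded Morita algebras to convert the statement into one about a single graded algebra, then invoke Theorem \ref{thm:End-over-D}. For the forward direction, given gradings on $M$ and $N$ making $\mathcal{C}$ a graded Morita context, the Morita algebra $C = \left(\begin{smallmatrix} R & M \\ N & S \end{smallmatrix}\right)$ is a $G$-graded associative algebra with $\epsilon$ a homogeneous idempotent of degree $e$. Since $C$ is ungradedly isomorphic to $M_{m+n}(\FF)$, it is simple as an ungraded algebra and hence has no nontrivial graded ideals, so Theorem \ref{thm:End-over-D} supplies a graded division algebra $\D$ and a graded right $\D$-module $\mc U$ with an isomorphism $\phi: C \to \End_\D(\mc U)$ of graded algebras. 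Transporting $\epsilon$ yields a homogeneous idempotent of degree $e$ in $\End_\D(\mc U)$, which is necessarily the projection onto a graded $\D$-submodule $\mc V$ with graded complement $\mc W$; then $R \iso \epsilon C\epsilon \iso \End_\D(\mc V)$ and $S \iso (1-\epsilon)C(1-\epsilon) \iso \End_\D(\mc W)$, both as graded algebras.

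For the converse, I would start from $\D$, $\mc V$, $\mc W$ and the fixed graded isomorphisms $R \iso \End_\D(\mc V)$ and $S \iso \End_\D(\mc W)$, and consider $\mc U := \mc V \oplus \mc W$. Then $\End_\D(\mc U)$ is a graded algebra containing the homogeneous idempotent $\epsilon$ (projection onto $\mc V$) of degree $e$, and its four block-corners give canonical graded bimodule structures on $\Hom_\D(\mc W, \mc V)$ and $\Hom_\D(\mc V, \mc W)$ over the appropriate endomorphism algebras, with the Morita maps $\vphi$ and $\psi$ being composition of operators and hence homogeneous of degree $e$. Pulling these structures back via the fixed graded isomorphisms and identifying $M$ and $N$ with $\Hom_\D(\mc W, \mc V)$ and $\Hom_\D(\mc V, \mc W)$ respectively equips $M$ and $N$ with the desired graded bimodule structures making $\mathcal{C}$ a graded Morita context.

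For the parametrization of all possible compatible gradings, observe that $M = \Hom(U\odd, U\even)$ is ungradedly a simple $(R,S)$-bimodule, so Lemma \ref{lemma:simplebimodule} implies that any two $G$-gradings on $M$ making it a graded bimodule differ by a shift; hence every admissible grading on $M$ has the form $\Hom_\D(\mc W, \mc V)^{[g]}$ for some $g \in G$, and similarly every admissible grading on $N$ has the form $\Hom_\D(\mc V, \mc W)^{[h]}$ for some $h \in G$. The homogeneity of $\vphi$ and $\psi$ of degree $e$, together with the fact that their images $MN$ and $NM$ are nonzero in $R$ and $S$, then forces $h = g^{-1}$ by the same ``opposite directions'' argument as Lemma \ref{lemma:opposite-directions}.

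The main obstacle I anticipate is the bookkeeping in the converse direction: verifying that the graded bimodule structures on the $\Hom$ spaces really do transport through the fixed graded isomorphisms $R \iso \End_\D(\mc V)$ and $S \iso \End_\D(\mc W)$ in a way compatible with the original Morita multiplication on $C$, rather than up to some ambiguity. Proposition \ref{prop:inner-automorphism} should let me absorb any such ambiguity into an inner automorphism, reducing to the canonical identification, but care is needed to ensure the identifications of $M$ and $N$ are consistent with the same $\psi$ (i.e., with the correctly-paired shifts rather than two independent ones).
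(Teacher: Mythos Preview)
Your proposal is correct and follows essentially the same route as the paper. The forward direction and the uniqueness-up-to-opposite-shifts argument are identical to the paper's. For the converse, the paper makes concrete exactly the step you flag as the obstacle: it writes $\theta_1: R\to\End_\D(\mc V)$ and $\theta_2: S\to\End_\D(\mc W)$ in matrix form, so by Skolem--Noether they are conjugations by some $x\in\GL(m)$ and $y\in\GL(n)$, and then conjugation by the block-diagonal matrix $\left(\begin{smallmatrix} x & 0\\ 0 & y\end{smallmatrix}\right)$ gives a single \emph{algebra} isomorphism $C\to\End_\D(\mc V\oplus\mc W)$ restricting to $\theta_1,\theta_2$ on the corners; this is precisely the ``absorb into an inner automorphism'' move you anticipated, and it guarantees compatibility with $\vphi,\psi$ for free.
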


\begin{proof}
	Suppose $M$ and $N$ admit $G$-gradings so that the Morita algebra $(C, \epsilon)$ associated to $\mc C$ becomes $G$-graded. By Theorem \ref{thm:End-over-D} there exists a graded division algebra $\D$ and a graded $\D$-module $\mc U$ such that $C \iso \End_{\D} (\mc U)$. Denote the image of $\epsilon$ under this isomorphism by $\epsilon'$ and let $\mc V = \epsilon'(\mc U)$ and $\mc W = (1 -\epsilon')(\mc U)$. Since $\epsilon$ is homogeneous, so is $\epsilon'$, hence $\mc V$ and $\mc W$ are graded $\D$-modules. It follows that $R \iso \epsilon M(m,n) \epsilon \iso \epsilon' \End_{\D} (\mc U) \epsilon' \iso \End_{\D} (\mc V)$ and, analogously, $S \iso \End_{\D} (\mc W)$.

	For the converse, write $C$	in matrix form by fixing a basis in $U$ and identify $\End_{\D} (\mc V)$ and $\End_{\D} (\mc W)$ with matrix algebras as in Definition \ref{def:explicit-grd-assoc}. Suppose there exist isomorphisms of graded algebras $\theta_1 \colon R\rightarrow \End_{\D} (\mc V) $ and $\theta_2 \colon S\rightarrow \End_{\D} (\mc W)$. Then there are $x\in \GL (m)$ and $y\in \GL (n)$ such that $\theta_1$ is the conjugation by $x$ and $\theta_2$ is the conjugation by $y$. It follows that the conjugation by
	$\begin{pmatrix}
		x & 0\\
		0 & y
	\end{pmatrix}$
	\noindent is an isomorphism of algebras between $C\iso M(m,n)$ and
	\[\End_{\D} (\mc V \oplus \mc W) =
	\begin{pmatrix}
		\End_\D(\mc V) & \Hom_\D(\mc W, \mc V)\\
		\Hom_\D(\mc V, \mc W) & \End_\D(\mc W)
	\end{pmatrix},\]
	\noindent hence we transport the gradings on $\Hom_\D(\mc W, \mc V)$ and $\Hom_\D(\mc V, \mc W)$ to	$M$ and $N$, respectively.

	It remains to prove that the gradings on $M$ and $N$ are determined up to shift in opposite directions. Since in our case the Morita algebra $C$ is simple, $M$ and $N$ are simple bimodules.
	By Lemma \ref{lemma:simplebimodule}, the gradings on $M$ and $N$ are determined up to shifts, and the shifts have to be in opposite directions in order for $\vphi$ and $\psi$ to be degree-preserving.
\end{proof}

\subsection{Odd gradings}\label{subsec:odd-gradings}
Let $\Gamma$ be an odd grading on $M(m,n)$. We saw in Subsection \ref{ssec:grds-on-superalgebras} that, as a $G^\#$-graded algebra, $M(m,n)$ is isomorphic to $E\iso \End(\tilde U)\tensor \D$ where the first factor has the trivial $\ZZ_2$-grading and $\D=\D\even\oplus \D\odd$, with $\D\odd\neq 0$, is a $G^\#$-graded division algebra that is simple as an algebra. Let $T\subseteq G^\#$ be the support of $\D$ and $\beta: T\times T \rightarrow \FF^\times$ be the associated bicharacter. 
We write $T^+ = \{t\in T \mid p(t)=\barr 0\} = T \cap G$ and $T^- = \{t\in T \mid p(t)=\barr 1\}$, 
and denote the restriction of $\beta$ to $T^+\times T^+$ by $\beta^+$.

Note that there are no odd gradings if $\Char \FF =2$. Indeed, in this case, there is no nondegenerate bicharacter on $T$ because the characteristic of the field divides $|T|=2|T^+|$. From now on, we suppose $\Char \FF \neq 2$.

For a subgroup $A\subseteq T$, we denote by $A'$ its orthogonal complement in $T$ with respect to $\beta$, i.e., $A' = \{t\in T\mid \beta(t, A) =1\}$. 
This is the inverse image of the subgroup $A^\perp\subseteq \widehat T$ under the isomorphism $T\rightarrow \widehat T$ given by $t\mapsto \beta(t,\cdot)$. 
In particular, $|A'| = [T:A]$.

From these considerations, we have $(T^+)' = \langle t_0 \rangle$ where $t_0$ is an element of order 2. It follows that $\beta(t_0, t) = 1$ if $t\in T^+$ and $\beta(t_0, t) = -1$ if $t\in T^-$. For this reason, we call $t_0$ the \emph{parity element} of the odd grading $\Gamma$. Note that $\rad \beta^+ = T^+\cap (T^+)' = \langle t_0 \rangle$.

Fix an element $0\neq d_0\in \D$  of degree $t_0$. By the definition of $\beta$, $d_0$ commutes with all elements of $\D\even$ and anticommutes with all elements of $\D\odd$. Since $d_0^2\in \D_e = \FF$, we may rescale $d_0$ so that $d_0^2=1$. Then $\epsilon := \frac{1}{2}(1+d_0)$ is a central idempotent of $\D\even$. Take a homogeneous element $0\neq d_1\in\D\odd$. Then $d_1\epsilon d_1\inv = \frac{1}{2}(1-d_0)=1-\epsilon$, which is another central idempotent of $\D\even$ and must have the same rank as $\epsilon$. 
Hence, $\D\even\iso \epsilon\D\even\oplus (1-\epsilon)\D\even$ (direct sum of ideals) and, consequently, $E\even \iso \End(\tilde U)\tensor \D\even = \End(\tilde U)\tensor \epsilon\D\even \oplus \End(\tilde U)\tensor (1-\epsilon)\D\even$, where the two summands have the same dimension. Therefore, odd gradings exist only if $m=n$. 
Also note that we have 
\begin{equation}\label{eq:D1eps}
\D\odd \epsilon = (1-\epsilon) \D\odd.
\end{equation}

We are now going construct an even grading by coarsening a given odd grading. The reverse of this construction will be used in Subsection \ref{ssec:second-odd}.

Let $H$ be a group and suppose we have an even grading $\Gamma'$ on $M(n,n)$ that is the coarsening of $\Gamma$ induced by a group homomorphism $\alpha: G\rightarrow H$. Since $\Gamma'$ is even, then the idempotent $\id_{\tilde U}\tensor\epsilon$ must be homogeneous with respect to $\Gamma'$. This means that $\alpha(t_0)=e$, so $\alpha$ factors through $\barr G := G/\langle t_0 \rangle$. This motivates the following definition:

\begin{defi}
	Let $\Gamma$ be an odd $G$-grading on $M(n,n)$ with parity element $t_0$. The \emph{finest even coarsening of $\Gamma$} is the $\barr G$-grading ${}^\theta \Gamma$, where $\barr G := G/\langle t_0 \rangle$ and $\theta: G \to \barr G$ is the natural homomorphism.
\end{defi}

\begin{thm}
	Let $\Gamma = \Gamma(T, \beta, \gamma)$ be an odd grading on $M(n,n)$ with parity element $t_0$. Then its finest even coarsening is isomorphic to $\barr \Gamma = \Gamma(\barr T, \barr \beta, \barr \gamma, \barr u\barr \gamma)$, where $\barr T= \frac{T^+}{\langle t_0 \rangle}$, $\barr\beta$ is the nondegenerate bicharacter on $\barr T$ induced by $\beta^+$, $\barr\gamma$ is the tuple whose entries are the images of the entries of $\gamma$ under $\theta$, and $u \in G$ is 
	any element such that $(u, \barr 1) \in T^-$.
\end{thm}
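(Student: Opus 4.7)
The plan is to realize both the coarsening ${}^\theta \Gamma$ and the even grading $\barr \Gamma$ as $\barr G$-graded endomorphism algebras over the same graded division algebra, building on the existing realization $M(n,n) \cong \End(\tilde U) \otimes \D$. The central object is the idempotent $\epsilon = \frac{1}{2}(1+d_0) \in \D\even$: although $\epsilon$ is not $G$-homogeneous, after coarsening by $\theta$ it becomes homogeneous of degree $e$, because $\theta(t_0) = e$. So first I would introduce $\barr \D := \epsilon \D\even$, viewed as a $\barr G$-graded algebra, and show that it is the graded division algebra associated with $(\barr T, \barr \beta)$. This is a formal consequence of $t_0 \in \rad \beta^+$ together with the identity $X_{tt_0} = d_0 X_t$ (up to a scalar), which imply that $\epsilon$ is central in $\D\even$ and that the nonzero $\barr G$-homogeneous components of $\barr \D$ are one-dimensional, indexed by $\barr T$, with induced bicharacter $\barr \beta$.

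Next I would exhibit the appropriate graded $\barr \D$-module. The candidate is $\barr V := (\tilde U \otimes \D)\epsilon$, equipped with the right $\barr \D$-action and the $\ZZ_2$-decomposition
\[
\barr V\even = \tilde U \otimes \epsilon \D\even, \qquad \barr V\odd = \tilde U \otimes (1-\epsilon)\D\odd,
\]
where the identification of $\barr V\odd$ uses Equation \eqref{eq:D1eps}. Picking any nonzero $d_1 \in \D\odd$ of degree $(u, \barr 1) \in T^-$ yields $\barr \D$-bases $\{v_i \otimes \epsilon\}_{i=1}^k$ of $\barr V\even$ and $\{v_i \otimes (1-\epsilon)d_1\}_{i=1}^k$ of $\barr V\odd$, and a routine degree count in $\barr G \times \ZZ_2$ shows these bases have degree tuples $\barr \gamma$ and $\barr u \barr \gamma$, respectively. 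The natural left action of $\M = \End(\tilde U) \otimes \D$ on $\tilde U \otimes \D$ commutes with right multiplication by $\epsilon$, hence restricts to $\barr V$ and respects both the $\barr G$-grading and the $\ZZ_2$-decomposition. Simplicity of $\M$ together with a dimension count then forces the resulting map $\M \to \End_{\barr \D}(\barr V)$ to be an isomorphism of $\barr G \times \ZZ_2$-graded algebras, and by Definition \ref{def:even-grd-on-Mmn} the right-hand side is precisely $\Gamma(\barr T, \barr \beta, \barr \gamma, \barr u \barr \gamma)$.

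The main obstacle is the first step: constructing $\barr \D$ and verifying that it has the correct support and bicharacter, since $\epsilon$ only acquires a homogeneous degree after the coarsening is applied, so one must work with two different grading groups simultaneously. Once this is in place, the remainder is essentially a direct transcription of the standard $\End_{\D}(\mc V)$ model into the coarsened setting. The apparent freedom to choose $d_1$ (and hence $u$) in the definition of $\barr V\odd$ matches the corresponding freedom in the statement of the theorem: any two choices of $d_1$ differ by right multiplication by an invertible element of $\barr \D$, which only rescales the basis of $\barr V\odd$ and leaves its graded isomorphism class unchanged.
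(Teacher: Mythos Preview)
Your proposal is correct and follows essentially the same approach as the paper: both use the idempotent $\epsilon$ (homogeneous only after coarsening) to build $\barr\D$ and then realize the coarsened algebra as $\End_{\barr\D}$ of the module $(\tilde U\otimes\D)\epsilon$, with the same homogeneous basis $\{v_i\otimes\epsilon,\ v_i\otimes d_1\epsilon\}$ (note $(1-\epsilon)d_1=d_1\epsilon$). The only difference is packaging: the paper first treats $\D$ in isolation---proving $\D\epsilon$ is graded-simple, obtaining $\barr\D=\epsilon\D\epsilon$ via Schur's Lemma, and invoking the graded Density Theorem---whereas you work with the full module from the outset and replace the Density Theorem by simplicity of $\M$ together with a dimension count.
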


\begin{proof}
	Let us focus our attention on the $G$-graded division algebra $\D$. We now consider it as a $\barr G$-graded algebra, which has a decomposition $\D=\D\epsilon \oplus \D(1-\epsilon)$ as a graded left module over itself.

	\setcounter{claim}{0}
	\begin{claim}
		The $\D$-module $\D\epsilon$ is simple as a graded module.
	\end{claim}

	To see this, consider a nontrivial graded submodule $V\subseteq \D\epsilon$ and take a homogeneous element $0\neq v\in V$. Then we can write $v=d\epsilon$ where $d$ is a $\barr G$-homogeneous element of $\D$, so $d = d' + \lambda d' d_0$ where $d'$ is a $G$-homogeneous element and $\lambda\in \FF$. Hence, $v = d'\epsilon + \lambda d'd_0\epsilon = (1+\lambda)d'\epsilon$. Clearly, $(1+\lambda)d'\neq 0$, so it has an inverse in $\D$. We conclude that $\epsilon\in V$, hence $V=\D\epsilon$.\qedclaim 

	Let $\barr \D := \epsilon \D \epsilon \iso \End_{\D}(\D\epsilon)$, where we are using the convention of writing endomorphisms of a left module on the right. By Claim 1 and the graded analog of Schur's Lemma (see \eg \cite[Lemma 2.4]{livromicha}), $\barr \D$ is a $\barr G$-graded division algebra.

	\begin{claim}
		The support of $\barr \D$ is $\barr T= \frac{T^+}{\langle t_0 \rangle}$ and the bicharacter $\barr \beta: \barr T\times \barr T\rightarrow \FF^\times$ is induced by $\beta^+: T^+\times T^+ \rightarrow \FF^\times$.
	\end{claim}

	We have $\barr \D = \epsilon \D\even \epsilon + \epsilon \D\odd \epsilon$ and $\epsilon \D\odd \epsilon = 0$ by Equation \eqref{eq:D1eps}, so $\supp \barr \D \subseteq \barr T$. On the other hand, for every $0\neq d\in \D\even$ with $G$-degree $t\in T^+$, we have that $\epsilon d\epsilon = d\epsilon = \frac{1}{2}(d+dd_0)\neq 0$, since the component of degree $t$ is different from zero. Hence $\supp \barr \D = \barr T$. Since $\epsilon$ is central in $\D\even$, we obtain $\barr\beta (\barr t,\barr s) = \beta (s, t) = \beta^+ (s, t)$ for all $t, s\in T^+$.\qedclaim 

	We now consider $\D\epsilon$ as a graded right $\barr \D$-module. Then we have the decomposition $\D\epsilon = \epsilon \D\epsilon \oplus (1-\epsilon) \D\epsilon$. The set $\{\epsilon\}$ is clearly a basis of $\epsilon \D\epsilon$. To find a basis for $(1-\epsilon)\D\epsilon$, fix any $G$-homogeneous $0\neq d_1\in \D\odd$  with $\deg d_1 = t_1\in T^-$. Then we have $(1-\epsilon)\D\epsilon = (1-\epsilon)\D\even \epsilon + (1-\epsilon)\D\odd \epsilon = (1-\epsilon)\D\odd \epsilon = \D\odd \epsilon$ by Equation \eqref{eq:D1eps}. Since $d_1$ is invertible, $\{d_1\epsilon\}$ is a basis for $(1-\epsilon) \D\epsilon$. We conclude that $\{\epsilon, d_1\epsilon\}$ is a basis for $\D\epsilon$.

	Using the graded analog of the Density Theorem (see e.g. \cite[Theorem 2.5]{livromicha}), we have $\D\iso \End_{\barr \D}(\D\epsilon)\iso \End(\FF\epsilon\oplus \FF d_1\epsilon)\tensor \barr\D$. Hence,
	\[
	\begin{split}
	\End_\D(\mc U)&\iso\End (\tilde U) \tensor \D \iso \End (\tilde U) \tensor \End(\FF\epsilon \oplus \FF d_1\epsilon) \tensor \barr\D \\
	&\iso \End(\tilde U\tensor \epsilon \oplus \tilde U\tensor d_1\epsilon) \tensor \barr\D
	\end{split}
	\]
	as $\barr G$-graded algebras. The result follows.
\end{proof}

In the next section, we will show how to recover $\Gamma$ from $\barr\Gamma$ and some extra data. The following definition and result will be used there.

\begin{defi}
	For every abelian group $A$ we put $A^{[2]} = \{a^2 \mid a\in A\}$ and $A_{[2]} = \{a\in A \mid a^2 = e \}$.
\end{defi}

Note that $T^{[2]}\subseteq T^+$, but $T^{[2]}$ can be larger than $(T^+)^{[2]}$ since it also includes the squares of elements of $T^-$. Also, the subgroup $\barr S = \{\barr t \in \barr T \mid t \in T^{[2]}\}$ of $\barr T$ can be larger than $\barr T^{[2]}$, but we will show that, surprisingly, it does not depend on $T^-$.

\begin{lemma}\label{lemma:square-subgroup}
	Let $\theta: T^+\rightarrow \barr T=\frac{T^+}{\langle t_0 \rangle}$ be the natural homomorphism. 
	Consider the subgroups $\barr S = \theta(T^{[2]})$ and $\barr R=\theta(T^+_{[2]})$ of $\barr T$. 
	Then $\barr S$ is the orthogonal complement of $\barr R$ with respect to the nondegenerate bicharacter $\barr\beta$.
\end{lemma}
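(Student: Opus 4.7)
The plan is to prove one containment directly, then obtain the reverse via a counting argument using the nondegeneracy of $\barr\beta$ on $\barr T$.

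For the easy direction $\barr S \subseteq \barr R^\perp$, take $\barr s = \theta(s^2) \in \barr S$ with $s \in T$ (note $s^2 \in T^+$ automatically) and $\barr r = \theta(r) \in \barr R$ with $r \in T^+_{[2]}$. Since $\barr\beta$ is induced from $\beta^+$ and $\beta$ is a bicharacter,
\[
\barr\beta(\barr s, \barr r) \,=\, \beta(s^2, r) \,=\, \beta(s, r^2) \,=\, 1
\]
because $r^2 = e$. Hence $\barr s \in \barr R^\perp$.

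The key input for the reverse inclusion is the global identity $T^{[2]} = (T_{[2]})^\perp$ inside $T$ with respect to $\beta$. This holds for any finite abelian group equipped with a nondegenerate alternating bicharacter: the containment $\subseteq$ is the same computation as above, and equality follows by comparing cardinalities, since $|T^{[2]}| = |T|/|T_{[2]}|$ from the squaring map while $|(T_{[2]})^\perp| = |T|/|T_{[2]}|$ from nondegeneracy. Combined with $(T^+)^\perp = \langle t_0\rangle$, which implies $\beta(t_0, s) = 1 \iff s \in T^+$, we conclude
\[
t_0 \in T^{[2]} \,\iff\, T_{[2]} \subseteq T^+ \,\iff\, T^+_{[2]} = T_{[2]}.
\]
Setting $\delta := |T^{[2]} \cap \langle t_0\rangle| \in \{1,2\}$ and $\epsilon := [T_{[2]} : T^+_{[2]}] \in \{1,2\}$, this equivalence says precisely $\delta\cdot\epsilon = 2$ in every case.

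For the counting, nondegeneracy of $\barr\beta$ gives $|\barr R^\perp| = |\barr T|/|\barr R|$, so it suffices to check $|\barr S|\cdot|\barr R| = |\barr T|$. Since $t_0 \in T^+_{[2]}$ we have $|\barr R| = |T^+_{[2]}|/2$, and by definition $|\barr S| = |T^{[2]}|/\delta$; therefore
\[
|\barr S|\cdot|\barr R| \,=\, \frac{|T^{[2]}|\cdot|T^+_{[2]}|}{2\delta} \,=\, \frac{|T^{[2]}|\cdot|T_{[2]}|}{2\delta\epsilon} \,=\, \frac{|T|}{4} \,=\, |\barr T|,
\]
using $|T^{[2]}|\cdot|T_{[2]}| = |T|$ and $\delta\epsilon = 2$. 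The main obstacle is noticing this coupling of the two binary invariants $\delta$ and $\epsilon$: neither of the questions ``is $t_0$ a square?'' nor ``does $T_{[2]}$ lie in $T^+$?'' is determined by $T$ and $T^+$ individually, yet their product is constant, which is what allows a single uniform count to work in place of a tedious case analysis.
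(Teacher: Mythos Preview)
Your proof is correct, but it takes a more laborious route than the paper's. The paper computes the orthogonal complement of $\barr S$ (rather than of $\barr R$) directly in a single chain of equivalences: for $t\in T^+$,
\[
\theta(t)\in\barr S' \;\Longleftrightarrow\; \beta(t,s^2)=1 \text{ for all } s\in T \;\Longleftrightarrow\; \beta(t^2,s)=1 \text{ for all } s\in T \;\Longleftrightarrow\; t^2=e,
\]
the last step using nondegeneracy of $\beta$ on all of $T$ (not merely of $\barr\beta$ on $\barr T$). This gives $\barr S'=\barr R$ at once, and then $\barr S=\barr R'$ by double complementation. Your argument proves the inclusion $\barr S\subseteq\barr R'$ by essentially the same computation, but then obtains equality via a cardinality count that hinges on the auxiliary identity $\delta\epsilon=2$. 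That coupling is a pleasant observation in its own right, but the paper's approach sidesteps it entirely: by taking the complement of $\barr S$ rather than of $\barr R$ and lifting to the nondegenerate pairing on the larger group $T$, the question of whether $t_0$ lies in $T^{[2]}$ never enters.
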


\begin{proof}
	We claim that $\barr S' = \barr R$. Indeed,
	\[
		\begin{split}
			\barr S' & = \{ \theta(t) \mid t\in T^+ \AND \barr\beta(\theta (t), \theta (s^2)) =1 \text{ for all }s\in T\}\\ & = \{\theta(t) \mid t\in T^+ \AND \beta (t, s^2) =1 \text{ for all }s\in T\}\\ & = \{ \theta(t) \mid t\in T^+ \AND \beta (t^2, s) =1 \text{ for all }s\in T\}\\ & = \{ \theta(t) \mid t\in T^+ \AND t^2=e \}\\ & = \barr R\,.
		\end{split}
	\]
	It follows that $\barr S = \barr R'$, as desired.
\end{proof}

\subsection{A description of odd gradings in terms of $G$}\label{ssec:second-odd}

Our second description of an odd grading consists of its finest even coarsening and the data necessary to recover the odd grading from this coarsening. All parameters will be obtained in terms of $G$ rather than its extension $G^\#=G\times \ZZ_2$.

Let $t_0\in G$ be an arbitrarily fixed element of order 2 and set $\barr G = \frac{G}{\langle t_0 \rangle}$. Let $\barr T \subseteq \barr G$ be a finite subgroup and let $\barr \beta: \barr T \times \barr T \rightarrow \FF^\times$ be a nondegenerate alternating bicharacter. We define $T^+\subseteq G$ to be the inverse image of $\barr T$ under the natural homomorphism $\theta: G\rightarrow \barr G$. Note that $\barr \beta$ gives rise to a bicharacter $\beta^+$ on $T^+$ whose radical is generated by the element $t_0$. We wish to define $T^-\subseteq G\times \{\barr 1\}$ so that $T=T^+\cup T^-$ is a subgroup of $G^\#$ and $\beta^+$ extends to a nondegenerate alternating bicharacter on $T$.

From Lemma \ref{lemma:square-subgroup}, we have a necessary condition for the existence of such $T^-$, namely, for $\barr R=\frac{T^+_{[2]}}{\langle t_0 \rangle}$, we need $\barr R' \subseteq \barr G^{[2]}$ (indeed, $\barr S$ is a subgroup of $\overline {G^{[2]}} = \barr G^{[2]}$). We will now prove that this condition is also sufficient.

\begin{prop}\label{prop:square-subgroup-converse}
	If $\left( \frac{T^+_{[2]} }{\langle t_0 \rangle}\right)'\subseteq \barr G^{[2]}$, then there exists an element $t_1\in G\times \{\barr 1\} \subseteq G^\#$ such that $T= T^+ \cup t_1\, T^+$ is a subgroup of $G^\#$ and  $\beta^+$ extends to a nondegenerate alternating bicharacter $\beta:T\times T\rightarrow \FF^\times$.
\end{prop}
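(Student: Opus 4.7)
My plan is to parametrize an extension $(T, \beta)$ by a pair $(u, \chi)$, where $t_1 = (u, \bar 1) \in G^\#$ and $\chi := \beta(t_1, \cdot)|_{T^+} : T^+ \to \FF^\times$. A direct check of the bicharacter relations on $T = T^+ \cup t_1 T^+$ shows that such a pair produces a nondegenerate alternating bicharacter $\beta$ extending $\beta^+$ if and only if: (i) $u^2 \in T^+$ (so $T$ is closed); (ii) $\chi^2 = \beta^+(u^2, \cdot)$ (bicharacter compatibility); (iii) $\chi(u^2) = 1$ (forces $\beta(t_1, t_1) = 1$, i.e.\ alternating); and (iv) $\chi(t_0) = -1$ (nondegeneracy, since $\langle t_0 \rangle = \rad \beta^+$).

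For the construction I would first fix any character $\chi_0 : T^+ \to \FF^\times$ with $\chi_0(t_0) = -1$, which exists because $\FF^\times$ is divisible. Since $\chi_0^2$ is trivial on $\langle t_0 \rangle$, it descends to a character on $\bar T$ and, by nondegeneracy of $\bar\beta$, equals $\bar\beta(\bar v_0, \theta(\cdot))$ for a unique $\bar v_0 \in \bar T$. The key observation is that $\bar v_0 \in \bar R'$: for every $r \in T^+_{[2]}$,
\[
	\bar\beta(\bar v_0, \theta(r)) = \chi_0^2(r) = \chi_0(r^2) = 1.
\]
The hypothesis then yields $\bar v_0 = \bar g^2$ for some $\bar g \in \bar G$; any lift $u \in G$ of $\bar g$ satisfies $\theta(u^2) = \bar v_0 \in \bar T$, so $u^2 \in T^+$, and setting $\chi := \chi_0$ gives $\chi^2 = \bar\beta(\bar v_0, \theta(\cdot)) = \beta^+(u^2, \cdot)$. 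Thus (i), (ii), and (iv) hold.

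The main obstacle is arranging (iii), since a priori only $\chi(u^2)^2 = \beta^+(u^2, u^2) = 1$, so $\chi(u^2) = \pm 1$ and the sign must be fixed. I would split into two cases. If $t_0 \in G^{[2]}$, fix $h \in G$ with $h^2 = t_0$; replacing $u$ by $uh$ when $\chi(u^2) = -1$ preserves $\theta(u^2) = \bar v_0$ (as $\theta(t_0) = \bar e$) and multiplies $\chi(u^2)$ by $\chi(t_0) = -1$, so we may assume $\chi(u^2) = 1$. If instead $t_0 \notin G^{[2]}$, then $t_0 \notin T^{+[2]}$ (since $T^{+[2]} \subseteq G^{[2]}$), which forces $\bar R = \bar T_{[2]}$ and hence $\bar R' = (\bar T_{[2]})' = \bar T^{[2]}$; in particular $\bar v_0 = \bar s^2$ for some $\bar s \in \bar T$, so taking $\bar g := \bar s$ allows one to lift to $u \in T^+$ (using $\theta^{-1}(\bar T) = T^+$), whence
\[
	\chi(u^2) = \chi_0(u)^2 = \chi_0^2(u) = \bar\beta(\bar v_0, \bar s) = \bar\beta(\bar s, \bar s)^2 = 1
\]
holds automatically.

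Finally, I would set $t_1 := (u, \bar 1)$ and define $\beta$ on $T = T^+ \cup t_1 T^+$ by $\beta|_{T^+ \times T^+} := \beta^+$, $\beta(t_1, s) := \chi(s)$ and $\beta(s, t_1) := \chi(s)^{-1}$ for $s \in T^+$, $\beta(t_1, t_1) := 1$, extended to $T \times T$ by the bicharacter property. Well-definedness amounts to the identity $\beta^+(u^2, \cdot) = \chi^2$, i.e.\ (ii); the alternating property reduces to $\beta(t_1 s, t_1 s) = \chi(s)\chi(s)^{-1}\beta^+(s,s) = 1$ together with (iii); and nondegeneracy follows from (iv), as any homogeneous element in the radical of $\beta$ would have to lie in $\rad \beta^+ = \langle t_0 \rangle$ and be killed by $\chi$, contradicting $\chi(t_0) = -1$.
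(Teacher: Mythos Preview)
Your proof is correct and, for the existence of $t_1$, follows essentially the same path as the paper: fix a character $\chi$ on $T^+$ with $\chi(t_0)=-1$, use nondegeneracy of $\bar\beta$ to find the element $\bar v_0\in\bar T$ representing $\chi^2$, show $\bar v_0\in\bar R'$, invoke the hypothesis to get a square root in $\bar G$, and then run the same two-case argument (depending on whether $t_0\in G^{[2]}$) to force $\chi(u^2)=1$. The paper organizes this slightly differently---it first normalizes the lift $a\in T^+$ of $\bar v_0$ so that $\chi(a)=1$ and then proves $a\in G^{[2]}$---but the content is identical.

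The genuine difference is in the construction of $\beta$. You write down $\beta$ explicitly on cosets and verify the bicharacter, alternating, and nondegeneracy conditions by hand. The paper instead passes to the auxiliary group $\widetilde T = T^+\times\langle\tau\rangle$, defines an alternating bicharacter $\tilde\beta$ there, proves that $\rad\tilde\beta=\langle a\tau^{-2}\rangle$, and obtains $\beta$ as the induced bicharacter on the quotient $\widetilde T/\langle a\tau^{-2}\rangle\cong T$. Your approach is more elementary; the paper's is cleaner in that nondegeneracy and well-definedness come for free from the radical computation. One small imprecision in your write-up: condition (iii), $\chi(u^2)=1$, is actually needed for \emph{well-definedness} of the bicharacter (so that $\beta(t_1^2,t_1)=\beta(u^2,t_1)$ is consistent with $\beta(t_1,t_1)^2=1$), not only for the alternating property; since you establish (iii) anyway, this does not affect the validity of the argument.
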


\begin{proof}
	Let $\chi\in \widehat {T^+}$ be such that $\chi(t_0) = -1$. Since $\chi^2(t_0)=1$, we can consider $\chi^2$ as a character of the group $\barr T = \frac{T^+}{\langle t_0 \rangle}$, hence there is $a\in T^+$ such that $\chi^2(\barr t) = \barr\beta(\barr a, \barr t)$ for all $\barr t\in \barr T$. Note that $\chi (a) = \pm 1$ and hence, changing $a$ to $a t_0$ if necessary, we may assume $\chi (a) = 1$.
	
	\bigskip 

	\textit{(i) Existence of $t_1$}:
	
	\medskip 

	As before, let $\barr R = \frac{T^+_{[2]}}{\langle t_0 \rangle}$. Then $\barr a \in \barr R'$. Indeed, if $b\in T^+_{[2]}$, then $\barr\beta(\barr a,\barr b) = \chi^2 (\barr b) = \chi (b^2) = \chi (e) =1$. By our assumption, we conclude that $\barr a\in \barr G^{[2]}$. We are going to prove that, actually, $a\in G^{[2]}$. Pick $u\in G$ such that $\barr u^2 = \barr a$. Then, either $a=u^2$ or $a=u^2t_0$. If $t_0 = c^2$ for some $c\in G$, then replacing $u$ by $uc$ if necessary, we can make $u^2 = a$. Otherwise, $t_0$ has no square root in $T^+$, which implies that $\barr R=\barr T_{[2]}$. Hence $\barr R' = (\barr T_{[2]})' = \barr T^{[2]} = \theta ((T^+)^{[2]})$. Thus, in this case, we can assume $u\in T^+$. Then $\chi(u^2) = \chi^2(u) = \barr \beta (\barr a, \barr u) = \barr \beta (\barr u^2, \barr u) =1$, hence $u^2 = a$. Finally, we set $t_1=(u,\barr 1) \in G^\#$.

	\bigskip 

	\textit{(ii) Existence of $\beta$}:
	
	\medskip 

	We wish to extend $\beta^+$ to $T=T^+ \cup t_1\, T^+$ by setting $\beta(t_1, t) = \chi (t)$ for all $t\in T^+$. It is clear that there is at most one alternating bicharacter on $T$ with this property that extends $\beta^+$. To show that it exists and is nondegenerate, we will first introduce an auxiliary group $\widetilde T$ and a bicharacter $\tilde\beta$.

	Let $\widetilde T$ be the direct product of $T^+$ and the infinite cyclic group generated by a new symbol $\tau$. We define $\tilde\beta:\widetilde T\times \widetilde T \rightarrow \FF^\times$ by $ \tilde\beta(s\tau^i,t\tau^j) = \beta^+(s,t)\, \chi (s)^{-j}\, \chi (t)^i$, where $s,t\in T^+$. It is clear that $\tilde\beta$ is an alternating bicharacter.

	\begin{claim*}
	$\langle a\tau^{-2} \rangle = \rad \tilde \beta\,$.
	\end{claim*}

	Let $t\in T^+$ and $\ell\in \ZZ$. Then
	\[
	\tilde \beta (a\tau^{-2},t\tau^\ell) =
	\beta^+(a, t)\,\, \chi(t)^{-2} \, \chi(a)^{-\ell} = 
	\barr\beta(\barr a, \barr t)\,\, \chi(t)^{-2} = \chi(t)^2 \, \chi(t)^{-2} = 1,
    \]
    hence, $\langle a\tau^{-2} \rangle \subseteq \rad \tilde \beta$.

    Conversely, if $s\tau^k \in \rad \tilde\beta$, then, $1 = \tilde \beta (s\tau^k, t_0) = \beta^+(s,t_0)\, \chi(t_0)^k = (-1)^k$, hence $k$ is even. From the previous paragraph, we know that $a\tau^{-2} \in \rad \tilde\beta$, hence $a^\frac{k}{2} \tau^{-k} \in \rad \tilde\beta$ and $s a^\frac{k}{2} = (s \tau^k) (a^\frac{k}{2} \tau^{-k}) \in \rad \tilde\beta$. Since $s a^\frac{k}{2} \in T^+$, we get $s a^\frac{k}{2} \in \rad \beta^+ = \{ e, t_0 \}$. But, if $sa^\frac{k}{2} = t_0$, we have $1 = \tilde\beta (sa^\frac{k}{2}, \tau) = \tilde\beta (t_0, \tau) = \chi(t_0)\inv = -1$, a contradiction. It follows that $sa^\frac{k}{2} = e$ and, hence, $s\tau^k = a^{-\frac{k}{2}}\tau^k = (a\tau^{-2})^{\frac{k}{2}}$, concluding the proof of the claim.
    \qedclaim 

    We have a homomorphism $\vphi:\widetilde T\rightarrow T$ that is the identity on $T^+$ and sends $\tau$ to $t_1$. Clearly, $\ker \vphi = \langle a\tau^{-2} \rangle$. By the above claim, $\tilde\beta$ induces a nondegenerate alternating bicharacter on $\frac{\widetilde T}{\langle a\tau^{-2} \rangle}$, which can be transferred via $\vphi$ to a nondegenerate alternating bicharacter on $T$ that extends $\beta^+$.
\end{proof}

Now fix $\chi\in \widehat {T^+}$ with $\chi(t_0)=-1$ and let $a$ be the unique element of $T^+$ such that $\chi(a)=1$ and $\chi^2(\barr t) = \barr\beta (\barr a, \barr t)$ for all $t\in T^+$. Suppose that the condition of Proposition \ref{prop:square-subgroup-converse} is satisfied. 
Then part (i) of the proof shows that there exists $u\in G$ such that $u^2=a$. Moreover, part (ii) shows that there exists an extension of $\beta^+$ to 
a nondegenerate alternating bicharacter $\beta$ on $T=T^+\cup t_1T^+$, where $t_1=(u,\bar 1)$, such that $\beta(t_1,t)=\chi(t)$ for all $t\in T^+$.
Clearly, such an extension is unique. We will denote it by $\beta_u$ and its domain by $T_u$.

\begin{prop}\label{prop:roots-of-a}
For every $T\subseteq G^\#$ such that $T\subsetneq G$ and $T\cap G=T^+$ and for every extension of $\beta^+$ to a nondegenerate alternating bicharacter $\beta$
on $T$, there exists $u\in G$ such that $u^2=a$, $T=T_u$ and $\beta=\beta_u$.
Moreover, $\beta_u=\beta_{\tilde{u}}$ if, and only if, $u \equiv \tilde{u} \pmod{\langle t_0 \rangle}$.
\end{prop}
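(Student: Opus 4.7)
The plan hinges on identifying the orthogonal complement $(T^+)'$ of $T^+$ inside $T$ with respect to $\beta$. Since $\beta$ extends $\beta^+$ and $t_0 \in \rad \beta^+$, we have $\langle t_0 \rangle \subseteq (T^+)'$; nondegeneracy of $\beta$ gives $|(T^+)'| = [T:T^+] = 2$, so $(T^+)' = \langle t_0 \rangle$, and this subgroup is contained in $T^+$.

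\emph{Existence.} First, fix an arbitrary $t_1 \in T^-$ and set $\chi_1(t) := \beta(t_1, t)$ for $t \in T^+$. Nondegeneracy of $\beta$ together with $t_0 \in (T^+)' \setminus \{e\}$ forces $\beta(t_0, t_1) = -1$; by alternation, $\chi_1(t_0) = \beta(t_1, t_0) = -1$. The character $\chi/\chi_1$ is therefore trivial on $t_0$ and descends to $\bar T$, so by nondegeneracy of $\bar\beta$ there is $b \in T^+$ with $\chi(t)/\chi_1(t) = \bar\beta(\bar b, \bar t) = \beta(b, t)$ for every $t \in T^+$. Setting $t_1' := b t_1 \in T^-$ gives $\beta(t_1', t) = \chi(t)$ on $T^+$, and $T^- = t_1 T^+ = t_1' T^+$. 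Writing $t_1' = (u, \bar 1)$ with $u \in G$, we conclude $T = T_u$ and, by the uniqueness of the extension noted just above the statement, $\beta = \beta_u$.

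\emph{The square $u^2 = a$.} Two computations, made possible by $t_1'^2 = (u^2, \bar 0) \in T^+$, pin down $u^2$. On the one hand, alternation of $\beta$ gives
\[
\chi(u^2) = \beta(t_1', t_1'^2) = \beta(t_1', t_1')^2 = 1.
\]
On the other hand, for every $s \in T^+$,
\[
\beta(u^2, s) = \beta(t_1', s)^2 = \chi(s)^2 = \bar\beta(\bar a, \bar s) = \beta(a, s),
\]
so $u^2 a^{-1} \in (T^+)' \cap T^+ = \langle t_0 \rangle$, i.e., $u^2 \in \{a, a t_0\}$. Since $\chi(a) = 1$ and $\chi(a t_0) = -1$, the first computation forces $u^2 = a$. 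The main point of the whole argument is this alternation identity, which converts the normalization $\chi(a) = 1$ (that characterized $a$ in the first place) into a concrete constraint distinguishing $a$ from $a t_0$.

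\emph{Uniqueness.} If $\beta_u = \beta_{\tilde u}$ then in particular $T_u = T_{\tilde u}$, so $(\tilde u, \bar 1) \in (u, \bar 1) T^+$ and $\tilde u = s u$ for some $s \in T^+$. Comparing $\beta_u((\tilde u, \bar 1), t) = \beta(s, t)\,\chi(t)$ with $\beta_{\tilde u}((\tilde u, \bar 1), t) = \chi(t)$ forces $s \in (T^+)' \cap T^+ = \langle t_0 \rangle$, hence $\tilde u \equiv u \pmod{\langle t_0 \rangle}$. Conversely, if $\tilde u = t_0 u$ then $\tilde u^2 = u^2 = a$ and $(\tilde u, \bar 1) = t_0 \cdot (u, \bar 1) \in T_u$, while $\beta_u(t_0 (u, \bar 1), t) = \beta(t_0, t)\,\chi(t) = \chi(t)$ on $T^+$; thus the two extensions of $\beta^+$ agree on the generating set $T^+ \cup \{(\tilde u, \bar 1)\}$, giving $\beta_{\tilde u} = \beta_u$.
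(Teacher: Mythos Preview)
Your proof is correct and follows essentially the same approach as the paper's: find an element $t_1'\in T^-$ with $\beta(t_1',\cdot)=\chi$ on $T^+$, then verify $t_1'^2=a$ via the alternation identity and the defining property of $a$, and handle uniqueness by comparing values on $T^+$. The only minor difference is in how $t_1'$ is produced: the paper extends $\chi$ to a character of all of $T$ and uses the isomorphism $T\to\widehat T$ induced by nondegenerate $\beta$ to locate $t_1$ directly, whereas you start from an arbitrary $t_1\in T^-$ and correct by an element of $T^+$ using nondegeneracy of $\bar\beta$ --- but this is a cosmetic variation, not a different strategy.
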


\begin{proof}

We have $T=T^+ \cup T^-$ where $T^-\subseteq G\times \{\barr 1\}$ is a coset of $T^+$.
We can extend $\chi$ to a character of $T$, which we still denote by $\chi$, and, since $\beta$ is nondegenerate, 
there is $t_1\in T$ such that $\beta(t_1, t) = \chi(t)$ for all $t\in T$. We have $t_1\in T^-$ since $\beta(t_1,t_0)=\chi(t_0)=-1$, so $t_1=(u,\bar 1)$, 
for some $u\in G$, and hence $T=T_u$. We claim that $t_1^2=a$. Indeed, $\chi(t_1^2) = \beta(t_1,t_1^2)=1$ and, for every $t\in T^+$,
\[
 	\chi^2(\barr t) = \chi(t)^2 = \beta (t_1, t)^2 = \beta (t_1^2, t) = \barr\beta (\,\overline {(t_1^2)},\, \barr t)\,,
\]
so $t_1^2$ satisfies the definition of the element $a$. This completes the proof of the first assertion.

Now suppose $\beta_u=\beta_{\tilde{u}}$, so in particular $t_1\,T^+=\tilde{t}_1\,T^+$ where $t_1 = (u, \barr 1)$ and $\tilde{t}_1 = (\tilde u, \barr 1)$.
Then there is $r\in T^+$ such that $\tilde{t}_1 = t_1\,r$. Also, for every $t\in T^+$,
\[
\chi(t) = \beta_{\tilde{u}}(\tilde{t}_1,t) = \beta_u (t_1\,r, t)
		= \beta_u(t_1, t)\,\beta_u(r,t) = \chi(t) \beta^+(r, t)
\]
and, hence, $\beta^+(r, t)=1$ for all $t\in T^+$. This means that $r = u\inv \tilde{u} \in \langle t_0 \rangle$.

Conversely, if $\tilde u = u r$ for some $r\in \langle t_0 \rangle$, then $t_1\, T^+ = \tilde t_1\, T^+$. Also, for all $t\in T^+$,
\[
\beta_u(t_1, t) = \chi(t) = \beta_{\tilde{u}}(\tilde{t}_1, t) = \beta_{\tilde{u}}(t_1r, t) = 
\beta_{\tilde{u}}(t_1, t)\, \beta^+(r, t) = \beta_{\tilde{u}}(t_1, t).
\]
It follows that $\beta_u=\beta_{\tilde{u}}$.
\end{proof}

Note that, keeping the character $\chi \in \widehat {T^+}$ with $\chi(t_0) = -1$ fixed, we have a surjective map from the square roots of $a$ to all possible pairs $(T,\beta)$. If we had started with a different character above, we would have obtained a different surjective map. Hence, for parametrization purposes, $\chi$ (and, hence, $a$) will be fixed.

We are now in a position to give a classification of odd gradings in terms of $G$ only. We already have the following parameters: an element $t_0\in G$ of order $2$ and a pair $(\barr T, \barr\beta)$. For each $t_0$ and $\barr T$, we fix a character $\chi\in \widehat {T^+}$ satisfying $\chi(t_0) = -1$. The next parameter is an element $u\in G$ such that $u^2 = a$, where $a$ is the unique element of $T^+$ such that $\chi(a)=1$ and $\chi^2(\barr t) = \barr\beta (\barr a, \barr t)$ for all $t\in T^+$. Finally, let $\gamma = (g_1, \ldots, g_k)$ be a $k$-tuple of elements of $G$. With these data, we construct the grading $\Gamma (t_0, \barr T, \barr \beta, u, \gamma)$ as follows:

\begin{defi}\label{def:odd-grd-on-Mmn-2}
	Let $\D$ be a standard realization of the $G^\#$-graded division algebra with parameters $(T_u,\beta_u)$. Take the graded $\D$-module $\mathcal U = \D^{[g_1]}\oplus \cdots \oplus \D^{[g_k]}$. Then $\End_\D (\mathcal U)$ is a $G^\#$-graded algebra, hence a superalgebra by means of $p:G^\# \rightarrow \ZZ_2$. As a superalgebra, it is isomorphic to $M(n,n)$ where $n=k\sqrt{|\barr T|}$. We define $\Gamma (t_0, \barr T, \barr \beta, u, \gamma)$ as the corresponding $G$-grading on $M(n,n)$.
\end{defi}

Theorem \ref{thm:first-odd-iso} together with Proposition \ref{prop:roots-of-a} give the following result:

\begin{thm}\label{thm:2nd-odd-iso}
	Every odd $G$-grading on the superalgebra $M(n,n)$ is isomorphic to some $\Gamma (t_0, \barr T, \barr \beta, u, \gamma)$ as in Definition \ref{def:odd-grd-on-Mmn-2}.
	Two odd gradings, $\Gamma (t_0, \barr T, \barr \beta, u, \gamma)$ and $\Gamma (t_0', \barr T', \barr \beta', u', \gamma')$, 
	are isomorphic if, and only if, $t_0=t_0'$, $\barr T = \barr T'$, $\barr\beta = \barr\beta'$, $u \equiv u' \pmod{\langle t_0 \rangle}$, 
	and there is $g\in G$ such that $g\, \Xi(\gamma) = \Xi(\gamma')$.\qed
\end{thm}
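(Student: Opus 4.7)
The plan is to deduce this theorem by reducing it to the $G^\#$-formulation provided by Theorem \ref{thm:first-odd-iso} and invoking Proposition \ref{prop:roots-of-a} as the translation mechanism. Once $t_0$, $\barr T$ and $\barr\beta$ are fixed, the subgroup $T^+\subseteq G$ (preimage of $\barr T$), the bicharacter $\beta^+$ on $T^+$, the chosen character $\chi\in\widehat{T^+}$ and the element $a\in T^+$ are all determined, and Proposition \ref{prop:roots-of-a} yields a bijection between cosets $u\langle t_0\rangle$ with $u^2=a$ and pairs $(T,\beta)$ that extend $(T^+,\beta^+)$ with $T\not\subseteq G$. So the passage from the old parameters $(T,\beta,\gamma)$ to the new parameters $(t_0,\barr T,\barr\beta,u,\gamma)$ is essentially a relabeling, and the nontrivial content has already been encapsulated in Proposition \ref{prop:roots-of-a}.

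For the existence assertion, I take an odd $G$-grading $\Gamma$ on $M(n,n)$ and apply Theorem \ref{thm:first-odd-iso} to write $\Gamma\cong\Gamma(T,\beta,\gamma)$. From Subsection \ref{subsec:odd-gradings}, the radical of $\beta^+=\beta|_{T^+\times T^+}$ is $\langle t_0\rangle$ for a unique parity element $t_0\in T^+$ of order $2$. Setting $\barr T=T^+/\langle t_0\rangle$ and letting $\barr\beta$ be the nondegenerate bicharacter on $\barr T$ induced by $\beta^+$, Proposition \ref{prop:roots-of-a} produces an element $u\in G$ with $u^2=a$, $T_u=T$ and $\beta_u=\beta$. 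Definition \ref{def:odd-grd-on-Mmn-2} then shows that $\Gamma(t_0,\barr T,\barr\beta,u,\gamma)$ coincides with $\Gamma(T,\beta,\gamma)$, proving the first assertion.

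For the isomorphism criterion, Definition \ref{def:odd-grd-on-Mmn-2} identifies the gradings in question with $\Gamma(T_u,\beta_u,\gamma)$ and $\Gamma(T_{u'},\beta_{u'},\gamma')$, so by Theorem \ref{thm:first-odd-iso} they are isomorphic if and only if $T_u=T_{u'}$, $\beta_u=\beta_{u'}$, and $g\,\Xi(\gamma)=\Xi(\gamma')$ for some $g\in G$. What remains is to check that these two $G^\#$-level equalities are jointly equivalent to the four equalities $t_0=t_0'$, $\barr T=\barr T'$, $\barr\beta=\barr\beta'$ and $u\equiv u'\pmod{\langle t_0\rangle}$. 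In the forward direction, $t_0$ is recovered as the generator of $\rad\beta_u^+$, $T^+$ as $T_u\cap G$, and $\barr\beta$ as the bicharacter induced by $\beta_u^+$; the congruence on $u$ then follows from Proposition \ref{prop:roots-of-a}. In the backward direction, the four equalities force $T^+$, $\beta^+$, $\chi$ and $a$ to agree, and since $\langle t_0\rangle\subseteq T^+$ the coset $(u,\barr 1)T^+$ depends only on $u\pmod{\langle t_0\rangle}$, whence $T_u=T_{u'}$; the equality $\beta_u=\beta_{u'}$ then follows from Proposition \ref{prop:roots-of-a} again.

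The only subtlety is the bookkeeping in this last paragraph, specifically the observation that $t_0$ (and hence $\barr T$, $\barr\beta$) can be recovered intrinsically from $(T_u,\beta_u)$, so that no separate information beyond the parameters in the theorem's statement is needed.
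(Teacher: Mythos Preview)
Your proposal is correct and follows exactly the paper's approach: the paper simply states that the result follows from Theorem~\ref{thm:first-odd-iso} together with Proposition~\ref{prop:roots-of-a}, and you have spelled out precisely how these two results combine. Your bookkeeping in the final paragraph---recovering $t_0$, $\barr T$, $\barr\beta$ intrinsically from $(T_u,\beta_u)$ and then invoking Proposition~\ref{prop:roots-of-a} for the equivalence on $u$---is exactly the translation the paper leaves implicit.
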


\subsection{Fine gradings up to equivalence}

We start by investigating the gradings on the superalgebra $M(m,n)$ that are fine among even gradings. 
By Proposition \ref{prop:3-equiv-even-morita-action}, this is the same as fine gradings on $M(m,n)$ as a $\ZZ$-superalgebra, and, 
by the discussion in Subsection \ref{subsec:odd-gradings}, the same as fine gradings if $m\neq n$ or $\Char \FF = 2$.

We will use the following notation. Let $H$ be a finite abelian group whose order is not divisible by $\Char \FF$. Set $T_H = H\times \widehat H$
and define $\beta_H: T_H\times T_H \to \FF^\times$ by 
\[
\beta_H((h_1, \chi_1), (h_2, \chi_2)) = \chi_1(h_2)\, \chi_2 (h_1)\inv.
\] 
Then $\beta_H$ is a nondegenerate alternating bicharacter on $T_H$. 

\begin{defi}\label{def:even-fine-grd-on-Mmn}
	Let $\ell \mid \operatorname{gcd}(m,n)$ be a natural number such that $\Char \FF\nmid\ell$ and put $k_0 := \frac{m}{\ell}$ and $k_1 := \frac{n}{\ell}$.
	Let $\Theta_\ell$ be a set of representatives of the isomorphism classes of abelian groups of order $\ell$. 
	For every $H$ in $\Theta_\ell$, we define $\Gamma(H, k_0, k_1)$ to be the even $T_H\times \ZZ^{k_0 + k_1}$-grading 
	$\Gamma(T_H, \beta_H, (e_1, \ldots, e_{k_0}), (e_{k_0 + 1}, \dots, e_{k_0 + k_1}))$ on $M(m,n)$, 
	where $\{e_1, \ldots, e_{k_0+k_1}\}$ is the standard basis of $\ZZ^{k_0 + k_1}$. 
	If $m$ and $n$ are clear from the context, we will simply write $\Gamma(H)$.
\end{defi}

Let $G_H$ be the subgroup of $T_H\times \ZZ^{k_0 + k_1}$ generated by the support of $\Gamma(H,k_0,k_1)$, i.e., 
$G_H = T_H\times \ZZ^{k_0 + k_1}_0$, where $\ZZ^k_0 := \{ (x_1, \ldots, x_k) \in \ZZ^k \mid x_1 + \cdots + x_k = 0\}$.

\begin{thm}\label{thm:class-fine-even}
	The fine gradings on $M(m,n)$ as a $\ZZ$-superalgebra are precisely the even fine gradings.
	Every such grading is equivalent to a unique $\Gamma(H)$ as in Definition \ref{def:even-fine-grd-on-Mmn}. 
	Moreover, every grading $\Gamma(H)$ is fine, and $G_H$ is its universal group.
\end{thm}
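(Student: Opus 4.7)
The plan is to apply Lemma~\ref{lemma:universal-grp} to the family
$\mathcal F = \{\Gamma(H,k_0,k_1)\}$ (indexed by $\ell \mid \gcd(m,n)$ with $\Char \FF \nmid \ell$, $H \in \Theta_\ell$, $k_0 = m/\ell$, $k_1 = n/\ell$), working within the class of even gradings on $M(m,n)$. By Proposition~\ref{prop:3-equiv-even-morita-action}, the gradings on $M(m,n)$ as a $\ZZ$-superalgebra are exactly the even gradings, so fineness in the $\ZZ$-superalgebra sense coincides with fineness within the even class. Once I prove the main equivalence, the first assertion of the theorem will be obtained as a corollary.

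I first check pairwise inequivalence of the members of $\mathcal F$. An equivalence would induce an isomorphism of universal grading groups, whose torsion subgroup is $T_H \cong H \times \widehat H \cong H \times H$ (using $\widehat H \cong H$ for finite abelian groups over our field). Since $H \times H$ determines $H$ up to isomorphism by the uniqueness of the primary decomposition of finite abelian groups, distinct classes in $\Theta_\ell$ yield inequivalent gradings, while $\ell$, $k_0$, $k_1$ are determined by $H$ together with the fixed $m$ and $n$.

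The main step is to show that every even $G$-grading on $M(m,n)$ is isomorphic to a coarsening $^{\alpha}\Gamma(H,k_0,k_1)$ for some $H \in \mathcal F$ and some homomorphism $\alpha$. Given such a grading, Theorem~\ref{thm:even-assc-iso} provides a parametrization $\Gamma(T,\beta,\gamma_0,\gamma_1)$. Since $\beta$ is a nondegenerate alternating bicharacter on the finite abelian group $T$ (which forces $\Char \FF \nmid |T|$, for otherwise $\beta$ would be degenerate), the classical symplectic normal form yields an isomorphism $(T,\beta) \cong (T_H,\beta_H)$ for some $H \in \Theta_\ell$ with $\ell = \sqrt{|T|}$. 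I then define $\alpha: T_H \times \ZZ^{k_0+k_1} \to G$ by letting $\alpha|_{T_H}$ be the chosen identification $T_H \cong T \hookrightarrow G$ and $\alpha(e_i) = g_i$, where $g_i$ is the $i$-th entry of the concatenation $(\gamma_0,\gamma_1)$; its restriction to $G_H$ is the required coarsening map, and direct comparison of degrees using Definition~\ref{def:explicit-grd-assoc} shows that $^{\alpha}\Gamma(H,k_0,k_1) \cong \Gamma(T,\beta,\gamma_0,\gamma_1)$.

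By Lemma~\ref{lemma:universal-grp}, every fine even grading is then equivalent to a unique $\Gamma(H)$ and $G_H$ is its universal group. To complete the theorem I inspect Definition~\ref{def:explicit-grd-assoc}: the homogeneous components of $\Gamma(H,k_0,k_1)$ are the one-dimensional subspaces $\FF \cdot (E_{ij} \otimes X_t)$, which admit no proper refinement, so $\Gamma(H)$ is fine. This additional information also yields the first assertion of the theorem, since a fine $\ZZ$-superalgebra grading is equivalent to some $\Gamma(H)$, hence has one-dimensional components and is \emph{a fortiori} fine as a $\ZZ_2$-superalgebra grading; conversely, an even grading that is fine in the $\ZZ_2$-sense admits no refinement at all and so is fine as a $\ZZ$-superalgebra grading as well. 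The most delicate point, which I expect to require the most care, is the invocation of the symplectic normal form for $(T,\beta)$ and the bookkeeping needed to confirm that the coarsening $\alpha$ precisely recovers the parameters $(T,\beta,\gamma_0,\gamma_1)$.
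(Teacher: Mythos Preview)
Your approach is essentially the same as the paper's: verify the hypotheses of Lemma~\ref{lemma:universal-grp} for the family $\{\Gamma(H)\}$ by constructing the coarsening homomorphism $\alpha$, and read off the conclusions. The construction of $\alpha$ via the symplectic normal form for $(T,\beta)$ matches the paper's decomposition $T=A\times B$.

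There is, however, a circularity in your ordering that you should repair. Lemma~\ref{lemma:universal-grp} requires as \emph{hypotheses} that the $\Gamma(H)$ are pairwise nonequivalent and fine, and only then \emph{concludes} that $G_H$ is the universal group. But your inequivalence argument asserts that the torsion subgroup of the universal grading group is $T_H$, which presupposes that $G_H$ is the universal group; and your verification of fineness (via one-dimensional components) is placed after you invoke the lemma. So as written, both hypotheses of the lemma rest on its conclusion. The paper avoids this by citing \cite[Proposition~2.35]{livromicha} at the outset to establish, for the underlying \emph{algebra} grading on $M_{m+n}(\FF)$, that $\Gamma(H)$ is fine with universal group $G_H$; the superalgebra statements follow immediately (one-dimensional components remain one-dimensional, and the universal-group relations are the same). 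You can fix your argument in the same way, or else move the one-dimensional-components observation to the front and then argue inequivalence without appealing to the universal group (for instance, by noting that any equivalence of the superalgebra gradings is in particular an equivalence of the underlying algebra gradings on $M_{m+n}(\FF)$, where the result is known). Simply knowing the components are one-dimensional does not by itself identify the universal group with $G_H$; that requires checking that the multiplicative relations in $G_H$ among support elements are exactly those forced by the grading, which is precisely the content of the cited proposition.
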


\begin{proof}
	By \cite[Proposition 2.35]{livromicha}, if we consider $\Gamma(H)$ as a grading on the algebra $M_{n+m}(\FF)$, it is a fine grading and 
	$G_H$ is its universal group. It follows that the same is true of $\Gamma(H)$ as a grading on the superalgebra $M(m,n)$.

	Let $\Gamma = \Gamma(T, \beta, \gamma_0, \gamma_1)$ be any even $G$-grading on $M(m,n)$. We can write $T = A\times B$ where the restrictions of $\beta$ to 
	the subgroups $A$ and $B$ are trivial and, hence, there is an isomorphism $\alpha: T_A \to T$ such that $\beta_A=\beta\circ(\alpha\times\alpha)$. We can extend $\alpha$ to a homomorphism $G_A \to G$ (also denoted by $\alpha$) by sending the elements $e_1, \ldots, e_{k_0}$ to the entries of $\gamma_0$ and the elements $e_{k_0+1}, \ldots, e_{k_0+k_1}$ to the entries of $\gamma_1$. It follows that ${}^\alpha \Gamma(A) \iso \Gamma$. Since all $\Gamma(H)$ are fine and pairwise nonequivalent (because their universal groups are pairwise nonisomorphic), we can apply Lemma \ref{lemma:universal-grp}, concluding that every fine grading 
	on $M(m,n)$ as a $\ZZ$-superalgebra is equivalent to a unique $\Gamma(H)$.
\end{proof}

We now consider odd fine gradings on $M(n,n)$, so $\Char\FF\ne 2$. We first define some gradings on the algebra $M_{2n}(\FF)$ and then impose a superalgebra structure.

\begin{defi}\label{def:param-fine-odd}
	Let $\ell\mid n$ be a natural number such that $\Char \FF\nmid\ell$ and put $k:= \frac{n}{\ell}$.
	Let $\Theta_{2\ell}$ be a set of representatives of the isomorphism classes of abelian groups of order $2\ell$.
	For every $H$ in $\Theta_{2\ell}$, we consider the $T_H\times \ZZ^k$-grading $\Gamma = \Gamma(T_H, \beta_H, (e_1, \ldots, e_k))$ on $M_{2n}(\FF)$,
	where $\{e_1, \ldots, e_k\}$ is the standard basis of $\ZZ^k$.
	Then we choose an element $t_0 \in T$ of order $2$ and define a group homomorphism $p: T_H\times\ZZ^k \to \ZZ_2$ by
	\[
	p(t, x_1, \ldots, x_k) =
	\begin{cases*}
		\bar 0 & if $\beta(t_0, t) = 1$,\\
		\bar 1 & if $\beta(t_0, t) = -1$.
	\end{cases*}
	\]
	This defines a superalgebra structure on $M_{2n}(\FF)$. By construction, $\Gamma$ is odd as a grading on this superalgebra $(M_{2n}(\FF),p)$, 
	and this forces the superalgebra to be isomorphic to $M(n,n)$. We denote by $\Gamma(H, t_0, k)$ the grading $\Gamma$ considered as a grading on $M(n,n)$. 
	If $n$ is clear from the context, we will simply write $\Gamma(H, t_0)$.
\end{defi}

Note that the parameter $t_0$ of $\Gamma(H, t_0, k)$ does not affect the grading on the algebra $M_{2n}(\FF)$, but, as we will see in Proposition \ref{prop:equiv-with-same-H}, different choices of $t_0$ can yield nonequivalent gradings on the superalgebra $M(n,n)$.

\begin{prop}\label{prop:all-fine-odd}
	Each grading $\Gamma(H, t_0)$ on $M(n,n)$ is fine, and its universal group is $G_H = T_H\times \ZZ^k_0$. 
	Every odd fine grading on $M(n,n)$ is equivalent to at least one $\Gamma(H, t_0)$. 
\end{prop}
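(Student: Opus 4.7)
The plan is to follow the strategy of Theorem \ref{thm:class-fine-even}. For the first assertion, I would observe that $\Gamma(H, t_0, k)$, viewed as a $G_H$-grading on the underlying algebra $M_{2n}(\FF)$, is precisely $\Gamma(T_H, \beta_H, (e_1, \ldots, e_k))$; by \cite[Proposition 2.35]{livromicha} it is fine as an algebra grading with universal group $G_H = T_H \times \ZZ^k_0$ and all its components are one-dimensional. The universal group of a set grading depends only on the multiplication rules $A_{s_1} A_{s_2} \subseteq A_{s_3}$, which are the same whether we view $M_{2n}(\FF)$ as an algebra or $M(n,n)$ as a superalgebra, so $G_H$ remains the universal group on the superalgebra side. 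To verify fineness of $\Gamma(H, t_0, k)$ as a superalgebra grading, I would take any refinement $\Gamma'$ and split each of its components along the $\ZZ_2$-grading to obtain an algebra grading $\Gamma''$ on $M_{2n}(\FF)$ that refines $\Gamma(H, t_0, k)$; since the latter is fine with one-dimensional components, $\Gamma''$ can only relabel them, and then the requirement that each $\Gamma'$-component lie inside a single $\Gamma(H, t_0, k)$-component forces $\Gamma'$ to be nonproper.

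For the second assertion, let $\Gamma$ be an odd fine $G$-grading on $M(n,n)$; by Theorem \ref{thm:first-odd-iso} we may write $\Gamma \iso \Gamma(T, \beta, \gamma)$ with $\gamma = (g_1, \ldots, g_k) \in G^k$. Decompose $T = A \times B$ with $\beta$ trivial on each factor (as in Section \ref{sec:gradings-on-matrix-algebras}), set $H := A$, and fix an isomorphism $\phi: T_H \to T$ with $\beta_H = \beta \circ (\phi \times \phi)$. Let $t_0 \in T$ be the parity element (Subsection \ref{subsec:odd-gradings}) and set $\tilde t_0 := \phi^{-1}(t_0) \in T_H$; this is an order-$2$ element whose associated parity homomorphism from Definition \ref{def:param-fine-odd} pulls back through $\phi$ to the parity of $\Gamma$. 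With $\pi: G^\# \to G$ the projection, I would define $\alpha: G_H = T_H \times \ZZ^k_0 \to G$ by $\alpha|_{T_H} := \pi \circ \phi$ and $\alpha(e_i - e_j) := g_i g_j^{-1}$. A direct component-by-component comparison via Definition \ref{def:explicit-grd-assoc}, with compatible matrix models of the two graded division algebras related by $\phi$, will then show ${}^\alpha \Gamma(H, \tilde t_0, k) \iso \Gamma$. Since both $\Gamma$ and $\Gamma(H, \tilde t_0, k)$ are fine, $\alpha$ must restrict to a bijection on supports, yielding the desired equivalence.

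The main obstacle is the component-wise matching in the previous paragraph, especially in the case where $(e, \bar 1) \in T$: in that scenario $\Gamma$ would have some two-dimensional components mixing an even and an odd line, which would need to appear as merged pairs of one-dimensional components of $\Gamma(H, \tilde t_0, k)$ under $\alpha$. Fortunately, fineness of $\Gamma$ rules this out: a mixed-parity component $A_g = A_g^{\bar 0} \oplus A_g^{\bar 1}$ with both parts nonzero can always be split by indexing $A_g^{\bar{\varepsilon}}$ with $(g, \bar{\varepsilon}) \in G \times \ZZ_2$, producing a valid proper refinement of $\Gamma$ on the superalgebra and contradicting fineness. So fine odd gradings automatically satisfy $(e, \bar 1) \notin T$, every component of $\Gamma$ is one-dimensional, and the verification of ${}^\alpha \Gamma(H, \tilde t_0, k) \iso \Gamma$ reduces to a transparent check on matrix entries.
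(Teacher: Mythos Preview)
Your proposal is correct and follows essentially the same route as the paper: the first assertion is reduced to \cite[Proposition~2.35]{livromicha}, and for the second you decompose $T=A\times B$, take $H=A$, build the isomorphism $\phi\colon T_H\to T$ matching bicharacters, pull back the parity element, extend to a homomorphism $\alpha$ on $G_H$ sending $e_i\mapsto g_i$, and conclude by the mechanism of Lemma~\ref{lemma:universal-grp}.

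The one genuine difference is the target of $\alpha$. The paper sends $\alpha$ into $G^\#$ (so that $\alpha|_{T_H}=\phi$ lands in $T\subseteq G^\#$); then ${}^\alpha\Gamma(A,t_0')\simeq\Gamma$ holds for \emph{every} odd $G$-grading, fine or not, and Lemma~\ref{lemma:universal-grp} is invoked only at the end. You instead project to $G$ via $\alpha|_{T_H}=\pi\circ\phi$, which is why you must argue separately that fineness forces $(e,\bar 1)\notin T$ to keep $\pi|_T$ injective and the component comparison clean. That argument is valid, but it is extra work that the paper's choice of codomain avoids entirely: with $\alpha$ landing in $G^\#$ there is nothing to merge and no case analysis.
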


\begin{proof}
	As in the proof of Theorem \ref{thm:class-fine-even}, the first assertion follows from \cite[Proposition 2.35]{livromicha}. 
	
	Let $\Gamma(T,\beta, \gamma)$ be an odd $G$-grading on $M(n,n)$ and let $t_0$ be its parity element. 
	Then we can find subgroups $A$ and $B$ such that $T=A\times B$ and there exists an isomorphism $\alpha: T_A \to T$ such that $\beta_A=\beta\circ(\alpha\times\alpha)$. We define $t_0' := \alpha\inv(t_0)$ and extend $\alpha$ to a homomorphism $G_A \to G$ (also denoted by $\alpha$)
	by sending the elements $e_1, \ldots, e_k$ to the entries of $\gamma$. Then ${}^\alpha \Gamma(A, t_0') \iso \Gamma$.
	
	Selecting a representative from each equivalence class of gradings of the form $\Gamma(H, t_0)$, we can apply Lemma \ref{lemma:universal-grp}, 
	which proves the second assertion.
\end{proof}

It remains to determine which of the gradings $\Gamma(H, t_0)$ are equivalent to each other.

\begin{prop}\label{prop:equiv-with-same-H}
	The gradings $\Gamma = \Gamma(H, t_0)$ and $\Gamma' = \Gamma(H, t_0')$ on $M(n,n)$ are equivalent if, and only if, there is $\alpha \in \Aut(T_H, \beta_H)$ such that $\alpha(t_0) = t_0'$.
\end{prop}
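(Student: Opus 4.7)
The plan is to argue both implications by analyzing how a superalgebra equivalence translates into conditions on $t_0$ and $t_0'$. Both $\Gamma$ and $\Gamma'$ have the same underlying $G_H$-graded associative algebra on $M_{2n}(\FF)$, namely $\Gamma(T_H, \beta_H, (e_1,\ldots,e_k))$; they differ only in the parity homomorphisms $p_0$ and $p_0'$ induced by $t_0$ and $t_0'$ as in Definition \ref{def:param-fine-odd}. Hence an equivalence $\Gamma \to \Gamma'$ amounts to a self-equivalence $\varphi$ of the underlying algebra grading that additionally intertwines $p_0$ with $p_0'$.

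For $(\Leftarrow)$, given $\alpha \in \Aut(T_H, \beta_H)$ with $\alpha(t_0) = t_0'$, the first step is to lift $\alpha$ to an automorphism $\psi$ of the graded division algebra $\D$ associated with $(T_H, \beta_H)$, i.e.\ with $\psi(\D_t) = \D_{\alpha(t)}$. Such a lift exists because the 2-cocycle class defining $\D$ is determined by its alternating part $\beta_H$, which $\alpha$ preserves. Writing $A \iso \D \otimes M_k(\FF)$ where $M_k(\FF)$ carries the elementary $\ZZ^k_0$-grading, I take $\varphi := \psi \otimes \id$; this is an algebra automorphism sending $A_g$ to $A_{\tilde\alpha(g)}$ for $\tilde\alpha := \alpha \times \id$ on $G_H = T_H \times \ZZ^k_0$. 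The remaining check that $p_0 = p_0' \circ \tilde\alpha$ reduces, since both $p_0$ and $p_0'$ vanish on $\ZZ^k_0$, to $\beta_H(t_0, t) = \beta_H(t_0', \alpha(t))$ for $t \in T_H$, which is immediate from $\alpha(t_0) = t_0'$ and $\alpha \in \Aut(\beta_H)$.

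For $(\Rightarrow)$, let $\varphi$ realize an equivalence $\Gamma \to \Gamma'$. Since both gradings are fine with universal group $G_H$ by Proposition \ref{prop:all-fine-odd}, $\varphi$ induces an automorphism $\tilde\alpha$ of $G_H$ with $\varphi(A_g) = A_{\tilde\alpha(g)}$. As $T_H$ is the torsion subgroup of $G_H = T_H \times \ZZ^k_0$, we must have $\tilde\alpha(T_H) = T_H$; set $\alpha := \tilde\alpha|_{T_H}$. Viewing $\varphi$ as a self-equivalence of $\Gamma(T_H, \beta_H, (e_1,\ldots,e_k))$, relabeling this grading by $\tilde\alpha$ yields $\Gamma(\alpha(T_H), \beta_H \circ (\alpha\inv\times\alpha\inv), \tilde\alpha\cdot\gamma)$, which by Theorem \ref{thm:classification-matrix} must coincide with the original; this forces $\alpha \in \Aut(T_H, \beta_H)$. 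The superalgebra compatibility of $\varphi$ then gives $p_0 = p_0' \circ \tilde\alpha$ on the support, hence on $G_H$. Restricting to $T_H$ and using that $\alpha$ preserves $\beta_H$, we get $\beta_H(t_0, t) = \beta_H(t_0', \alpha(t)) = \beta_H(\alpha\inv(t_0'), t)$ for all $t \in T_H$, so nondegeneracy of $\beta_H$ yields $\alpha(t_0) = t_0'$.

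The main obstacle is making rigorous the forward-direction claim that a self-equivalence of $\Gamma(T_H, \beta_H, (e_1,\ldots,e_k))$ restricts to a $(T_H, \beta_H)$-preserving automorphism of the universal group. This is handled by the relabeling-and-comparison argument above using Theorem \ref{thm:classification-matrix}, but one must carefully track that $\tilde\alpha$ preserves $T_H$ (via torsion) and then identify $\beta_H$ with the transported bicharacter. The complementary lift step in the converse direction, although standard, relies on the fact that $(T_H, \beta_H)$ determines $\D$ up to graded isomorphism, which is the very content of the classification recalled at the start of Section \ref{sec:gradings-on-matrix-algebras}.
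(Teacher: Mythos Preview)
Your proof is correct and shares the paper's overall strategy: both observe that $\Gamma$ and $\Gamma'$ have the same underlying $G_H$-grading on the algebra $M_{2n}(\FF)$ and differ only in the parity homomorphism, so an equivalence is precisely a self-equivalence of this algebra grading intertwining $p$ with $p'$. The difference is in how the key technical step --- that every such self-equivalence restricts on $T_H$ to an element of $\Aut(T_H,\beta_H)$, and conversely that every $\alpha\in\Aut(T_H,\beta_H)$ lifts to a self-equivalence --- is justified. The paper invokes \cite[Corollary 2.45]{livromicha}, which describes the Weyl group of the fine grading on $M_{2n}(\FF)$, for both directions. You instead argue directly within the paper: for $(\Leftarrow)$ you lift $\alpha$ to an automorphism $\psi$ of $\D$ (using that $(T_H,\beta_H)$ determines $\D$ up to graded isomorphism) and take $\psi\otimes\id$; for $(\Rightarrow)$ you relabel by $\tilde\alpha$ and apply Theorem~\ref{thm:classification-matrix} to force $\beta_H\circ(\alpha^{-1}\times\alpha^{-1})=\beta_H$. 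Your route is more self-contained, essentially re-deriving the relevant portion of that corollary; the paper's is shorter but imports an external black box.
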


\begin{proof}
	We will denote by $p: G_H \to \ZZ_2$ the parity homomorphism associated to the grading $\Gamma$ and by 
	$p': G_H\to \ZZ_2$ the one associated to $\Gamma'$.

	If $\Gamma$ is equivalent to $\Gamma'$, there is an isomorphism $\vphi: (M_{2n}(\FF),p) \to (M_{2n}(\FF),p')$ of superalgebras that is a 
	self-equivalence of the grading on $M_{2n}(\FF)$. Hence, we have the corresponding group automorphism $\alpha: G_H \to G_H$ in the Weyl group of the grading, 
	and the following diagram commutes: 
	\begin{equation}\label{diag:parity}
		\begin{tikzcd}
			G_H \arrow[to = 2G, "\alpha"] \arrow[to = Z2, "p"'] && |[alias = 2G]| G_H \arrow[to = Z2, "p'"]\\
			& |[alias = Z2]|\ZZ_2 &
		\end{tikzcd}
	\end{equation}
	By the definition of $p$ and $p'$, this is equivalent to $\alpha(t_0) = \alpha(t_0')$.
	The automorphism $\alpha$ must send the torsion subgroup of $G_H$ to itself, so we can consider the restriction $\alpha\restriction_{T_H}$.
	By \cite[Corrolary 2.45]{livromicha}, this restriction is in $\Aut(T_H, \beta_H)$.

	For the converse, we use the same \cite[Corrolary 2.45]{livromicha} to extend $\alpha$ to an automorphism $G_H\to G_H$ in the Weyl group. 
	Hence, there is an automorphism $\vphi$ of the algebra $M_{2n}(\FF)$ that permutes the components of the grading according to $\alpha$. 
	The condition $\alpha(t_0) = \alpha(t_0')$ is equivalent to Diagram \eqref{diag:parity} being commutative, which shows that $\vphi: (M_{2n}(\FF),p) \to (M_{2n}(\FF),p')$ is an isomorphism of superalgebras.
\end{proof}

Combining Propositions \ref{prop:all-fine-odd} and \ref{prop:equiv-with-same-H}, we obtain:

\begin{thm}\label{thm:class-fine-odd}
	Every odd fine grading on $M(n,n)$ is equivalent to some $\Gamma(H,t_0)$ as in Definition \ref{def:param-fine-odd}.
	Every grading $\Gamma(H,t_0)$ is fine, and $G_H$ is its universal group.
	Two gradings, $\Gamma(H, t_0)$ and $\Gamma(H', t_0')$, are equivalent if, and only if, $H=H'$ and $t_0'$ lies in the orbit of $t_0$
	under the natural action of $\Aut(T_H, \beta_H)$.\qed
\end{thm}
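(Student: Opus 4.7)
The strategy is to combine Propositions \ref{prop:all-fine-odd} and \ref{prop:equiv-with-same-H}, filling in only the piece that concerns distinct parameters $H \ne H'$. Proposition \ref{prop:all-fine-odd} already gives the first two assertions: it says that each $\Gamma(H,t_0)$ is fine with universal group $G_H$, and that every odd fine $G$-grading on $M(n,n)$ is equivalent to at least one $\Gamma(H,t_0)$. So only the third assertion requires additional argument, and only in one direction.

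For the ``if'' direction of the equivalence criterion, I would simply invoke Proposition \ref{prop:equiv-with-same-H}: if $H=H'$ and $t_0'=\alpha(t_0)$ for some $\alpha\in\Aut(T_H,\beta_H)$, that proposition already yields an equivalence between $\Gamma(H,t_0)$ and $\Gamma(H,t_0')$. So the real content lies in the ``only if'' direction, and specifically in showing that the parameter $H$ is an equivalence invariant.

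To prove $H=H'$, I would use that $G_H$ is the universal group of $\Gamma(H,t_0)$ (from Proposition \ref{prop:all-fine-odd}). An equivalence of group gradings induces an isomorphism between universal groups, so $G_H \iso G_{H'}$. Taking torsion subgroups gives $T_H\iso T_{H'}$, that is, $H\times\widehat H \iso H'\times \widehat{H'}$. Since $H\iso \widehat H$ for any finite abelian group, this becomes $H\times H \iso H'\times H'$, and by the uniqueness of invariant-factor decomposition of finite abelian groups, $H\iso H'$. Because $H,H'$ are chosen from the set $\Theta_{2\ell}$ of representatives of isomorphism classes, this forces $H=H'$. Once we have $H=H'$, Proposition \ref{prop:equiv-with-same-H} supplies the required $\alpha\in\Aut(T_H,\beta_H)$ with $\alpha(t_0)=t_0'$, concluding the proof.

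The main (and really only) obstacle is the invariance of $H$, and the argument above via the torsion subgroup of the universal group handles it cleanly; the rest is bookkeeping that cites the two preceding propositions.
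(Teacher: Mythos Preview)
Your proposal is correct and matches the paper's approach. The paper presents the theorem as an immediate consequence of Propositions \ref{prop:all-fine-odd} and \ref{prop:equiv-with-same-H} (note the \qed\ after the statement), leaving the reader to supply exactly the detail you spell out: that equivalence forces $H=H'$ via the isomorphism of universal groups $G_H\iso G_{H'}$, passage to torsion subgroups $T_H\iso T_{H'}$, and the cancellation $H\times H\iso H'\times H'\Rightarrow H\iso H'$ for finite abelian groups. This is the same device used explicitly in the proof of Theorem \ref{thm:class-fine-even}, so your filling-in is precisely what the paper intends.
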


For a matrix description of the group $\Aut(T_H, \beta_H)$, we refer the reader to \cite[Remark 2.46]{livromicha}.

\section{Gradings on $A(m,n)$}\label{sec:Amn}

Throughout this section it will be assumed that $\Char \FF = 0$.

\subsection{The Lie superalgebra $A(m,n)$}\label{ssec:def-A}

Let $U = U\even \oplus U\odd$ be a finite dimentional superspace. Recall that the \emph{general linear Lie superalgebra}, denoted by $\gl\, (U)$, is the superspace $\End(U)$ with product given by the \emph{supercommutator}: 
\[ 
[a,b] = ab - (-1)^{\abs{a}\abs{b}}ba.
\]
If $U\even=\FF^m$ and $U\odd=\FF^n$, then $\gl (U)$ is also denoted by $\gl(m|n)$.

The \emph{special linear Lie superalgebra}, denoted by $\Sl (U)$, is the derived algebra of $\gl(U)$. As in the Lie algebra case, we describe it as an algebra of ``traceless'' operators. The analog of trace in the ``super'' setting is the so called \emph{supertrace}:%
\[ \str \left(\begin{matrix}
	a  &  b\\
	c  &  d\\
	\end{matrix}\right) = \tr a - \tr d,
\]
and we have $\Sl (U) = \{ T\in \gl (U) \mid \Str T = 0\}$.
Again, if $U\even=\FF^m$ and $U\odd=\FF^n$ then $\Sl (U)$ is also denoted by $\Sl(m|n)$.

If one of the parameters $m$ or $n$ is zero, we get a Lie algebra, so we assume this is not the case. If $m\neq n$ then $\Sl(m|n)$ is a simple Lie superalgebra. If $m=n$, the identity map $I_{2n}\in \Sl(n|n)$ is a central element and hence $\Sl(n|n)$ is not simple, but the quotient $\mathfrak{psl}(n|n) := \Sl(n|n)/ \FF I_{2n}$ is simple if $n>1$.

For $m$,$n\geq 0$ (not both zero), the simple Lie superalgebra $A(m,n)$ is $\Sl(m+1|n+1)$ if $m\neq n$, and $\mathfrak{psl}(n+1|n+1)$ if $m=n$.

\begin{defi}\label{def:Type-I}
	If $\Gamma$ is a $G$-grading on $M(m,n)$, then, since $G$ is abelian,  it is also a grading on $\gl(m|n)$ and, hence, restricts to its derived superalgebra $\Sl(m|n)$. If $m=n$, then the grading on $\Sl(m|n)$ induces a grading on $\mathfrak{psl}(n|n)$.
	If a grading on $\Sl(m|n)$ or $\mathfrak{psl}(n|n)$ is obtained in this way, we will call it a \emph{Type I} grading and, otherwise, a \emph{Type II} grading.
\end{defi}

\subsection{Automorphisms of $A(m,n)$}\label{ssec:auto-Amn}
As in the Lie algebra case, the group of automorphisms of the Lie superalgebra $A(m,n)$ is bigger than the group of automorphisms of the associative superalgebra $\End(U)$.

We define the \emph{supertranspose} of a matrix in $\End(U)$ by
\begin{equation*} 
	\left( \begin{matrix}
		a&b\\
		c&d\\
		\end{matrix}\right)^{s\top} = \left( \begin{matrix}
		a\transp & -c\transp\\
		b\transp & d\transp\\
		\end{matrix}\right).
\end{equation*}

The supertranspose map $\End(U) \to \End(U)$ is an example of a \emph{super-anti-automorphism}, \ie, it is $\FF$-linear and
\[
	(XY)^{s\top} = (-1)^{|X||Y|} Y^{s\top} X^{s\top}.
\]
Hence, the map $\tau:\Sl(m+1,n+1)\rightarrow \Sl(m+1,n+1)$ given by $\tau(X) = - X^{s\top}$ is an automorphism.

By \cite[Theorem 1]{serganova}, the group of automorphisms of $A(m,n)$ is generated by $\tau$ and the automorphisms of $\End(U)$, which are restricted to traceless operators and, if necessary, taken modulo the center. In other words, if $m\neq n$, $\Aut(A(m,n))$ is generated by $\mc E \cup \{\tau\}$ and, if $m=n$, by $\mc E \cup \{\pi\,,\,\tau\}$. In both cases, $\mc E$ is a normal subgroup of $\Aut(A(m,n))$. Note that $\pi^2 = \id$, $\tau^2=\upsilon$ (the parity automorphism) and $\pi \tau = \upsilon \tau \pi$. Hence $\frac{\Aut(A)}{\mc E}$ is isomorphic to $\ZZ_{2}$ if $m\neq n$ and $\ZZ_2\times\ZZ_2$ if $m=n$.

Note that a $G$-grading on $A(m,n)$ is of Type I if, and only if, it corresponds to a $\widehat{G}$-action on $A(m,n)$ by automorphisms that belong to 
the subgroup $\mc E$ if $m\ne n$ and to $\mc E\rtimes\langle\pi\rangle$ if $m=n$. If $\widehat{G}$ acts by automorphisms that belong to $\mc E$ then the 
Type I grading is said to be \emph{even} and, otherwise, \emph{odd}.

\subsection{Superdual of a graded module}\label{ssec:superdual}

We will need the following concepts. Let $\D$ be an associative superalgebra with a grading by an abelian group $G$, so we may consider $\D$ graded by the group $G^\# = G\times \ZZ_2$. Let $\U$ be a $G^\#$-graded \emph{right} $\D$-module. The parity $|x|$ of a homogeneous element $x \in \D$ or $x\in \U$ is determined by $\deg x \in G^\#$. The \emph{superdual module} of $\U$ is $\U\Star = \Hom_\D (\U,\D)$, with its natural $G^\#$-grading and the $\D$-action defined on the \emph{left}: if $d \in \D$ and $f \in \U \Star$, then $(df)(u) = d\, f(u)$ for all $u\in \mc U$.

We define the \emph{opposite superalgebra} of $\D$, denoted by $\D\sop$, to be the same graded superspace $\D$, but with a new product $a*b = (-1)^{|a||b|} ba$ for every pair of $\ZZ_2$-homogeneous elements $a,b \in \D$. The left $\D$-module $\U\Star$ can be considered as a right $\D\sop$-module by means of the action defined by $f\cdot d := (-1)^{|d||f|} df$, for every $\ZZ_2$-homogeneous $d\in \D$ and $f\in \U\Star$.

\begin{lemma}\label{lemma:Dsop}
	If $\D$ is a graded division superalgebra associated to the pair $(T,\beta)$, then $\D\sop$ is associated to the pair $(T,\beta\inv)$.\qed
\end{lemma}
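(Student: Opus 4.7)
The plan is to perform the short direct computation of the bicharacter of $\D\sop$ from its definition, using the chosen homogeneous generators $X_t \in \D_t$.

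First I would fix, for each $t \in T$, a nonzero homogeneous element $X_t \in \D_t$, so that by definition $X_uX_v=\beta(u,v)X_vX_u$ for all $u,v\in T$. Since $\D\sop$ has the same underlying $G^\#$-graded superspace as $\D$, its support is again $T$ and the same elements $X_t$ may be used as homogeneous generators. Let $\beta'$ denote the bicharacter of $\D\sop$, characterized by $X_u * X_v = \beta'(u,v)\, X_v * X_u$.

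Next I would compute both sides using the defining formula $a*b=(-1)^{|a||b|}ba$. Writing $\epsilon := (-1)^{p(u)p(v)}$, we get
\[
X_u * X_v = \epsilon\, X_vX_u \quad\text{and}\quad X_v * X_u = \epsilon\, X_uX_v,
\]
so the two signs cancel and the relation $X_u*X_v=\beta'(u,v)\,X_v*X_u$ reduces to $X_vX_u=\beta'(u,v)X_uX_v$. Comparing with $X_vX_u=\beta(v,u)X_uX_v$ gives $\beta'(u,v)=\beta(v,u)$. Finally, since $\beta$ is an alternating bicharacter, $1=\beta(uv,uv)=\beta(u,v)\beta(v,u)$, hence $\beta(v,u)=\beta(u,v)^{-1}$, and therefore $\beta'=\beta^{-1}$.

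There is no real obstacle here: the key observation is simply that the super sign $(-1)^{p(u)p(v)}$ appears symmetrically on both sides of the commutation relation in $\D\sop$ and thus cancels, reducing the question to the ordinary opposite algebra computation. The only thing to verify along the way is that $\D\sop$ is again a graded division superalgebra, which is immediate because invertibility of a homogeneous element in $\D$ is equivalent to invertibility in $\D\sop$ (a left/right inverse $X_t^{-1}$ in $\D$ gives $(\pm 1)X_t^{-1}$ as the $*$-inverse of $X_t$, up to a sign determined by $p(t)$), so $\D\sop$ is indeed associated to the pair $(T,\beta^{-1})$.
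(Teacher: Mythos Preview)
Your proof is correct; the computation that the super sign cancels symmetrically and reduces the question to the ordinary opposite algebra is exactly the point. The paper itself gives no proof of this lemma (it is marked with a \qed\ immediately after the statement), so your direct verification is precisely the routine check the authors considered self-evident.
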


If $\U$ has a homogeneous $\D$-basis $\mc B = \{e_1, \ldots, e_k\}$, we can consider its \emph{superdual basis} $\mc B\Star = \{e_1\Star, \ldots, e_k\Star\}$ in $\U\Star$, where $e_i\Star : \U \rightarrow \D$ is defined by $e_i\Star (e_j) = (-1)^{|e_i||e_j|} \delta_{ij}$.

\begin{remark}\label{rmk:gamma-inv}
	The superdual basis is a homogeneous basis of $\U\Star$, with $\deg e_i\Star = (\deg e_i)\inv$. So, if $\gamma = (g_1, \ldots, g_k)$ is the $k$-tuple of degrees of $\mc B$, then $\gamma\inv = (g_1\inv, \ldots, g_k\inv)$ is the $k$-tuple of degrees of $\mc B\Star$.
\end{remark}

For graded right $\D$-modules $\U$ and $\V$, we consider $\U\Star$ and $\V\Star$ as right $\D\sop$-modules as defined above. If $L:\U \rightarrow \V$ is a $\ZZ_2$-homogeneous $\D$-linear map, then the \emph{superadjoint} of $L$ is the $\D\sop$-linear map $L\Star: \V\Star \rightarrow \U\Star$ defined by $L\Star (f) = (-1)^{|L||f|} f \circ L$. We extend the definition of superadjoint to any map in $\Hom_\D (\U, \V)$ by linearity.

\begin{remark}
	In the case $\D=\FF$, if we denote by $[L]$ the matrix of $L$ with respect to the homogeneous bases $\mc B$ of $\U$ and $\mc C$ of $\V$, then the supertranspose $[L]\sT$ is the matrix corresponding to $L\Star$ with respect to the superdual bases $\mc C\Star$ and $\mc B\Star$.
\end{remark}

We denote by $\vphi: \End_\D (\U) \rightarrow \End_{\D\sop} (\U\Star)$ the map $L \mapsto L\Star$. It is clearly a degree-preserving super-anti-isomorphism. It follows that, if we consider the Lie superalgebras $\End_\D (U)^{(-)}$ and $\End_{\D\sop} (U\Star)^{(-)}$, the map $-\vphi$ is an isomorphism.

We summarize these considerations in the following result:

\begin{lemma}\label{lemma:iso-inv}
	If $\Gamma = \Gamma(T,\beta,\gamma)$ and $\Gamma' = \Gamma(T,\beta\inv,\gamma\inv)$ are $G$-gradings (considered as $G^\#$-gradings) on the associative superalgebra $M(m,n)$, then, as gradings on the Lie superalgebra $M(m,n)^{(-)}$, $\Gamma$ and $\Gamma'$ are isomorphic via an automorphism of $M(m,n)^{(-)}$ that is the negative of a super-anti-automorphism of 	$M(m,n)$.
\end{lemma}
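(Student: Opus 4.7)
The plan is to realize both sides concretely, apply the superadjoint, and then take the negative.

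First I would fix a standard realization $\D$ of the graded division algebra associated with $(T,\beta)$ and a homogeneous right $\D$-basis $\{e_1,\dots,e_k\}$ of a graded module $\U$ whose degree tuple is $\gamma=(g_1,\dots,g_k)$, so that $\End_\D(\U)$, identified with $M(m,n)$ by the chosen basis, carries precisely the grading $\Gamma=\Gamma(T,\beta,\gamma)$. Next I would pass to the superdual: by Lemma \ref{lemma:Dsop}, $\D\sop$ is the opposite superalgebra associated with $(T,\beta\inv)$, and by Remark \ref{rmk:gamma-inv}, the superdual basis $\{e_1\Star,\dots,e_k\Star\}$ of $\U\Star$ has degree tuple $\gamma\inv=(g_1\inv,\dots,g_k\inv)$. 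Using this data one identifies $\End_{\D\sop}(\U\Star)$ with $M(m,n)$ in such a way that the resulting grading is exactly $\Gamma'=\Gamma(T,\beta\inv,\gamma\inv)$.

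Next I would invoke the map $\vphi\colon L\mapsto L\Star$ introduced just before the statement. It is a degree-preserving super-anti-isomorphism $\End_\D(\U)\to\End_{\D\sop}(\U\Star)$, hence, after composing with the two identifications of these algebras with $M(m,n)$, it becomes a super-anti-automorphism $\psi$ of $M(m,n)$ which sends the $g$-component of $\Gamma$ to the $g$-component of $\Gamma'$ for every $g\in G$. The final step is to pass from $\psi$ to $-\psi$. A short $\ZZ_2$-graded calculation on homogeneous $X,Y$ shows
\[
\psi([X,Y])=\psi(XY)-(-1)^{|X||Y|}\psi(YX)=(-1)^{|X||Y|}\psi(Y)\psi(X)-\psi(X)\psi(Y)=-[\psi(X),\psi(Y)],
\]
so that $-\psi$ is an automorphism of $M(m,n)^{(-)}$. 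Since $-\psi$ has the same effect on homogeneous components as $\psi$, it is the desired isomorphism of $G$-graded Lie superalgebras from $(M(m,n)^{(-)},\Gamma)$ to $(M(m,n)^{(-)},\Gamma')$.

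The only delicate point I anticipate is the identification step: the standard realization of $\D\sop$ as a matrix model with parameters $(T,\beta\inv)$ need not literally coincide with the opposite algebra $\D\sop$, so one has to choose the identification of $\End_{\D\sop}(\U\Star)$ with $M(m,n)$ carefully in order to produce exactly $\Gamma(T,\beta\inv,\gamma\inv)$ rather than a grading that merely has the same isomorphism class. Once that bookkeeping is done, the structural content of the proof is just Lemma \ref{lemma:Dsop}, Remark \ref{rmk:gamma-inv}, and the sign check above.
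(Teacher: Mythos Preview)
Your proposal is correct and follows essentially the same route as the paper: realize $\Gamma$ via $\End_\D(\U)$, realize $\Gamma'$ via $\End_{\D\sop}(\U\Star)$ using Lemma~\ref{lemma:Dsop} and Remark~\ref{rmk:gamma-inv}, and transport the superadjoint $-\vphi$ through the two identifications. Your explicit sign check that $-\psi$ respects the superbracket is exactly what the paper records just before the lemma. As for your ``delicate point'': it is not really an obstacle, since the symbol $\Gamma(T,\beta\inv,\gamma\inv)$ already allows a choice of identification, and in any case composing with an associative superalgebra isomorphism on the $\Gamma'$ side still yields the negative of a super-anti-automorphism.
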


\begin{proof}
	Let $\D$ be a graded division superalgebra associated to $(T,\beta)$ and let $\U$ be the graded right $\D$-module associated to $\gamma$. The grading $\Gamma$ is obtained by an identification $\psi: M(m, n) \xrightarrow{\sim} \End_\D (\U)$. By Lemma \ref{lemma:Dsop} and Remark \ref{rmk:gamma-inv}, $\Gamma'$ is obtained by an identification $\psi': M(m, n) \xrightarrow{\sim} \End_{\D\sop} (\U\Star)$. Hence we have the diagram:

	\begin{center}
		\begin{tikzcd}
			& \End_\D (\U) \arrow[to=3-2, "-\vphi"]\\
			M(m, n) \arrow[ur, "\psi"] \arrow[dr, "\psi'"]\\
			& \End_{\D\sop} (\U\Star)
		\end{tikzcd}
	\end{center}

	Thus, the composition $(\psi')\inv \, (-\vphi) \, \psi$ is an automorphism of the Lie superalgebra $M(m,n)^{(-)}$ sending $\Gamma$ to $\Gamma'$.
\end{proof}

\subsection{Type I gradings on $A(m,n)$}

In this work, we only classify the gradings on $A(m,n)$ that are induced from the associative algebra $M(m+1, n+1)$.

\begin{defi}\label{def:grd-on-Amn-I}
	If $\Gamma(T, \beta, \gamma_0,\gamma_1)$ is an even grading on $M(m+1,n+1)$ (see Definition \ref{def:even-grd-on-Mmn}), we denote by $\Gamma_A (T, \beta, \gamma_0,\gamma_1)$ the induced grading on $A(m,n)$. Analogously, if $\Gamma(T, \beta, \gamma)$, or alternatively $\Gamma(t_0, \barr T, \barr\beta, u, \gamma)$, is an odd grading on $M (n+1,n+1)$ (see Definitions \ref{def:odd-grd-on-Mmn-1} and \ref{def:odd-grd-on-Mmn-2}), we denote by $\Gamma_A (T, \beta, \gamma)$, respectively $\Gamma_A (t_0, \barr T, \barr\beta, u, \gamma)$, the induced grading on $A(n,n)$. (Recall that odd gradings can occur only if $m=n$.)
\end{defi}

\begin{thm}\label{thm:even-Lie-iso}
	If a $G$-grading of Type I on the Lie superalgebra $A(m,n)$ is even, then it is isomorphic to some $\Gamma_A(T,\beta, \gamma_0, \gamma_1)$ as in Definition \ref{def:grd-on-Amn-I}.
	Two such gradings, $\Gamma=\Gamma_A(T,\beta, \gamma_0, \gamma_1)$ and $\Gamma'=\Gamma_A (T',\beta', \gamma_0', \gamma_1')$, are isomorphic if, and only if, $T=T'$ and there are $\delta\in \{\pm 1\}$ and $g\in G$ such that $\beta^\delta=\beta'$ and
	\begin{enumerate}[(i)]
		\item for $m\neq n$: $g \Xi (\gamma_0^\delta) =\Xi(\gamma_0')$ and $g \Xi (\gamma_1^\delta) =\Xi(\gamma_1')$;

		\item for $m = n$: either $g \Xi(\gamma_0^\delta)=\Xi(\gamma_0')$ and $g \Xi(\gamma_1^\delta)=\Xi(\gamma_1')$ or $g\Xi(\gamma_0^\delta)=\Xi(\gamma_1')$ and $g \Xi(\gamma_1^\delta)=\Xi(\gamma_0')$.
	\end{enumerate}
\end{thm}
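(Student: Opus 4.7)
My approach is to reduce the classification to the associative case (Theorem~\ref{thm:even-assc-iso}) and then account for the extra automorphism $\tau$ of $A(m,n)$ using Lemma~\ref{lemma:iso-inv}.

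\emph{Existence.} Since the grading is Type I, it is the restriction to $A(m,n)$ of some $G$-grading on $M(m+1,n+1)$. When $m\ne n$, every such grading on $M$ is automatically even. When $m=n$, the hypothesis that the grading on $A$ is even (i.e., the corresponding $\widehat G$-action consists of elements of $\mc E$) forces the $\widehat G$-action on $M(n+1,n+1)$ to lie in $\mc E\subseteq\Aut(M)$, and Proposition~\ref{prop:3-equiv-even-morita-action} then ensures that the lifted grading is even. Applying Theorem~\ref{thm:even-assc-iso} to this lift produces parameters $(T,\beta,\gamma_0,\gamma_1)$, and the original grading is isomorphic to $\Gamma_A(T,\beta,\gamma_0,\gamma_1)$.

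\emph{Isomorphism criterion.} Write $\Gamma=\Gamma_A(T,\beta,\gamma_0,\gamma_1)$ and $\Gamma'=\Gamma_A(T',\beta',\gamma_0',\gamma_1')$. By Subsection~\ref{ssec:auto-Amn}, $\Aut(A(m,n))=\mc E\cdot\langle\pi,\tau\rangle$ (with $\pi$ omitted when $m\ne n$), and I split the cosets modulo~$\mc E$ into two classes: the even automorphisms $\mc E\cup\pi\mc E$, which lift to automorphisms of the $\ZZ$-superalgebra $M(m+1,n+1)$, and the $\tau$-shifted cosets $\tau\mc E\cup\tau\pi\mc E$. Any isomorphism in the first class lifts to an isomorphism of the even gradings on $M(m+1,n+1)$ and, by Theorem~\ref{thm:even-assc-iso}, yields exactly the $\delta=1$ alternative (with the $\pi$-coset producing the swap of $\gamma_0$ and $\gamma_1$ in case~(ii)). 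For the second class, I invoke Lemma~\ref{lemma:iso-inv}: the even $G^\#$-grading $\Gamma(T,\beta,\gamma_0,\gamma_1)$ on $M(m+1,n+1)^{(-)}$ is isomorphic to $\Gamma(T,\beta^{-1},\gamma_0^{-1},\gamma_1^{-1})$ via the negative of a super-anti-automorphism, which is precisely $\tau$ composed with an inner automorphism. This map preserves $\Sl(m+1,n+1)$ together with its center, so it descends to $A(m,n)$; composing with first-class isomorphisms then produces precisely the $\delta=-1$ alternative, and the two classes together exhaust $\Aut(A(m,n))/\mc E$.

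\emph{Main difficulty.} The principal technical point is to verify that Lemma~\ref{lemma:iso-inv}, stated there for a single tuple $\gamma$, applies componentwise to an even grading parametrized by $(\gamma_0,\gamma_1)$. Combining Remark~\ref{rmk:gamma-inv} with the fact that the supertranspose is parity-preserving, the superdual of a homogeneous $\D$-basis retains the parity pattern of $(\gamma_0,\gamma_1)$ while inverting each degree; hence $(T,\beta,\gamma_0,\gamma_1)$ is sent to $(T,\beta^{-1},\gamma_0^{-1},\gamma_1^{-1})$, and no spurious swap of $\gamma_0$ with $\gamma_1$ enters the $\tau$-coset analysis beyond the one already produced by $\pi$ when $m=n$.
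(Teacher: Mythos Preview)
Your argument follows the same route as the paper: reduce to Theorem~\ref{thm:even-assc-iso} on $M(m+1,n+1)$ for $\delta=1$, and use Lemma~\ref{lemma:iso-inv} to translate the $\tau$-coset into the $\delta=-1$ alternative. Two small terminological slips to clean up: the coset $\pi\mc E$ consists of \emph{odd} automorphisms of $M(n+1,n+1)$, not even ones, and those automorphisms do not preserve the canonical $\ZZ$-grading (only the $\ZZ_2$-grading); fortunately Theorem~\ref{thm:even-assc-iso} is stated for the $\ZZ_2$-superalgebra, so your invocation of it is still correct. One point the paper makes explicit that you leave implicit is the \emph{uniqueness} of the lift: a Type~I grading on $A(m,n)$ comes from a unique grading on $M(m+1,n+1)$, which is what guarantees that an isomorphism $\Gamma_A\to\Gamma_A'$ lifting to $\Aut(M)$ actually sends $\Gamma$ to $\Gamma'$ upstairs.
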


\begin{proof}
	Let $M = M(m+1, n+1)$. Since any automorphism of $M$ induces an automorphism of $A(m,n)$, the first assertion follows from Theorem \ref{thm:even-assc-iso} and the definition of Type I grading.

	We know from Subsection \ref{ssec:auto-Amn} that every automorphism of $A(m, n)$ arises from an automorphism of $M$ or the negative of a super-anti-automorphism of $M$. Moreover, this automorphism or super-anti-automorphism is uniquely determined and, hence, any Type I grading on $A(m,n)$ is induced by a unique grading on $M$. It follows that $\Gamma$ and $\Gamma'$ are isomorphic if, and only if, there exists either $(a)$ an automorphism or $(b)$ a super-anti-automorphism of $M$ sending $\Gamma (T,\beta, \gamma_0, \gamma_1)$ to $\Gamma (T',\beta', \gamma_0', \gamma_1')$.

	From Theorem \ref{thm:even-assc-iso}, we know that case $(a)$ holds if, and only if, the above conditions are satisfied with $\delta = 1$.

	From Lemma \ref{lemma:iso-inv}, there is an automorphism of $A(m,n)$ coming from a super-anti-automorphism of $M$ that sends $\Gamma (T,\beta, \gamma_0, \gamma_1)$ to $\Gamma (T,\beta\inv, \gamma_0\inv, \gamma_1\inv)$. It follows that case $(b)$ holds if, and only if, the above conditions are satisfied with $\delta = -1$.
\end{proof}

\begin{thm}\label{thm:first-odd-Lie-iso}
	If a $G$-grading of Type I on the Lie superalgebra $A(n,n)$ is odd, then it is isomorphic to some $\Gamma_A(T,\beta,\gamma)$ as in Definition \ref{def:grd-on-Amn-I}.
	Two such gradings, $\Gamma_A (T,\beta, \gamma)$ and $\Gamma_A (T',\beta', \gamma')$, are isomorphic if, and only if, $T=T'$, and there are $\delta \in \{\pm 1\}$ and $g\in G$ such that $\beta^\delta=\beta'$ and $g \Xi(\gamma^\delta)=\Xi(\gamma')$.
\end{thm}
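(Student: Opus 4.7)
The plan is to follow the proof of Theorem \ref{thm:even-Lie-iso}, adapted to the odd setting. Throughout, write $M := M(n+1, n+1)$.

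For the existence statement, let $\Gamma_A$ be a Type I odd $G$-grading on $A(n,n)$. By Definition \ref{def:Type-I}, $\Gamma_A$ is induced by some $G$-grading $\Gamma$ on $M$. The criterion at the end of Subsection \ref{ssec:auto-Amn} says that the $\widehat G$-action corresponding to $\Gamma$ is not entirely by elements of $\mc E$, so by Proposition \ref{prop:3-equiv-even-morita-action} the grading $\Gamma$ must itself be odd on $M$. Theorem \ref{thm:first-odd-iso} then gives that $\Gamma \iso \Gamma(T,\beta,\gamma)$ for some admissible triple, whence $\Gamma_A \iso \Gamma_A(T,\beta,\gamma)$.

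For the classification up to isomorphism, the central observation from Subsection \ref{ssec:auto-Amn} is that every automorphism of $A(n,n)$ lifts either to an automorphism of $M$ or to the negative of a super-anti-automorphism of $M$, and the lift is uniquely determined; in particular, each Type I grading on $A(n,n)$ is induced by a unique grading on $M$. Hence $\Gamma_A(T,\beta,\gamma) \iso \Gamma_A(T',\beta',\gamma')$ if, and only if, either (a) some automorphism of $M$, or (b) the negative of some super-anti-automorphism of $M$, sends $\Gamma(T,\beta,\gamma)$ to $\Gamma(T',\beta',\gamma')$. In case (a), Theorem \ref{thm:first-odd-iso} yields precisely the $\delta = 1$ conditions: $T = T'$, $\beta = \beta'$, and $g\,\Xi(\gamma) = \Xi(\gamma')$ for some $g \in G$. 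For case (b), Lemma \ref{lemma:iso-inv} provides an automorphism of $M^{(-)}$, induced by the negative of a super-anti-automorphism of $M$, that sends $\Gamma(T,\beta,\gamma)$ to $\Gamma(T,\beta\inv,\gamma\inv)$. Composing with an automorphism of $M$, case (b) reduces to case (a) applied to $\Gamma(T,\beta\inv,\gamma\inv)$ in place of $\Gamma(T,\beta,\gamma)$, which yields the $\delta = -1$ conditions.

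I do not anticipate any serious obstacle: the argument is essentially a transcription of the proof of Theorem \ref{thm:even-Lie-iso}, with the single-tuple parametrization in place of $(\gamma_0,\gamma_1)$ and with the swap-between-$\gamma_0$ and $\gamma_1$ alternative simply absent. The one small point to verify is that $\Gamma(T,\beta\inv,\gamma\inv)$ is again an odd grading so that Lemma \ref{lemma:iso-inv} is being applied within the correct class; this is immediate, since oddness is the condition $T\not\subseteq G$ on the support alone, which is unchanged under $\beta \mapsto \beta\inv$ and $\gamma \mapsto \gamma\inv$.
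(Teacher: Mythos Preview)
Your proposal is correct and follows essentially the same approach as the paper. Indeed, the paper's own proof consists of the single sentence ``The same as for Theorem \ref{thm:even-Lie-iso}, but referring to Theorem \ref{thm:first-odd-iso} instead of Theorem \ref{thm:even-assc-iso},'' and you have spelled out precisely that transcription, including the additional sanity check that $\Gamma(T,\beta^{-1},\gamma^{-1})$ remains odd.
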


\begin{proof}
	The same as for Theorem \ref{thm:even-Lie-iso}, but referring to Theorem \ref{thm:first-odd-iso} instead of Theorem \ref{thm:even-assc-iso}.
\end{proof}

The parameters $T$, $\beta$ and $\gamma$ in Theorem \ref{thm:first-odd-Lie-iso} are associated to the group $G^\#$, not $G$. Below we use parameters associated to $G$, as we did in Subsection \ref{ssec:second-odd}.

\begin{cor}\label{cor:2nd-odd-Lie-iso}
	If a $G$-grading of Type I on the Lie superalgebra $A(n,n)$ is odd, then it is isomorphic to some $\Gamma_A (t_0, \barr T, \barr\beta, u, \gamma)$.
	Two such gradings, $\Gamma_A (t_0, \barr T, \barr \beta, u, \gamma)$ and $\Gamma_A (t_0', \barr T', \barr \beta', u', \gamma')$, are isomorphic if, and only if, $t_0=t_0'$, $\barr T = \barr T'$, and there are $\delta \in \{\pm 1\}$ and $g\in G$ such that $\barr\beta^\delta = \barr\beta'$, $u^\delta \equiv u' \,\, (\operatorname{mod}\,\, \langle t_0 \rangle)$ and $g\, \Xi(\gamma^\delta) = \Xi(\gamma')$.
\end{cor}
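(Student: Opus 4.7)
The plan is to derive Corollary \ref{cor:2nd-odd-Lie-iso} from Theorem \ref{thm:first-odd-Lie-iso} by translating the parameters $(T,\beta)$ into $(t_0,\bar T,\bar\beta,u)$ by means of Proposition \ref{prop:roots-of-a}, in exactly the way Theorem \ref{thm:2nd-odd-iso} was obtained from Theorem \ref{thm:first-odd-iso} in the associative setting. The existence assertion is immediate: by Definition \ref{def:Type-I}, any odd Type I grading on $A(n,n)$ is induced from an odd grading on $M(n+1,n+1)$, which can be written as $\Gamma(t_0,\bar T,\bar\beta,u,\gamma)$ by Definition \ref{def:odd-grd-on-Mmn-2}; restriction then yields a grading of the form $\Gamma_A(t_0,\bar T,\bar\beta,u,\gamma)$. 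The content is in the isomorphism criterion: Theorem \ref{thm:first-odd-Lie-iso} says $\Gamma_A(T,\beta,\gamma)\iso\Gamma_A(T',\beta',\gamma')$ iff there exist $\delta\in\{\pm 1\}$ and $g\in G$ with $T=T'$, $\beta^\delta=\beta'$, and $g\,\Xi(\gamma^\delta)=\Xi(\gamma')$, and I must rewrite ``$T=T'$, $\beta^\delta=\beta'$'' in terms of the new parameters.

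For $\delta=1$ this is exactly the statement of Proposition \ref{prop:roots-of-a}, giving $t_0=t_0'$, $\bar T=\bar T'$, $\bar\beta=\bar\beta'$, and $u\equiv u' \pmod{\langle t_0\rangle}$ (and is essentially the associative criterion of Theorem \ref{thm:2nd-odd-iso}). For $\delta=-1$, I would trace how $(t_0,\bar T,\bar\beta,u)$ transform under $\beta\mapsto\beta^{-1}$. The parity element is characterized by $\beta(t_0,T^+)=1$ and $\beta(t_0,T^-)=-1$, a condition preserved under inversion, so both $t_0$ and $T^+$ (hence $\bar T$) are unchanged, while $\bar\beta$ simply inverts. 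The subtle point is the transformation of $u$.

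Recall that, with $\chi\in\widehat{T^+}$ satisfying $\chi(t_0)=-1$ kept fixed, the element $a$ is uniquely determined by $\chi(a)=1$ and $\chi^2(\bar t)=\bar\beta(\bar a,\bar t)$ for all $t\in T^+$, and $u^2=a$. Replacing $\bar\beta$ by $\bar\beta^{-1}$ forces $\bar a\mapsto\bar a^{-1}$, hence $a\mapsto a^{-1}$ (note $\chi(a^{-1})=\chi(a)^{-1}=1$), and therefore $u$ must be replaced by $u^{-1}$ modulo $\langle t_0\rangle$. This yields the condition $u^{-1}\equiv u' \pmod{\langle t_0\rangle}$, matching $u^\delta\equiv u' \pmod{\langle t_0\rangle}$ with $\delta=-1$. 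Combining the two cases into a single criterion with $\delta\in\{\pm 1\}$ gives the corollary. The main obstacle is the $u\mapsto u^{-1}$ verification, but since $\chi$ is determined by the invariant data $t_0$ and $\bar T$, it is legitimate to use the same $\chi$ on both sides, and the argument becomes routine.
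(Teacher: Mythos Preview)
Your proposal is correct and follows essentially the same approach as the paper, whose entire proof reads ``Follows from Theorems \ref{thm:first-odd-Lie-iso} and \ref{thm:2nd-odd-iso}.'' You have simply unpacked this: the $\delta=1$ case is precisely the translation given by Proposition \ref{prop:roots-of-a} (equivalently Theorem \ref{thm:2nd-odd-iso}), and for $\delta=-1$ you supply the verification---left implicit in the paper---that the pair $(T,\beta^{-1})$ corresponds to the parameters $(t_0,\bar T,\bar\beta^{-1},u^{-1})$, using that $\chi$ is fixed by $t_0$ and $\bar T$ alone and that $t_1^{-1}=(u^{-1},\bar 1)$ satisfies $\beta^{-1}(t_1^{-1},t)=\beta(t_1,t)=\chi(t)$.
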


\begin{proof}
	Follows from Theorems \ref{thm:first-odd-Lie-iso} and \ref{thm:2nd-odd-iso}.
\end{proof}

\section{Gradings on $P(n)$}\label{sec:Pn}

Throughout this section it will be assumed that $\Char \FF = 0$.

\subsection{The Lie superalgebra $P(n)$}\label{subseq:Pn}

Let $U = U\even \oplus U\odd$ be a superspace and let $\langle\, , \rangle: U\times U\rightarrow \FF$ be a bilinear form that is homogeneous with respect to the $\ZZ_2$-grading, i.e., has parity as a linear map $U\tensor U \rightarrow \FF$.
We say that $\langle\, , \rangle$ is \emph{supersymmetric} if $\langle x,y\rangle = (-1)^{\abs{x}\abs{y}} \langle y,x \rangle$ for all homogeneous elements $x,y\in U$.

From now on, we suppose that $\langle\, , \rangle$ is supersymmetric, nondegenerate, and odd.
The \emph{periplectic Lie superalgebra} $\mathfrak{p}(U)$ is defined as $\mathfrak{p}(U)\even \oplus \mathfrak{p}(U)\odd$ where
\[
	\mathfrak{p}(U)^{i} = \{L\in \gl(U)^i\mid \langle L(x),y\rangle = - (-1)^{i\abs{x}} \langle x,L(y)\rangle\}
\]
for all $i\in\Zmod2$.
The superalgebra $\mathfrak{p}(U)$ is not simple, but its derived superalgebra $P(U) = [\mathfrak{p}(U),\mathfrak{p}(U)]$ is simple if $\Dim U \geq 6$.

Since $\langle\, , \rangle$ is nondegenerate and odd, it is clear that $U\odd$ is isomorphic to $(U\even)^*$ by $u \mapsto \langle u, \cdot\rangle $.
Writing $U\even = V$, we can identify $U$ with $V\oplus V^*$.
Since $\langle \, ,  \rangle$ is supersymmetric, with this identification we have
\[
	\langle v_1+v^*_1,v_2 + v_2^* \rangle =  v_1^* (v_2) + v_2^*(v_1)
\]
for all $v_1, v_2\in V$ and $v_1^*, v_2^*\in V^*$.
Hence, $P(U)$ is a subsuperspace of
\[
	\End(U) = \End(V \oplus V^*) =
	\begin{pmatrix}
		\End (V) & \Hom (V^*, V)\\
		\Hom (V, V^*) & \End(V^*)
	\end{pmatrix}
\]
given by
\[
	P(U) = \left\{\left(\begin{matrix}
	a  &  b        \\
	c  &  -a^*\\
	\end{matrix}
	\right)\Big| \,\tr a = 0,\, b=b^* \AND c=-c^*\right\}.
\]

In the case $V=\FF^{n+1}$, we write $\mathfrak{p}(n)$ for $\mathfrak{p}(U)$ and define $P(n) = [\mathfrak{p}(n),\mathfrak{p}(n)]$, where $n\geq 2$.
Using the standard basis of $V$, we can identify $P(n)$ with the following subsuperalgebra of $M(n+1,n+1)^{(-)}$:
\begin{equation}\label{eq:Pn-abstract}
	P(n) = \left\{\left(\begin{matrix}
	a  &  b   \\
	c  &  -a\transp\\
	\end{matrix}
	\right)\Big| \,\tr a = 0,\, b=b\transp \AND c=-c\transp\right\}.
\end{equation}

One can readily check that $P(U)$ is a graded subspace of $\End (U)$ equipped with its canonical $\ZZ$-grading, so we have $P(U) = P(U)\inv \oplus P(U)^0 \oplus P(U)^1$.
Also, the map $\iota: \Sl(n+1) \rightarrow P(n)^0$ given by
$
	\iota(a) = \left(\begin{matrix}
	a  &  0   \\
	0  &  -a\transp\\
	\end{matrix}
	\right)
$
is an isomorphism of Lie algebras.
If we identify $\Sl(n+1)$ and $P(n)^0$ via this map, then $P(n)^{-1} \iso \mathrm{S}^2 (U\even) \iso V_{2\pi_1}$ and $P(n)^1 \iso \Exterior^2 (U\odd) \iso V_{\pi_{n-1}}$  as modules over $P(n)^0$, where $\pi_i$ denotes the $i$-th fundamental weight of $\Sl(n+1)$.

\subsection{Automorphisms of $P(n)$}

The automorphisms of $P(n)$ were originally described by V.
Serganova (see \cite[Theorem 1]{serganova}).
We give a more explicit description of the automorphism group that we will use for our purposes.

\begin{lemma}\label{lemma:Pn-generates-Mmn}
	Let $U$ be a finite-dimensional superspace equipped with a supersymmetric nondegenerate odd bilinear form.
	The subset $P(U)$ generates  $\End (U)$ as an associative superalgebra.
\end{lemma}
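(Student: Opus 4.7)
Let $A \subseteq \End(U)$ be the associative subsuperalgebra generated by $P(U)$. Since $P(U) \subseteq \End(U)^{-1}\oplus \End(U)^0 \oplus \End(U)^1$ and the canonical $\ZZ$-grading on $\End(U)$ has no components in degrees $|i|\ge 2$, any product of three or more factors of the same nonzero $\ZZ$-degree vanishes. Hence $A$ is a $\ZZ$-graded subspace of $\End(U)$ with support in $\{-1,0,1\}$, and the plan is to establish $A^i = \End(U)^i$ for each $i\in\{-1,0,1\}$ using the explicit matrix description of Equation \eqref{eq:Pn-abstract}.

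For the degree-$0$ piece, $A^0$ contains $\iota(\Sl(n+1)) = P(U)^0$ as well as the products $P(U)^{-1}P(U)^1$ and $P(U)^1 P(U)^{-1}$, which are block-diagonal matrices with blocks $bc$ (respectively $cb$) for $b$ symmetric and $c$ skew-symmetric. Since $\tr(bc) = \tr(cb) = 0$ automatically, and short matrix-unit computations such as $E_{ii}(E_{ij}-E_{ji}) = E_{ij}$ for $i\ne j$ and $(E_{ij}+E_{ji})(E_{ij}-E_{ji}) = E_{jj}-E_{ii}$ show the $\FF$-span of $\{bc\}$ (and of $\{cb\}$) is all of $\Sl(n+1)$, $A^0$ thus contains block-diagonal matrices with any pair of traceless blocks. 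To recover the trace component, I would compute one length-$4$ product, e.g., $(E_{11}+E_{22})(E_{12}-E_{21})(E_{11}+E_{22})(E_{12}-E_{21}) = -(E_{11}+E_{22})$, whose trace is nonzero; combined with the traceless matrices already obtained in each block, this yields $A^0 = \End(V)\oplus \End(V^*) = \End(U)^0$, which in particular contains the identity of $\End(U)$.

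For the degrees $\pm 1$, I would use that $A$ is an $A^0$-bimodule, so $A^1 \supseteq \End(V^*)\cdot P(U)^1\cdot \End(V)$, which, in the lower-left block, is $\{y'cx : x, y'\in M_{n+1}(\FF),\ c \text{ skew-symmetric}\}$ --- the two-sided ideal of $M_{n+1}(\FF)$ generated by skew-symmetric matrices, hence all of $M_{n+1}(\FF)$ by simplicity. Therefore $A^1 = \End(U)^1$, and the mirror argument with $P(U)^{-1}$ in the upper-right block gives $A^{-1} = \End(U)^{-1}$, so $A = \End(U)$. The main obstacle is the degree-$0$ step: since both $bc$ and $cb$ are always traceless, the trace part of $\End(U)^0$ must come from a longer product, which is why the explicit length-$4$ identity above is the crux of the argument.
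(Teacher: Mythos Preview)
Your argument is correct, though the paper takes a much shorter route. The paper shows that $U$ is a simple module over the associative superalgebra $R$ generated by $P(U)$: since $P(U)\even\cong\Sl(n+1)$ acts on $U\even$ and $U\odd$ as the nonisomorphic simple modules $V_{\pi_1}$ and $V_{\pi_n}$, any nonzero $R$-submodule must contain one of them, and then $P(U)\odd$ moves vectors between the two summands, forcing the submodule to be all of $U$. The Jacobson Density Theorem (over an algebraically closed field) then gives $R=\End(U)$ immediately.

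Your approach is an explicit matrix computation with no appeal to density or representation theory: you first show $A^0=\End(U)^0$ by producing enough products $bc$, $cb$, and one length-$4$ word with nonzero trace, and then use the resulting bimodule structure to fill the off-diagonal blocks. This is more elementary but longer. One small point: the single length-$4$ product you write lands only in the upper-left block, so to conclude $A^0=\End(V)\oplus\End(V^*)$ you also need the symmetric $cbcb$ product (same matrices in the other order) to recover the trace in the lower-right block; the computation is identical and yields $-(E_{11}+E_{22})$ there as well. With that addition your proof goes through. The paper's argument has the advantage of being coordinate-free and of making transparent why the result holds (irreducibility of $U$), while yours avoids invoking the Density Theorem and would work verbatim over any field of characteristic not $2$.
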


\begin{proof}
	Denote by $R$ the associative superalgebra generated by $P(U)$.
We claim that $U$ is a simple $R$-module.
Indeed, since $P(U)\even\iso \Sl(n+1)$, we have that $U\even \iso V_{\pi_1}$ and $U\odd \iso V_{\pi_n}$ are simple non-isomorphic modules over the Lie algebra $P(U)\even$.
Also, the action of $P(U)\odd$ moves elements from $U\even$ to $U\odd$ and vice-versa, so $U$ does not have nonzero proper subspaces invariant under $P(U)$.

	By Density Theorem, since we are over an algebraically closed field, we conclude that $R = \End (U)$.
\end{proof}

\begin{prop}\label{prop:Aut-Pn}
	The group of automorphisms of $P(n)$ is $\frac{\GL (n+1)}{\{-1,+1\}}$ where $a\in \GL(n+1)$ acts as the conjugation by
	$\left( \begin{matrix}
	a&0\\
	0&(a\transp)^{-1}\\
	\end{matrix}\right)$.
\end{prop}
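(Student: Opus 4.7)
The plan is to prove the proposition in three stages: (i) verify that the prescribed conjugations preserve $P(n)$; (ii) compute the kernel of the resulting homomorphism $\GL(n+1)\to\Aut(P(n))$; and (iii) establish surjectivity.

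For (i), a direct matrix computation shows that conjugating $\left(\begin{matrix} x & b \\ c & -x\transp \end{matrix}\right)\in P(n)$ by $g_a:=\left(\begin{matrix} a & 0 \\ 0 & (a\transp)^{-1} \end{matrix}\right)$ yields $\left(\begin{matrix} axa^{-1} & ab\,a\transp \\ (a\transp)^{-1}ca^{-1} & -(axa^{-1})\transp \end{matrix}\right)$, whose defining conditions (trace-free upper-left block, symmetric upper-right block, antisymmetric lower-left block) are all preserved. Thus $a\mapsto \operatorname{conj}(g_a)|_{P(n)}$ is a homomorphism $\GL(n+1)\to\Aut(P(n))$. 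For (ii), its kernel consists of those $a$ for which $g_a$ centralizes $P(n)$; since $P(n)$ generates $M(n+1,n+1)\iso M_{2(n+1)}(\FF)$ as an associative superalgebra by Lemma~\ref{lemma:Pn-generates-Mmn}, this centralizer is $\FF\cdot I_{2(n+1)}$. Writing $g_a=\lambda I$ forces $a=\lambda I_{n+1}$ and $(a\transp)^{-1}=\lambda I_{n+1}$, whence $\lambda^2=1$.

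For (iii), let $\phi\in\Aut(P(n))$. Since the $\ZZ_2$-grading is intrinsic, $\phi$ preserves the even part $P(n)^0\iso\Sl(n+1)$. The key step is to show that $\phi|_{P(n)^0}$ is inner. For $n\geq 2$, the outer automorphism group of $\Sl(n+1)$ is generated by the diagram automorphism $\sigma:y\mapsto -y\transp$, so if $\phi|_{P(n)^0}$ were outer, after composing with a suitable $\operatorname{conj}(g_{a_0})$ we could reduce to $\phi|_{P(n)^0}=\sigma$. The compatibility $\phi([y,v])=[\sigma(y),\phi(v)]$ then makes $\phi$ an $\Sl(n+1)$-module isomorphism from $P(n)\odd$ (with the original adjoint action) to $P(n)\odd$ (with the action twisted by $\sigma$). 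The former decomposes as $V_{2\pi_1}\oplus V_{\pi_{n-1}}$ (see Subsection~\ref{subseq:Pn}), while the latter, by the standard rule $\sigma^*(\pi_i)=\pi_{n+1-i}$, decomposes as $V_{2\pi_n}\oplus V_{\pi_2}$; these have no common simple summand for $n\geq 2$ (compare highest weights), a contradiction.

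Hence $\phi|_{P(n)^0}=\operatorname{conj}(a_0)|_{\Sl(n+1)}$ for some $a_0\in\GL(n+1)$, and after replacing $\phi$ with $\operatorname{conj}(g_{a_0})^{-1}\circ\phi$ we may assume $\phi$ is the identity on $P(n)^0$. Since $P(n)^{-1}\iso V_{2\pi_1}$ and $P(n)^1\iso V_{\pi_{n-1}}$ are non-isomorphic simple $\Sl(n+1)$-modules, $\phi$ must preserve each of them, and Schur's lemma yields $\phi|_{P(n)^{\pm 1}}=\lambda^\pm\id$ for some $\lambda^\pm\in\FF^\times$. The non-trivial bracket $[P(n)^{-1},P(n)^1]\subseteq P(n)^0$ forces $\lambda^-\lambda^+=1$. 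Choosing $\mu\in\FF^\times$ with $\mu^2=\lambda^-$, the automorphism induced by $g_{\mu I_{n+1}}$ is trivial on $P(n)^0$, multiplies $P(n)^{-1}$ by $\mu^2$, and multiplies $P(n)^1$ by $\mu^{-2}$, so one last composition produces $\phi=\id$. The principal obstacle is the outer-case ruling-out; thereafter the Schur-lemma bookkeeping is routine.
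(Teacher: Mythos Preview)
Your approach is essentially the same as the paper's, and the argument is correct except for one slip in the outer-case step. You claim that $V_{2\pi_1}\oplus V_{\pi_{n-1}}$ and $V_{2\pi_n}\oplus V_{\pi_2}$ have no common simple summand for $n\geq 2$, but this fails for $n=3$: there $\pi_{n-1}=\pi_2$ is fixed by the diagram automorphism, so $V_{\pi_2}$ appears in both decompositions. What you actually need, and what is still true, is that the two $\Sl(n+1)$-modules are not \emph{isomorphic}: an isomorphism would force $V_{2\pi_1}\simeq V_{2\pi_n}$ (after matching the remaining summands), which fails for all $n\geq 2$. With this correction your argument goes through.

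The paper organizes the same ideas slightly differently and thereby avoids this subtlety. It first proves, by a dimension count (noting $\dim P^{-1}=\binom{n+2}{2}\ne\binom{n+1}{2}=\dim P^{1}$), that any automorphism preserves $P^{-1}$ and $P^{1}$ individually. Only then does it rule out the outer case, using $P^{-1}$ alone: if $\vphi\subeven$ were outer one would have $(P^{-1})^{\vphi\subeven}\simeq V_{2\pi_n}$, contradicting $P^{-1}\simeq V_{2\pi_1}$. The remaining steps (reduction to $\vphi\subeven=\id$, Schur's lemma on $P^{\pm 1}$, and absorbing the scalars into $\upsilon_\lambda$) are identical to yours.
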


\begin{proof}
	Let $P=P(n)$ and $\vphi: P \rightarrow P$ be a Lie superalgebra automorphism.
Since it preserves the canonical $\ZZ_2$-grading, taking its restrictions, we obtain a Lie algebra automorphism $\vphi\subeven : P\even \rightarrow P\even$ and an invertible linear map $\vphi\subodd: P\odd \rightarrow P\odd$.

	\setcounter{claim}{0}
	\begin{claim}
		The components $P^{-1}$ and $P^1$ of the canonical $\ZZ$-grading are invariant under $\vphi$.
	\end{claim}

	We denote by $(P\odd)^{\vphi\subeven}$ the $P\even$-module $P\odd$ twisted by $\vphi\subeven$, \ie, the space $P\odd$ with a new action given by $\ell \cdot x = \vphi\subeven (\ell)x$ for all $\ell \in P\even$ and $x \in P\odd$.
Clearly, the map $\vphi\subodd: P\odd \rightarrow (P\odd)^{\vphi\subeven}$ is a $P\even$-module isomorphism.
In particular, $(P\odd)^{\vphi\subeven} = \vphi\subodd (P^{-1}) \oplus \vphi\subodd (P^{1})$, where $\vphi\subodd (P^{-1})$ and $\vphi\subodd (P^{1})$ are simple and non-isomorphic.
It follows that either $(P^{-1})^{\vphi\subeven} = \vphi\subodd (P^{-1})$ or $(P^{-1})^{\vphi\subeven} = \vphi\subodd (P^{1})$.
By dimension count, we have $(P^{-1})^{\vphi\subeven} = \vphi\subodd (P^{-1})$ and, similarly, $(P^{1})^{\vphi\subeven} = \vphi\subodd (P^{1})$.

	\begin{claim}
		The automorphism $\vphi\subeven$ is inner.
	\end{claim}

	If we identify $\Sl(n+1)$ with $P^0$ via the map $\iota$ defined in Subsection \ref{subseq:Pn}, we have $P^{-1}\iso V_{2\pi_1}$ as an $\Sl(n+1)$-module.
	By Claim 1, we know that $\vphi\subodd \restriction_{P\inv} : P\inv \rightarrow (P\inv)^{\vphi\subeven}$ is an isomorphism of modules, but if $\vphi\subeven$ were an outer automorphism, we would have $(V_{2\pi_1})^{\vphi\subeven} \simeq V_{2\pi_n}$, which would force $n=1$, a contradiction.

	\begin{claim}
		If $\varphi\subeven = \id$, then $\varphi = \upsilon_\lambda$	for some $\lambda\in \FF^\times$.
	\end{claim}

	Recall from Subsection \ref{ssec:G-hat-action} that $\upsilon_{\lambda}$ acts as $\lambda^i \id$ on $P^i$.
	Since $\varphi\subeven = \id$, $\vphi_{\bar1}: P\odd \rightarrow P\odd$ is a $P^0$-module automorphism.
	By Claim 1 and Schur's Lemma, $\vphi_{\bar1}\restriction_{P^{-1}}$ and $\vphi_{\bar1}\restriction_{P^{1}}$ are scalar operators.
	Due to the superalgebra structure, these two scalars must be inverses of each other, concluding the proof of the claim.\qedclaim 

	By Claim 2, we know that there is an invertible $a\in \End(U\even)$ such that $\vphi\subeven$ is the conjugation by $A=\left( \begin{matrix}
	a&0\\
	0&(a\transp)^{-1}\\
\end{matrix}\right)$.
By Claim 3, $\vphi$ must be this conjugation composed with $\upsilon_\lambda$	for some $\lambda\in \FF^\times$.
But $\upsilon_\lambda$ is the conjugation by $\left( \begin{matrix}
	\mu\inv\id &0\\
	0&\mu \id\\
	\end{matrix}
\right)$ where $\mu^2=\lambda$, so we can adjust $a$ and assume that $\vphi$ is the conjugation by $A$.
Since $P$ generates $M(n+1,n+1)$ as an associative superalgebra (Lemma \ref{lemma:Pn-generates-Mmn}), $A$ is determined up to scalar and, clearly, the only possible scalars are $-1$ and $1$.
\end{proof}

\begin{remark}
	The images of $\upsilon_\lambda$, $\lambda \in \FF^\times$, cover the group of outer automorphisms of $P(n)$ (see \cite[Theorem 1]{serganova}).
\end{remark}

\subsection{Restriction of gradings from $M(n+1,n+1)$ to $P(n)$}

We start with a consequence of Proposition \ref{prop:Aut-Pn}.

\begin{cor}\label{cor:automorphisms-Pn}
	Every automorphism of $P(n)$ is the restriction of a unique even automorphism of $M(n+1, n+1)$ and every grading on $P(n)$ is the restriction of a unique even grading on $M(n+1, n+1)$.
\end{cor}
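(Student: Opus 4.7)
The plan is to prove the automorphism assertion first and then bootstrap to the grading assertion using the correspondence between $G$-gradings and $\widehat G$-actions from Subsection~\ref{ssec:G-hat-action}.

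For the automorphism part: given $\varphi\in\Aut(P(n))$, Proposition~\ref{prop:Aut-Pn} presents $\varphi$ as conjugation by an invertible even operator $A$ on $U=V\oplus V^*$ of the explicit block-diagonal form displayed there. Conjugation by $A$ is thus automatically an even automorphism of $M(n+1,n+1)$ that restricts to $\varphi$, which establishes existence. For uniqueness, if $\Phi_1,\Phi_2$ are two even extensions of $\varphi$, then $\Phi_1\inv\Phi_2$ is an automorphism of the associative superalgebra $M(n+1,n+1)$ that fixes $P(n)$ pointwise; Lemma~\ref{lemma:Pn-generates-Mmn} then forces $\Phi_1\inv\Phi_2=\id$, since $P(n)$ generates $M(n+1,n+1)$ as an associative superalgebra.

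For the grading part: translate a $G$-grading $\Gamma$ on $P(n)$ into a $\widehat G$-action on $P(n)$ by automorphisms. By the first part, each $\chi\in\widehat G$ lifts uniquely to an even automorphism $\tilde\chi\in\mc E$ of $M(n+1,n+1)$. The assignment $\chi\mapsto\tilde\chi$ is multiplicative because $\widetilde{\chi_1\chi_2}$ and $\tilde\chi_1\tilde\chi_2$ are both even extensions of $\chi_1\chi_2\in\Aut(P(n))$, hence must coincide by the uniqueness just proved. The explicit matrix description of the lift in Proposition~\ref{prop:Aut-Pn} makes this homomorphism rational, so it corresponds to a $G$-grading $\tilde\Gamma$ on $M(n+1,n+1)$, and $\tilde\Gamma$ is even by Proposition~\ref{prop:3-equiv-even-morita-action}. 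Because each $\tilde\chi$ restricts to $\chi$, the subspace $P(n)$ is $\widehat G$-stable, hence $\tilde\Gamma$-graded, with $\tilde\Gamma|_{P(n)}=\Gamma$. Uniqueness of $\tilde\Gamma$ then follows from applying the automorphism uniqueness character by character.

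The main conceptual step is the passage from pointwise uniqueness of the automorphism lift to multiplicativity of $\chi\mapsto\tilde\chi$, but this is handed to us for free by the uniqueness statement of the first part. Rationality of the resulting $\widehat G$-action is immediate from the explicit matrix form in Proposition~\ref{prop:Aut-Pn}, so I do not anticipate any substantial further obstacle beyond verifying these compatibilities.
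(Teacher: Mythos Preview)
Your proposal is correct and follows essentially the same approach as the paper: use Proposition~\ref{prop:Aut-Pn} to obtain the embedding $\Aut(P(n))\hookrightarrow\mc E\subseteq\Aut(M(n+1,n+1))$, invoke Lemma~\ref{lemma:Pn-generates-Mmn} for uniqueness, and pass from automorphisms to gradings via the $\widehat G$-action correspondence together with Proposition~\ref{prop:3-equiv-even-morita-action}(iv). The paper's proof is terser---it does not spell out the multiplicativity and rationality checks for the lifted $\widehat G$-action that you carefully address---and it derives uniqueness of the grading extension directly from the fact that $P(n)$ generates $M(n+1,n+1)$ (homogeneous products determine the ambient grading), rather than character by character as you do; but these are differences of exposition, not of strategy.
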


\begin{proof}
	Consider the embedding $\Aut(P(n))\rightarrow \Aut(M(n+1,n+1))$ that follows from Proposition \ref{prop:Aut-Pn}.
	The image consists of even automorphisms, so Proposition \ref{prop:3-equiv-even-morita-action}(iv) implies that every $G$-grading on $P(n)$ extends to an even grading on $M(n+1, n+1)$. The uniqueness follows from Lemma \ref{lemma:Pn-generates-Mmn}.
\end{proof}

Of course, not every even grading on $M(n+1,n+1)$ restricts to $P(n)$. 
We are going to obtain necessary and sufficient conditions for such restriction to be possible.

Let $\D$ be a finite-dimensional graded division algebra.
The concept of \emph{dual of a graded $\D$-module} is a special case of the concept of \emph{superdual} discussed in Subsection \ref{ssec:superdual}, 
which arises when the gradings on $\D$ and its graded modules are even.
Furthermore, in our situation $T$ must be an elementary $2$-group (see Theorem \ref{thm:Pn-elem-2-grp}).
Let us recall the definitions and specialize them to the case at hand.

Let $\mc V$ be a \emph{right} graded $\D$-module. Then $\mc V^{\star}=\Hom_{\D} (\mc V, \D)$ is a \emph{left} $\D$-module with the action given by $(d\cdot f) (v) = d f(v)$ for all $d\in \D$, $f\in\mc V^\star$ and  $v\in\mc V$.
If $\mc B = \{v_1, \ldots, v_k\}$ is a homogeneous basis for $\mc V$, the dual basis $\mc B\Star \subseteq \mc V \Star$ consists of the elements $v_i\Star$, $1\leq i \leq k$, defined by $v_i\Star (v_j) = \delta_{ij}$.
Note that $\operatorname {deg} v_i\Star = (\operatorname {deg} v_i)\inv$.
Given two right $\D$-modules, $\mc V$ and $\mc W$, and a $\D$-linear map  $L:\mc V\rightarrow \mc W$, we have the adjoint $L^\star: \mc W^\star\rightarrow \mc V^\star$ defined by $L^\star(f) = f\circ L$, for every $f\in\mc W^\star$.

We now assume that $\D$ is a standard realization associated to a pair $(T, \beta)$ such that $T$ is an elementary $2$-group.
With this we can identify $\D$ with $\D\op$ via transposition (see Remark \ref{rmk:2-grp-transp}) and, hence, we can regard left $\D$-modules as right $\D$-modules.
In particular, if $\mc V$ is a graded right $\D$-module, then $\mc V\Star$ is a graded right $\D$-module via $(f \cdot d)(v) = d\transp f(v)$ for all $f\in \mc V\Star$, $d\in\D$ and $v\in \mc V$.

Consider the space $\Hom_\D(\mc V, \mc W)$.
Fixing homogeneous $\D$-bases $\mc B = \{v_1, \ldots, v_k\}$ and $\mc C = \{w_1, \ldots, w_\ell\}$ for $\mc V$ and $\mc W$, respectively, we obtain an isomorphism between $\Hom_\D(\mc V, \mc W)$ and $\M_{\ell \times k} (\D)$.
The latter is naturally isomorphic to $\M_{\ell \times k} (\FF) \tensor \D$, so we will identify them.

\begin{lemma}\label{lemma:D-transp}
	Let $L: \mc V\rightarrow \mc W$ be a $\D$-linear map.
We fix homogeneous $\D$-bases $\mc B$ and $\mc C$ on $\mc V$ and $\mc W$, respectively, and their dual bases in $\mc V\Star$ and $\mc W\Star$.
If $A\tensor d \in \M_{\ell \times k} (\FF) \tensor \D$ represents $L$, then $A\transp \tensor d\transp$ represents $L\Star$.\qed
\end{lemma}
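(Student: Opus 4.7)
The plan is to unpack the definitions directly and read off the matrix of $L^\star$ in the dual bases. Write $\mc B = \{v_1,\ldots,v_k\}$ and $\mc C = \{w_1,\ldots,w_\ell\}$, with dual bases $\{v_j^\star\}$ and $\{w_i^\star\}$. The assertion that $A\otimes d$, with $A=(a_{ij})$, represents $L$ just means
\[
L(v_j) = \sum_{i=1}^{\ell} w_i\cdot(a_{ij} d).
\]

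First I would compute $L^\star(w_i^\star)(v_j)$. Using $L^\star(f)=f\circ L$ together with the right $\D$-linearity of $w_i^\star \in \Hom_\D(\mc W,\D)$ and the fact that the scalars $a_{ij}\in\FF$ lie in the centre of $\D$ and hence commute with the module action, this gives
\[
L^\star(w_i^\star)(v_j) = w_i^\star\!\left(\sum_{i'} w_{i'}\cdot(a_{i'j} d)\right)
= \sum_{i'} a_{i'j}\,w_i^\star(w_{i'})\,d = a_{ij}\,d.
\]

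Next I would expand $L^\star(w_i^\star)$ in the basis $\{v_j^\star\}$ of $\mc V^\star$, viewed as a right $\D$-module via the transposed action $(f\cdot d')(v)=(d')^\top f(v)$ (this is the point where the elementary $2$-group identification $\D\simeq \D\op$ via transposition from Remark \ref{rmk:2-grp-transp} enters). Writing $L^\star(w_i^\star)=\sum_j v_j^\star\cdot b_{ji}$, evaluation at $v_{j}$ yields
\[
\Big(\sum_{j'} v_{j'}^\star\cdot b_{j'i}\Big)(v_j) = \sum_{j'} b_{j'i}^\top\,v_{j'}^\star(v_j) = b_{ji}^\top.
\]
Comparing with the previous formula gives $b_{ji}^\top = a_{ij}d$, so $b_{ji} = a_{ij}\,d^\top$, which is exactly the $(j,i)$-entry of $A^\top\otimes d^\top$.

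There is no hard part here; the only thing to watch is the asymmetric convention by which the dual becomes a \emph{right} $\D$-module through the transpose, so I would be explicit about that step to make sure the transpose appears on $d$ and not just on $A$. Once both computations are on the table, the result is immediate.
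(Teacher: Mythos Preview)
Your argument is correct and is exactly the straightforward unwinding of definitions one would expect; the only subtlety is the transposed right action on the dual, and you handle it properly. The paper itself omits the proof entirely (the lemma is stated with a trailing \qed), so there is nothing further to compare.
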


We can regard the elements of $\M_{\ell \times k} (\FF) \tensor \D$ as matrices over $\FF$ via Kronecker product (as in Definition \ref{def:explicit-grd-assoc}).
Then we have $A\transp \tensor d\transp = (A\tensor d)\transp$.

\begin{thm}\label{thm:Pn-elem-2-grp}
Let $U$ be a finite-dimensional superspace and let $\Gamma = \Gamma (T, \beta, \gamma_0, \gamma_1)$ be an even $G$-grading on $\End(U)$.
The superspace $U$ admits a supersymmetric nondegenerate odd bilinear form such that $P(U)$ is a $G$-graded subsuperalgebra of $\End(U)^{(-)}$ if, and only if, $T$ is an elementary $2$-group and there is $g_0\in G$ such that $\Xi(\gamma_1) = g_0 \, \Xi(\gamma_0\inv)$.
Moreover, if there are two supersymmetric nondegenerate odd bilinear forms on $U$ such that the corresponding $P_1(U)$ and $P_2(U)$ are $G$-graded subsuperalgebras, then $P_1(U)$ and $P_2(U)$ are ismorphic up to shift in opposite directions.
\end{thm}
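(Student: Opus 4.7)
The plan is to reformulate ``$P(U)$ is a $G$-graded subsuperalgebra of $\End(U)$'' in terms of a super-anti-involution and then exploit the structure theory of even gradings developed in Section~\ref{sec:Mmn}.  A supersymmetric nondegenerate odd bilinear form $\langle\cdot,\cdot\rangle$ on $U$ defines a super-anti-involution $\sigma$ of $\End(U)$ via $\langle Lu,v\rangle=-(-1)^{|L||u|}\langle u,\sigma(L)v\rangle$, whose $-1$-eigenspace is $P(U)$; hence $P(U)$ is $G$-graded if and only if $\sigma$ preserves $G$-degree.  Since $\sigma$ is determined by $P(U)$ (as the reflection with kernel $P(U)$) and in turn determines the form up to a scalar, a $\widehat G$-equivariance argument shows that if $P(U)$ is $G$-graded, the form itself is $G$-homogeneous of some degree $g_0\in G$: twisting by $\chi\in\widehat G$ yields another form with the same $\sigma$, hence proportional to the original, and decomposing into $G$-homogeneous parts forces concentration in a single degree.

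For the necessary direction, the even grading gives $U=\mc V\oplus\mc W$ with $\mc V=U\even$ and $\mc W=U\odd$ graded right $\D$-modules having degree-tuples $\gamma_0$ and $\gamma_1$.  The form being odd makes $\sigma$ swap the $\End_\D(\mc V)$- and $\End_\D(\mc W)$-blocks of $\End_\D(\mc V\oplus\mc W)\cong\End(U)$, and the degree-preservation turns this swap into a graded anti-isomorphism $\End_\D(\mc V)\to\End_\D(\mc W)$.  Using the standard graded identification $\End_\D(\mc V)\sop\cong\End_{\D\sop}(\mc V^\star)$ (with $\mc V^\star$ having degree-tuple $\gamma_0^{-1}$, cf.\ Remark~\ref{rmk:gamma-inv}), this becomes a graded isomorphism $\End_{\D\sop}(\mc V^\star)\cong\End_\D(\mc W)$.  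By Lemma~\ref{lemma:Dsop} the support--bicharacter pair of $\D\sop$ is $(T,\beta^{-1})$, so Theorem~\ref{thm:classification-matrix} forces $\beta=\beta^{-1}$ (hence $T$ is an elementary $2$-group) and $\Xi(\gamma_1)=g_0\,\Xi(\gamma_0^{-1})$ for some $g_0\in G$.

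For the converse, assume $T$ is an elementary $2$-group and $\Xi(\gamma_1)=g_0\,\Xi(\gamma_0^{-1})$.  By Remark~\ref{rmk:2-grp-transp} transposition in the standard realization identifies $\D$ with $\D\sop$ as graded algebras, so $\mc V^\star=\Hom_\D(\mc V,\D)$ becomes a graded right $\D$-module with degree-tuple $\gamma_0^{-1}$, and the hypothesis yields an isomorphism $\phi\colon\mc W\to(\mc V^\star)^{[g_0]}$ of graded right $\D$-modules.  Letting $f\colon\D\to\FF$ be the projection onto $\D_e=\FF$, define
\[
\langle v,w\rangle:=f\bigl(\phi(w)(v)\bigr)\quad\text{for }v\in\mc V,\ w\in\mc W,
\]
extended by (super)symmetry and by zero on $\mc V\times\mc V$ and $\mc W\times\mc W$.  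Routine checks (using that $\phi$ is an isomorphism, and that for every nonzero homogeneous $\psi\in\mc V^\star$ some homogeneous $v$ gives $\psi(v)\in\D^{\times}$) show the form is odd, supersymmetric, nondegenerate, and homogeneous of degree $g_0$, so $P(U)$ is $G$-graded.

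For the uniqueness statement, two admissible forms give two isomorphisms $\phi_1,\phi_2$ with shift parameters $g_0^{(1)},g_0^{(2)}$; by Lemma~\ref{lemma:simplebimodule} applied to $\mc W$ as a simple graded $\D$-module, the two are related by a single shift on $\mc W$ by $g_0^{(1)}(g_0^{(2)})^{-1}$, which under the Morita-block identifications $P(U)^{-1}\subseteq\Hom_\D(\mc W,\mc V)$ and $P(U)^{1}\subseteq\Hom_\D(\mc V,\mc W)$ propagates (by Lemma~\ref{lemma:opposite-directions}) as opposite shifts on $P(U)^{-1}$ and $P(U)^{1}$.  The main technical obstacle will be the careful bookkeeping of degree shifts through the superadjoint and the identification $\D\cong\D\sop$, so that the factor $g_0$ appears exactly as stated and the shifts in the uniqueness statement come out in the two opposite directions rather than as a single unified one.
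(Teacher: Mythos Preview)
Your necessary and sufficient directions follow essentially the same line as the paper: both extract a degree-preserving anti-isomorphism between $\End(U\even)$ and $\End(U\odd)$ and then invoke Lemma~\ref{lemma:Dsop}, Remark~\ref{rmk:gamma-inv} and Theorem~\ref{thm:classification-matrix} to get $\beta=\beta^{-1}$ and the multiset condition. The paper is slightly more direct: rather than introducing $\sigma$ and arguing via a $\widehat G$-action that the form is $G$-homogeneous, it observes that the adjoint map $a\mapsto a^*$ on $\End(V)$, restricted to $\Sl(V)$, equals $-v\circ u^{-1}$ for the two projections $u,v$ of $P(U)\even$ onto the diagonal blocks --- both visibly of degree $e$ --- so the anti-isomorphism is degree-preserving without further work. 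Two small points in your version: the eigenspace of your $\sigma$ is $\mathfrak{p}(U)$, not $P(U)$, and to get from ``$P(U)$ is graded'' to ``$\sigma$ preserves degree'' you need Lemma~\ref{lemma:Pn-generates-Mmn} (an anti-automorphism of $\End(U)$ is determined by its effect on the generating set $P(U)$). For the converse, the paper declares $\mc U\odd=(\mc V^\star)^{[g_0]}$ and checks via Lemma~\ref{lemma:D-transp} that $\mathfrak{p}(n)$ is a graded subspace of the resulting matrix model, rather than writing the form down explicitly; both constructions work.

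The genuine gap is in your uniqueness argument. You invoke Lemma~\ref{lemma:simplebimodule} for ``$\mc W$ as a simple graded $\D$-module'', but $\mc W$ has $\D$-rank $k_1$ and is not simple over $\D$. More fundamentally, the gradings on $\mc V$ and $\mc W$ are \emph{fixed} by the given $\Gamma$; what varies between the two forms is the embedded subalgebra $P_i(U)\subseteq\End(U)$, so the comparison must be made there, not on $\mc W$. The paper's argument does this: both $P_1(U)\even$ and $P_2(U)\even$ are identified (via the map $u$) with $\Sl(V)$ carrying the single grading $\Gamma(T,\beta,\gamma_0)$, so the even parts are isomorphic as graded algebras; then Lemma~\ref{lemma:simplebimodule} is applied to $P_i(U)^{-1}\cong V_{2\pi_1}$ and $P_i(U)^{1}\cong V_{\pi_{n-1}}$, which \emph{are} simple $P(U)\even$-modules, and Lemma~\ref{lemma:opposite-directions} forces the resulting shifts to be mutually inverse.
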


\begin{proof}
	Assume that, for some form, $P(U)$ is a $G$-graded subsuperalgebra.
	Let $V=U\even$ and consider the identification of $U\odd$ with $V^*$ presented in Subsection \ref{subseq:Pn}.
This way $\Gamma = \Gamma (T, \beta, \gamma_0, \gamma_1)$ is an even grading on
	\[
		\End(U) = \End(V \oplus V^*) =
		\begin{pmatrix}
			\End (V) & \Hom (V^*, V)\\
			\Hom (V, V^*) & \End(V^*)
		\end{pmatrix}.
	\]
	In particular, $\End(V)$ and $\End(V^*)$ are graded subspaces of $\End(U)\even$, with gradings $\Gamma (T, \beta, \gamma_0)$ and 
	$\Gamma (T, \beta, \gamma_1)$, respectively. If
	\[
		x = \left(\begin{matrix}
		a  &  0        \\
		0  &  -a^*\\
		\end{matrix}
		\right)
	\]
	is a homogeneous element in $P(U)\even$, then both $u(x) := a \in \Sl(V) \subseteq \End(V)$ and $v(x) := -a^* \in \Sl(V^*) \subseteq \End(V^*)$ are homogeneous elements of the same degree.
In other words, the maps $u: P(n)\even \rightarrow \Sl(V)$ and $v: P(n)\even \rightarrow \Sl(V^*)$ are homogeneous of degree $e$.
Consider the algebra isomorphism $\vphi: \End(V)\op \rightarrow \End(V^*)$ associating to each operator its adjoint.
Clearly, $\vphi(a) = - (v \circ u\inv) (a)$ for all $a\in \Sl(V)$.
Since $\End(V) = \Sl(V)\, \oplus\, \FF\id_V$ and $\vphi(\id_V) = \id_{V^*}$, we see that $\vphi$ is homogeneous of degree $e$.
From Lemma \ref{lemma:Dsop} and Remark \ref{rmk:gamma-inv}, we conclude that $\Gamma (T, \beta\inv, \gamma_0\inv) \iso \Gamma (T, \beta, \gamma_0)$, and hence, by Theorem \ref{thm:classification-matrix}, $\beta\inv = \beta$ and there is $g_0\in G$ such that $g_0\,\Xi(\gamma_0\inv) = \Xi(\gamma_1)$.
Since $\beta$ is nondegenerate, $\beta\inv = \beta$ if, and only if, $T$ is an elementary $2$-group.

	Note that the $G$-graded algebra $P(U)\even$ is isomorphic (via the map $u$) to the $G$-graded subalgebra $\Sl(V)$ of $\End(V)^{(-)}$, where the grading on $\End(V)$ is $\Gamma(T,\beta, \gamma_0)$.
Therefore, if we have two forms such that the corresponding $P_1(U)$ and $P_2(U)$ are $G$-graded subsuperalgebras, then their even parts are isomorphic as $G$-graded algebras.
Using Lemmas \ref{lemma:simplebimodule} and \ref{lemma:opposite-directions}, we conclude the ``moreover'' part.

	Now assume, conversely, that $T$ is an elementary $2$-group and $\Xi(\gamma_1) = g_0 \, \Xi(\gamma_0\inv)$.
	We can adjust $\gamma_1$, if necessary, so that $\gamma_1 = g_0\, \gamma_0\inv$ and the isomorphism class of $\Gamma$ does not change.

	Let $\D$ be a standard realization of a graded division algebra associated to $(T,\beta)$ and let $\mc V$ be a graded right $\D$-module 
	with a homogeneous basis $\mc B$ whose degrees are given by $\gamma_0$.
	Define $\mc U = \mc U\even \oplus \mc U\odd$ with $\mc U\even = \mc V$ and $\mc U\odd = (\mc V\Star)^{[g_0]}$.
	The $G$-grading $\Gamma$ on $\End(U)$ is defined by means of an isomorphism:
	\[
	\begin{split}
		\End(U) \iso \End_\D (\mc U) &= \begin{pmatrix}
			                  \End_\D (\mc V) & \Hom_\D ((\mc V\Star)^{[g_0]}, \mc V)\\
												\Hom_\D (\mc V, (\mc V\Star)^{[g_0]}) & \End_\D ((\mc V\Star)^{[g_0]})
											\end{pmatrix}\\
											&= \begin{pmatrix}
												                  \End_\D (\mc V) & \Hom_\D (\mc V\Star, \mc V)^{[g_0\inv]}\\
																					\Hom_\D (\mc V, \mc V\Star)^{[g_0]} & \End_\D (\mc V\Star)
																				\end{pmatrix}.
	\end{split}
	\]
	 Using the homogeneous $\D$-bases $\mc B$ for $\mc V$ and $\mc B\Star$ for $\mc V\Star$ to represent $\D$-linear maps by matrices in $M_k(\D) = M_k(\FF) \tensor \D$ and using the Kronecker product to identify the latter with $M_{n+1}(\FF)$, we obtain an isomorphism $\End(U) \xrightarrow{\sim} M(n+1, n+1)$, and $M(n+1, n+1)$ contains $\mathfrak{p}(n)$ and $P(n) = [\mathfrak{p}(n), \mathfrak{p}(n)]$ as in Equation \eqref{eq:Pn-abstract}.

	The above isomorphism $\End(U)\xrightarrow{\sim} M(n+1,n+1)$ of superagebras is given by an isomorphism of superspaces $U \xrightarrow{\sim} \FF^{n+1} \oplus \FF^{n+1}$.
Hence, there exists a supersymmetric nondegenerate odd bilinear form on $U$ such that $P(U)$ corresponds to $P(n)$ under the above isomorphism.

	Finally, we have to show that $P(U)$ is a $G$-graded subsuperspace of $\End(U)$.
Clearly, it is sufficient to prove the same for $\mathfrak{p}(U)$.
But $\mathfrak{p}(U)$ corresponds to
	\begin{equation*}
		\mathfrak{p}(n) = \left\{\left(\begin{matrix}
		a  &  b   \\
		c  &  -a\transp\\
		\end{matrix}
		\right)\Big| \,a,b,c\in M_{n+1}(\FF),\, b=b\transp \AND c=-c\transp\right\} \subseteq M(n+1,n+1),
	\end{equation*}
	which, in view of Lemma \ref{lemma:D-transp}, corresponds to the subsuperspace
	\[\begin{split}
		\bigg \{\left(\begin{matrix}
		a  &  b   \\
		c  &  -a\Star\\
		\end{matrix}
		\right)\Big| \,a\in \End_\D(\mc U),\, b=b\Star\in \Hom_\D(\mc V\Star, \mc V),\, c=-c\Star\in \Hom_\D(\mc V, \mc V\Star) \bigg\}
	\end{split}
	\]
	of $\End_\D(\mc U)$, which is clearly a $G$-graded subsuperspace.
\end{proof}

\subsection{$G$-gradings up to isomorphism}

\begin{defi}\label{def:grd-Pn}
	Let $T\subseteq G$ be a finite elementary $2$-subgroup, $\beta$ be a nondegenerate alternating bicharacter on $T$, $\gamma$ be a $k$-tuple of elements of $G$, and $g_0\in G$. We will denote by $\Gamma_P (T, \beta, \gamma, g_0)$ the grading on the superalgebra $P(n)$ obtained by restricting the grading $\Gamma(T,\beta,\gamma,g_0\gamma\inv)$ on $M(n+1,n+1)$ as in the proof of Theorem \ref{thm:Pn-elem-2-grp}.	
	Explicitly, write $\gamma = (g_1, \ldots, g_k)$ and take a standard realization of a graded division algebra $\D$ associated to $(T, \beta)$.
Then $M_{n+1}(\FF)\iso M_k(\FF) \tensor \D$ by means of Kronecker product, and
	\[
		M(n+1, n+1) \iso
		\begin{pmatrix}
			M_k (\FF)\tensor \D & M_k (\FF)\tensor \D\\
			M_k (\FF)\tensor \D & M_k (\FF)\tensor \D
		\end{pmatrix}
	\]
	Denote by $E_{ij}$ the $(i,j)$-th matrix unit in $M_k (\FF)$. The grading $\Gamma(T,\beta,\gamma,g_0\gamma\inv)$ is given by:
	\begin{center}
	\begin{tabular}{@{$\bullet$ }ll}
	$\deg (E_{ij}\tensor d) = g_i (\deg d) g_j\inv$      & in the upper left corner;\\
	 $\deg (E_{ij}\tensor d) = g_i (\deg d) g_j \, g_0\inv$ & in the upper right corner;\\
	 $\deg (E_{ij}\tensor d) = g_i\inv (\deg d) g_j\inv g_0$   & in the lower left corner;\\
	 $\deg (E_{ij}\tensor d) = g_i\inv (\deg d) g_j$ & in the lower right corner.
	\end{tabular}
	\end{center}
\end{defi}

Note that the restriction of $\Gamma_P(T, \beta, \gamma, g_0)$ to the even part is the inner grading on $\Sl(n+1)$ with parameters $(T, \beta, \gamma)$.

\begin{thm}\label{thm:Pn-iso}
	Every $G$-grading on the Lie superalgebra $P(n)$ is isomorphic to some $\Gamma_P (T, \beta, \gamma, g_0)$ as in Definition \ref{def:grd-Pn}.
	Two gradings, $\Gamma = \Gamma_P (T,\beta,\gamma,g_0)$ and $\Gamma' = \Gamma_P (T',\beta',\gamma',g_0')$, are isomorphic if and only if $T=T'$, $\beta = \beta'$, and there is $g\in G$ such that $g^2 g_0 = g_0'$ and $g\,\Xi(\gamma)=\Xi(\gamma')$.
\end{thm}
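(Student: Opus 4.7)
The plan is to reduce the classification to that of even gradings on $M(n+1,n+1)$, using Corollary \ref{cor:automorphisms-Pn} (gradings and automorphisms of $P(n)$ extend uniquely to $M(n+1,n+1)$) together with Theorems \ref{thm:Pn-elem-2-grp} and \ref{thm:even-assc-iso}.

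For existence, a $G$-grading $\Gamma$ on $P(n)$ extends uniquely, by Corollary \ref{cor:automorphisms-Pn}, to an even grading $\tilde\Gamma=\Gamma(T,\beta,\gamma_0,\gamma_1)$ on $M(n+1,n+1)$; Theorem \ref{thm:Pn-elem-2-grp} then forces $T$ to be an elementary $2$-group and $\Xi(\gamma_1)=g_0\Xi(\gamma_0^{-1})$ for some $g_0\in G$. After reordering the basis of $\mathcal{V}^*$ (which changes neither $\tilde\Gamma$ nor $\Gamma$ up to isomorphism), we may take $\gamma_1=g_0\gamma_0^{-1}$ as tuples, so $\Gamma\cong\Gamma_P(T,\beta,\gamma_0,g_0)$.

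For the isomorphism criterion, suppose $\varphi\colon P(n)\to P(n)$ is a $G$-graded isomorphism sending $\Gamma_P(T,\beta,\gamma,g_0)$ to $\Gamma_P(T',\beta',\gamma',g_0')$. By Corollary \ref{cor:automorphisms-Pn} it extends uniquely to an even automorphism $\tilde\varphi$ of $M(n+1,n+1)$ between the extended gradings, which by Proposition \ref{prop:Aut-Pn} is conjugation by some $A=\mathrm{diag}(a,(a^\top)^{-1})$. Independently, Proposition \ref{prop:inner-automorphism} applied to the identifications $M(n+1,n+1)\cong\End_{\mathcal{D}}(\mathcal{U})$ and $M(n+1,n+1)\cong\End_{\mathcal{D}}(\mathcal{U}')$ with $\mathcal{U}=\mathcal{V}\oplus(\mathcal{V}^*)^{[g_0]}$ and $\mathcal{U}'=\mathcal{V}'\oplus((\mathcal{V}')^*)^{[g_0']}$ realises $\tilde\varphi$ as conjugation by a homogeneous $\mathcal{D}$-linear isomorphism $\psi\colon\mathcal{U}\to\mathcal{U}'$ of some degree $g\in G$. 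Since $A$ and $\psi$ agree up to scalar, $A$ is itself homogeneous of degree $g$, so $a\colon\mathcal{V}\to\mathcal{V}'$ has degree $g$ (yielding $g\,\Xi(\gamma)=\Xi(\gamma')$) and $(a^\top)^{-1}\colon(\mathcal{V}^*)^{[g_0]}\to((\mathcal{V}')^*)^{[g_0']}$ has degree $g$ as well. Using $\deg v_i^*=(\deg v_i)^{-1}$ (cf.\ Remark \ref{rmk:gamma-inv}), a direct calculation shows that $(a^\top)^{-1}$ has degree $g^{-1}$ between the unshifted duals, hence degree $g^{-1}g_0'g_0^{-1}$ between the shifted ones; equating this to $g$ forces $g_0'=g^2g_0$. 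The equalities $T=T'$ and $\beta=\beta'$ follow from Theorem \ref{thm:classification-matrix} applied to the extended gradings. Conversely, given such a $g$, one constructs a degree-$g$ $\mathcal{D}$-linear isomorphism $a\colon\mathcal{V}\to\mathcal{V}'$; the condition $g_0'=g^2g_0$ guarantees that $A=\mathrm{diag}(a,(a^\top)^{-1})$ is homogeneous of degree $g$, and the corresponding conjugation is a graded isomorphism of $M(n+1,n+1)$ which, being of this block form, restricts to the desired isomorphism of $P(n)$.

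The main obstacle is the degree bookkeeping for $(a^\top)^{-1}$ in the presence of the shifts $[g_0]$ and $[g_0']$: it is this interaction that turns the first-block condition $g\,\Xi(\gamma)=\Xi(\gamma')$ into the quadratic relation $g_0'=g^2g_0$. A subsidiary point is that the ``swap'' alternative of Theorem \ref{thm:even-assc-iso}(ii) cannot occur here, which is automatic because $\tilde\varphi$ is \emph{even} by Proposition \ref{prop:Aut-Pn} (it preserves the canonical $\ZZ$-grading of $M(n+1,n+1)$).
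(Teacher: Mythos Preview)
Your proof is correct and follows essentially the same route as the paper: extend to $M(n+1,n+1)$ via Corollary~\ref{cor:automorphisms-Pn}, use Proposition~\ref{prop:Aut-Pn} to write the automorphism as conjugation by $\mathrm{diag}(a,(a^\top)^{-1})$, invoke Proposition~\ref{prop:inner-automorphism} to see this conjugating element is homogeneous of some degree $g$, and read off $g\,\Xi(\gamma)=\Xi(\gamma')$ and $g_0'=g^2g_0$ from the two blocks. The only expository point to fix is ordering: you apply Proposition~\ref{prop:inner-automorphism} with a single $\mathcal D$ before establishing $T=T'$ and $\beta=\beta'$, so that deduction (which you place last) should come first, as the paper does; also, the passage from the matrix block $(a^\top)^{-1}$ to the $\mathcal D$-adjoint $(\alpha^\star)^{-1}$ is exactly Lemma~\ref{lemma:D-transp}, which you use implicitly but might cite.
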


\begin{proof}
	The first assertion follows from Corollary \ref{cor:automorphisms-Pn} and Theorem \ref{thm:Pn-elem-2-grp}.
	For the second assertion, recall that $\Gamma$ and $\Gamma'$ are, respectively, the restrictions of the gradings  
	$\widetilde \Gamma = \Gamma (T, \beta, \gamma, g_0 \gamma\inv)$ and $\widetilde \Gamma' = \Gamma (T', \beta', \gamma', g_0' (\gamma')\inv)$
	on $M(n+1, n+1)$. 

	$(\Rightarrow)$:
	Suppose $\Gamma \iso \Gamma'$.
	Since every automorphism of $P(n)$ extends to an automorphism of $M(n+1, n+1)$ (Corollary \ref{cor:automorphisms-Pn}), we have 
	$\widetilde \Gamma \iso \widetilde \Gamma'$, which implies $T=T'$ and $\beta = \beta'$ by Theorem \ref{thm:even-assc-iso}.

	Let $\D$ be a standard realization associated to $(T, \beta)$ and let $\mc V$ be a right $\D$-module with basis $\mc B = \{v_1, \ldots, v_k\}$, 
	which is graded by assigning $\deg v_i = g_i$.
	The same module, but with $\deg v_i = g_i'$, will be denoted by $\mc W$.
	Then $E = \End_\D (\mc V \oplus (\mc V\Star)^{[g_0]})$ and $E' = \End_\D (\mc W \oplus (\mc W\Star)^{[g_0']})$ are graded superalgebras.
	Using the bases $\mc B$ and $\mc B\Star$ and the Kronecker product, we can identify them with $M(n+1, n+1)$.
	The first identification gives the grading $\widetilde \Gamma$ on $M(n+1, n+1)$ and the second gives $\widetilde \Gamma'$.

	Let $\Phi$ be an automorphism of $M(n+1, n+1)$ that sends $\widetilde\Gamma$ to $\widetilde\Gamma'$.
	By Proposition \ref{prop:Aut-Pn}, $\Phi$ is the conjugation by
	\[
	A =
	\begin{pmatrix}
		a & 0\\
		0 & (a\transp)\inv
	\end{pmatrix}
	\]
	for some $a\in \GL(n+1)$.
	By Lemma \ref{lemma:D-transp}, $\Phi$ corresponds to the isomorphism $E\rightarrow E'$ that is the conjugation by
	\[
	\phi =
	\begin{pmatrix}
		\alpha & 0\\
		0 & (\alpha\Star)\inv
	\end{pmatrix}
	\]
	where $\alpha: \mc V\rightarrow \mc W $ and $(\alpha\Star)\inv: (\mc V\Star)^{[g_0]} \rightarrow (\mc W\Star)^{[g_0']} $ are $\D$-linear maps.
	On the other hand, by Proposition \ref{prop:inner-automorphism}, this isomorphism $E\to E'$ is the conjugation by a homogeneous bijective $\D$-linear map
	\[
	\psi=
	\left( \begin{matrix}
		\psi_{11}&\psi_{12}\\
		\psi_{21}&\psi_{22}\\
	\end{matrix}\right).
	\]
	It follows that there is  $\lambda\in \FF$ such that $\phi = \lambda\psi$, and, hence, $\phi$ is homogeneous.
	Let us denote its degree by $g$.
	Then both $\alpha$ and $(\alpha\Star)\inv$ must be homogeneous of degree $g$.
	Hence, $\alpha: \mc V^{[g]}\to\mc W $ is an isomorphism of graded $\D$-modules, so we conclude that $g \Xi(\gamma) = \Xi(\gamma')$.
	Considered as a map $\mc V\Star \rightarrow \mc W\Star$, $(\alpha\Star)\inv$ would have degree $g\inv$, so taking into account the shifts, it has degree $g\inv g_0\inv g_0'$, which must be equal to $g$, so $g_0' = g^2 g_0$.

	$(\Leftarrow)$:
	We may suppose $\D=\D'$. Since $g\Xi(\gamma) = \Xi(\gamma')$, we have an isomorphism of graded $\D$-modules $\alpha: \mc V^{[g]} \rightarrow \mc W$.
	As a map from $\mc V$ to $\mc W$, $\alpha$ is homogeneous of degree $g$, hence $(\alpha\Star)\inv: \mc V\Star \rightarrow \mc W\Star$ has degree $g\inv$.
	It follows that, as a map from $(\mc V\Star)^{[g_0]}$ to $(\mc W\Star)^{[g_0']}$, $(\mc \alpha\Star)\inv$ has degree $g\inv g_0\inv g_0' = g$.
	The desired automorphism of $P(n)$ that sends $\Gamma$ to $\Gamma'$ is the conjugation by the matrix 
	$\psi=\left( \begin{matrix}
	\alpha&0\\
	0&(\alpha\Star)\inv\\
	\end{matrix}\right).$
\end{proof}

\subsection{Fine gradings up to equivalence}

For every integer $\ell\ge 0$, we set $T_{(\ell)}=\ZZ_2^{2\ell}$ and fix a nondegenerate alternating bicharacter $\beta_{(\ell)}$, say,
\[
\beta_{(\ell)} (x,y)=(-1)^{\sum_{i=1}^{2\ell} x_i y_{2\ell-i+1}}.
\]

\begin{defi}\label{def:fine-grd-Pn}
For every $\ell$ such that $2^\ell$ is a divisor of $n+1$, put $k:=\frac{n+1}{2^\ell}$ and $\tilde{G}_{(\ell)}=T_{(\ell)}\times \ZZ^k$. 
Let $\{e_1,\ldots,e_k\}$ be the standard basis of $\ZZ^k$ and let $\langle e_0\rangle$ be the infinite cyclic group generated by a new symbol $e_0$.
We define $\Gamma_P(\ell,k)$ to be the $\tilde{G}_{(\ell)}\times\langle e_0 \rangle$-grading $\Gamma_P (T_{(\ell)},\beta_{(\ell)},(e_1, \ldots, e_k), e_0)$ on $P(n)$.
If $n$ is clear from the context, we will simply write $\Gamma_P(\ell)$.
\end{defi}

The subgroup of $\tilde{G}_{(\ell)} \times \langle e_0 \rangle$ generated by the support of $\Gamma_P(\ell,k)$ is 
\[
G_{(\ell)} := (T_{(\ell)}\times \ZZ^k_0)\oplus \langle 2e_1 - e_0 \rangle \iso \ZZ_2^{2\ell}\times \ZZ^k.
\]

\begin{prop}\label{prop:P-fine}
	The gradings $\Gamma_P(\ell)$ on $P(n)$ are fine and pairwise nonequivalent.
\end{prop}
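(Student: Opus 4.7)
The proof has two parts: fineness and pairwise non-equivalence.

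\emph{Fineness.} By Corollary~\ref{cor:automorphisms-Pn}, every refinement $\Gamma'$ of $\Gamma_P(\ell)$ extends uniquely to an even refinement $\widetilde{\Gamma'}$ of the extension $\widetilde\Gamma := \Gamma(T_{(\ell)},\beta_{(\ell)},(e_1,\ldots,e_k),(e_0-e_1,\ldots,e_0-e_k))$ of $\Gamma_P(\ell)$ on $M(n+1,n+1)$ (the quasitorus in $\Aut(P(n))$ corresponding to $\Gamma'$ contains the one for $\Gamma_P(\ell)$, and the same inclusion holds after pushing forward through $\Aut(P(n))\hookrightarrow\Aut(M(n+1,n+1))$). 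I plan to show that $\widetilde{\Gamma'}=\widetilde\Gamma$, which forces $\Gamma'=\Gamma_P(\ell)$. The grading $\widetilde\Gamma$ is a coarsening of the fine grading $\Gamma(\ZZ_2^\ell,k,k)$ on $M(n+1,n+1)$ (Definition~\ref{def:even-fine-grd-on-Mmn}) via the group homomorphism $\epsilon_i\mapsto e_i$ for $1\le i\le k$ and $\epsilon_{k+i}\mapsto e_0-e_i$; this map identifies three families of pairs of $1$-dimensional components of $\Gamma(\ZZ_2^\ell,k,k)$ into $2$-dimensional components of $\widetilde\Gamma$, namely upper-left $(i,j)$ with lower-right $(j,i)$, upper-right $(i,j)$ with upper-right $(j,i)$, and lower-left $(i,j)$ with lower-left $(j,i)$; the identity component (the full $2k$-dimensional diagonal) is unchanged.

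The key observation is that $P(n)$ intersects each such merged $2$-dimensional component in a $1$-dimensional subspace spanned by a specific combination of the two unmerged pieces, determined by the defining relations $b=b\transp$, $c=-c\transp$, and lower-right $=-(\text{upper-left})\transp$ (the sign twist of Remark~\ref{rmk:2-grp-transp}, appropriate for the elementary $2$-group $T_{(\ell)}$, enters here). Consequently, if any proper refinement of $\widetilde\Gamma$ were to unmerge even one such pair, the $1$-dimensional intersection of $P(n)$ with that pair would fail to lie in either refined piece, and $P(n)$ would cease to be a graded subspace. A short cocycle argument based on block multiplication shows that merged pairs cannot be split in isolation (splitting one forces splitting all of the same family) and that the identity component cannot be split without altering off-diagonal degrees. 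Hence the only refinement of $\widetilde\Gamma$ restricting to $P(n)$ is $\widetilde\Gamma$ itself, proving $\Gamma_P(\ell)$ is fine.

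\emph{Pairwise non-equivalence.} I plan to use the dimension of the identity component as an invariant. The identity component of $\Gamma_P(\ell)$ lies entirely in $P(n)^0\cong\Sl(n+1)$, because the identity degree is unreachable from $P(n)^{\pm 1}$ (whose support has a nonzero $e_0$-coordinate in $\ZZ^k\oplus\langle e_0\rangle$), and it equals the image under $\iota$ of the traceless diagonal matrices, of dimension $k-1=(n+1)/2^\ell-1$. Any equivalence of fine gradings induces an isomorphism of their universal groups carrying identity to identity, so this dimension is an equivalence invariant; since $(n+1)/2^\ell-1$ is strictly decreasing in $\ell$, we conclude $\Gamma_P(\ell)\not\sim\Gamma_P(\ell')$ whenever $\ell\ne\ell'$.

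The main obstacle will be the careful verification of the fineness claim --- in particular, the ``diagonal combination'' description of $P(n)\cap(\text{merged component})$ and the cocycle argument that prevents partial unmerging. The non-equivalence then follows from a direct invariant computation.
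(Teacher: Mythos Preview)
Your non-equivalence argument is correct and close in spirit to the paper's: you use the dimension of the identity component, while the paper observes that any automorphism of $P(n)$ preserves the canonical $\mathbb{Z}$-grading (Claim~1 of Proposition~\ref{prop:Aut-Pn}), so an equivalence of gradings restricts to an equivalence of the inner gradings on $P(n)^0\cong\mathfrak{sl}(n+1)$, and these are known to be pairwise non-equivalent. Both work.

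Your fineness argument, however, takes a harder route and leaves a genuine gap. You lift a putative refinement $\Gamma'$ to $\widetilde{\Gamma'}$ on $M(n+1,n+1)$ and then try to analyse all refinements of $\widetilde\Gamma$ that keep $P(n)$ graded. Two problems:
\begin{itemize}
\item You implicitly assume every refinement of $\widetilde\Gamma$ lies between $\widetilde\Gamma$ and the specific fine grading $\Gamma(\mathbb{Z}_2^\ell,k,k)$, i.e.\ that refining can only ``unmerge'' the pairs you listed. This is not justified: a refinement could in principle enlarge $T$ or redistribute the $2k$-dimensional components of degree $t\in T_{(\ell)}$ in ways not visible as pair-splitting.
\item The ``short cocycle argument'' that merged pairs cannot be split in isolation, and that the large diagonal components cannot be split without altering off-diagonal degrees, is asserted rather than proved. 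This is exactly the nontrivial structural content, and it is not short.
\end{itemize}

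The paper avoids all of this by staying on $P(n)$ and exploiting the $\mathbb{Z}$-decomposition directly. Since every automorphism of $P(n)$ preserves the components $P(n)^{-1},P(n)^0,P(n)^1$ and restricts to an inner automorphism of $P(n)^0\cong\mathfrak{sl}(n+1)$, any refinement $\Delta$ of $\Gamma_P(\ell)$ decomposes as $\Delta^{-1}\oplus\Delta^0\oplus\Delta^1$, with $\Delta^0$ an inner refinement of $\Gamma^0$. But $\Gamma^0$ is already the restriction of a fine grading on $M_{n+1}(\mathbb{F})$ (\cite[Proposition~2.35]{livromicha}), hence admits no proper inner refinement; so $\Delta^0=\Gamma^0$. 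Since the supports of $\Gamma^{-1},\Gamma^0,\Gamma^1$ are disjoint (they project to $-e_0,0,e_0$), and $P(n)^{\pm1}$ are simple $P(n)^0$-modules, Lemma~\ref{lemma:simplebimodule} forces $\Delta^{\pm1}$ to be shifts of $\Gamma^{\pm1}$, hence equal to them. This is a two-line argument once you invoke the right tools; I recommend replacing your lifted-to-$M(n+1,n+1)$ analysis with it.
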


\begin{proof}
	We can write $\Gamma_P (\ell) = \Gamma^{-1} \oplus \Gamma^0 \oplus \Gamma^1$ where $\Gamma^i$ is the restriction of $\Gamma_P(\ell)$ to 
	the $i$-th component of the canonical $\ZZ$-grading of $P(n)$.
	We identify $ P(n)^0 = P(n)\even$ with $ \Sl(n+1)$ via de map $\iota$ defined in Subsection \ref{subseq:Pn}. 
	Then the grading $\Gamma^0$ on $P(n)^0$ is the restriction to $\Sl(n+1)$ of a fine grading on $M_{n+1}(\FF)$ with universal group $T_{(\ell)}\times\ZZ^k_0$ (\cite[Proposition 2.35]{livromicha}), so it has no proper refinements among the inner gradings on $\Sl(n+1)$.
	Also, $\Gamma_P(\ell)$ and $\Gamma_P(\ell')$ are nonequivalent if $\ell\ne\ell'$, because their restrictions to $P(n)^0$ are nonequivalent.

	Note that the supports of $\Gamma^{-1}$, $\Gamma^0$ and $\Gamma^1$ are pairwise disjoint since they project to, respectively, $-e_0$, $0$, and $e_0$ 
	in the direct summand $\langle e_0 \rangle $ of $\tilde{G}_{(\ell)}\times\langle e_0\rangle$.
	Suppose that the grading $\Gamma_P (\ell)$ admits a refinement $\Delta = \Delta\inv \oplus \Delta^0 \oplus \Delta^1$.
	Then $\Delta^0$ is an inner grading that is a refinement of $\Gamma^0$, hence they are the same grading (up to relabeling).
	Using Lemma \ref{lemma:simplebimodule}, we conclude that $\Gamma$ and $\Delta$ are the same grading, proving that $\Gamma$ is fine.
\end{proof}

\begin{thm}\label{thm:class-fine-Pn}
	Every fine grading on $P(n)$ is equivalent to a unique $\Gamma_P(\ell)$ as in Definition \ref{def:fine-grd-Pn}.
	Moreover, every grading $\Gamma_P(\ell)$ is fine, and $G_{(\ell)}$ is its universal group.
\end{thm}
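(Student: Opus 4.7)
The plan is to apply Lemma \ref{lemma:universal-grp} to the family $\mathcal{F} = \{\Gamma_P(\ell) \mid 2^\ell \mid n+1\}$. By Proposition \ref{prop:P-fine} its members are fine and pairwise nonequivalent, so both assertions of the theorem will follow at once from the lemma, provided we verify its main hypothesis: for every $H$-grading $\Gamma$ on $P(n)$ there exist $\ell$ and a homomorphism $\alpha \colon G_{(\ell)} \to H$ with $\Gamma \iso {}^\alpha \Gamma_P(\ell)$.

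Fix such a $\Gamma$. By Theorem \ref{thm:Pn-iso}, $\Gamma \iso \Gamma_P(T,\beta,\gamma,g_0)$ for a finite elementary $2$-subgroup $T \subseteq H$ carrying a nondegenerate alternating bicharacter $\beta$, a $k$-tuple $\gamma = (g_1,\ldots,g_k)$ of elements of $H$, and $g_0 \in H$, with $k\sqrt{|T|}=n+1$. Since every nondegenerate alternating bicharacter on an elementary abelian $2$-group of order $2^{2\ell}$ is equivalent to $\beta_{(\ell)}$, there is a unique $\ell$ with $|T|=2^{2\ell}$ and an isomorphism $\alpha_0 \colon T_{(\ell)} \to T$ such that $\beta \circ (\alpha_0 \times \alpha_0) = \beta_{(\ell)}$. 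Then $k = (n+1)/2^\ell$, so $\Gamma_P(\ell) = \Gamma_P(\ell,k)$ is a valid member of $\mathcal{F}$.

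Next I would extend $\alpha_0$ to $\alpha \colon G_{(\ell)} \to H$. Writing $G_{(\ell)} = (T_{(\ell)} \times \ZZ^k_0) \oplus \langle 2e_1 - e_0 \rangle$, a free set of generators for the $\ZZ^k$-part is $\{e_2-e_1, \ldots, e_k-e_1, \, 2e_1-e_0\}$. Define $\alpha$ to agree with $\alpha_0$ on $T_{(\ell)}$ and to send $e_i - e_1 \mapsto g_i g_1^{-1}$ and $2e_1 - e_0 \mapsto g_1^2 g_0^{-1}$. Using the four block-wise degree formulas of Definition \ref{def:grd-Pn}, a direct check then identifies ${}^\alpha \Gamma_P(\ell)$ with $\Gamma_P(T,\beta,\gamma,g_0)$: for instance, on the upper-right block one has
\[
\alpha(e_i + t + e_j - e_0) = (g_i g_1^{-1})(g_j g_1^{-1})(g_1^2 g_0^{-1})\alpha_0(t) = g_i \alpha_0(t) g_j g_0^{-1},
\]
which is exactly the prescribed degree, and the remaining three blocks are handled analogously. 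Thus ${}^\alpha \Gamma_P(\ell) \iso \Gamma$, and the hypothesis of Lemma \ref{lemma:universal-grp} is verified.

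The only real work in this plan is the last verification: one must carry out the block-by-block check that the prescribed $\alpha$ produces exactly $\Gamma_P(T,\beta,\gamma,g_0)$ as a coarsening of $\Gamma_P(\ell)$. The potentially delicate point is the role of the extra generator $2e_1 - e_0$ built into $G_{(\ell)}$ to account for $g_0$; it ensures simultaneously that the support of $\Gamma_P(\ell)$ lies in $G_{(\ell)}$ (not merely in the larger ambient $\tilde G_{(\ell)} \times \langle e_0 \rangle$) and that the universal group of $\Gamma_P(\ell)$ is $G_{(\ell)} \iso \ZZ_2^{2\ell}\times\ZZ^k$ rather than something smaller. Once this is in place, Lemma \ref{lemma:universal-grp} delivers both conclusions of the theorem in one stroke.
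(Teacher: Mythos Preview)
Your proposal is correct and follows essentially the same route as the paper's proof: use Theorem~\ref{thm:Pn-iso} to write an arbitrary grading as $\Gamma_P(T,\beta,\gamma,g_0)$, choose $\ell$ and an isomorphism $T_{(\ell)}\to T$ matching the bicharacters, extend to a homomorphism $G_{(\ell)}\to H$ so that ${}^\alpha\Gamma_P(\ell)\iso\Gamma$, and then invoke Proposition~\ref{prop:P-fine} and Lemma~\ref{lemma:universal-grp}. The only difference is cosmetic: the paper defines $\alpha$ on the ambient group $\tilde G_{(\ell)}\times\langle e_0\rangle$ by $e_i\mapsto g_i$, $e_0\mapsto g_0$ and then restricts, whereas you work directly with a free basis of $G_{(\ell)}$; your block-by-block verification is simply an unpacking of the paper's phrase ``by construction''.
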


\begin{proof}
	Let $\Gamma=\Gamma_P(G,T,\beta,\gamma,g_0)$ be any $G$-grading on $P(n)$.
	Since $T$ is an elementary $2$-group of even rank, we have an isomorphism $\alpha:T_{(\ell)}\to T$, for some $\ell$, such that $\beta_{(\ell)}=\beta\circ(\alpha\times\alpha)$. We can extend $\alpha$ to a homomorphism $G_{(\ell)}\rightarrow G$ (also denoted by $\alpha$) by sending the elements $e_1,\ldots,e_k$ to the entries of $\gamma$, and $e_0$ to $g_0$. 
	By construction, ${}^\alpha\Gamma_P {(\ell)}\iso\Gamma$.
	It remains to apply Proposition \ref{prop:P-fine} and Lemma \ref{lemma:universal-grp}. 
\end{proof}

\section*{Acknowledgments}
The first two authors were Ph.D. students at Memorial University of Newfoundland while working on this paper. Helen Samara Dos Santos would like to thank her co-supervisor, Yuri Bahturin, for help and guidance during her Ph.D. program. All authors are grateful to Yuri Bahturin and Alberto Elduque for useful discussions.


 \newcommand{\noop}[1]{} \def\cprime{$'$}
\providecommand{\bysame}{\leavevmode\hbox to3em{\hrulefill}\thinspace}
\providecommand{\MR}{\relax\ifhmode\unskip\space\fi MR }
\providecommand{\MRhref}[2]{%
  \href{http://www.ams.org/mathscinet-getitem?mr=#1}{#2}
}
\providecommand{\href}[2]{#2}

\end{document}